\theoremstyle{plain}
\newtheorem{theorem}{Theorem}[section]
\newtheorem{prop}[theorem]{Proposition}
\newtheorem{lemma}[theorem]{Lemma}
\newtheorem{cor}[theorem]{Corollary}
\newtheorem{theoremintro}{Theorem}
\theoremstyle{definition}
\newtheorem{defin}[theorem]{Definition}
\newtheorem{example}[theorem]{Example}
\theoremstyle{remark}
\newtheorem{remark}[theorem]{Remark}
\newcommand{\sheaf}[1]{\mathscr{#1}}
\newcommand{\LL}{\sheaf{L}}
\newcommand{\OO}{\sheaf{O}}
\newcommand{\MM}{\sheaf{M}}
\newcommand{\EE}{\sheaf{E}}
\newcommand{\FF}{\sheaf{F}}
\newcommand{\NN}{\sheaf{N}}
\newcommand{\VV}{\sheaf{V}}
\renewcommand{\AA}{\sheaf{A}}
\newcommand{\BB}{\sheaf{B}}
\newcommand{\PP}{\sheaf{P}}
\newcommand{\DD}{\sheaf{D}}
\newcommand{\Norm}{N}
\newcommand{\residue}{\partial}
\newcommand{\divisor}{\mathrm{div}}
\newcommand{\Brtwo}{{}_2\mathrm{Br}}
\newcommand{\mm}{\mathfrak{m}}
\newcommand{\QF}{\mathrm{QF}}
\newcommand{\rad}{\mathrm{rad}}
\newcommand{\CliffB}{\sheaf{C}}
\newcommand{\qf}[1]{<\!#1\!>}
\newcommand{\Az}{\mathrm{Az}}
\newcommand{\Quadrics}{\mathrm{Quad}}
\DeclareMathOperator{\HHom}{\sheaf{H}\!\mathit{om}}
\DeclareMathOperator{\Hom}{\mathrm{Hom}}
\DeclareMathOperator{\EEnd}{\sheaf{E}\!\mathit{nd}}
\newcommand{\Group}[1]{\mathbf{#1}}
\newcommand{\GL}{\Group{GL}}
\newcommand{\SL}{\Group{SL}}
\newcommand{\PGL}{\Group{PGL}}
\newcommand{\Orth}{\Group{O}}
\newcommand{\SOrth}{\Group{SO}}
\newcommand{\GOrth}{\Group{GO}}
\newcommand{\GSOrth}{\Group{GSO}}
\newcommand{\PGOrth}{\Group{PGO}}
\newcommand{\PGSOrth}{\Group{PGSO}}
\newcommand{\PSOrth}{\Group{PSO}}
\newcommand{\CliffAlg}{\mathscr{C}}
\newcommand{\CliffZ}{\mathscr{Z}}
\newcommand{\isom}{\cong}
\newcommand{\Z}{\mathbb Z}
\newcommand{\A}{\mathbb A}
\newcommand{\C}{\mathbb C}
\renewcommand{\P}{\mathbb P}
\newcommand{\Q}{\mathbb Q}
\DeclareMathOperator{\Aut}{\mathrm{Aut}}
\DeclareMathOperator{\Br}{\mathrm{Br}}
\DeclareMathOperator{\Pic}{\mathrm{Pic}}
\DeclareMathOperator{\rk}{\mathrm{rk}}
\DeclareMathOperator{\Spec}{\mathrm{Spec}}
\DeclareMathOperator{\Ext}{\mathrm{Ext}}
\newcommand{\muu}{\bm{\mu}}
\newcommand{\inv}{^{-1}}
\newcommand{\mult}{^{\times}}
\newcommand{\dual}{^{\vee}}
\newcommand{\exterior}{{\textstyle \bigwedge}}
\newcommand{\tensor}{\otimes}
\newcommand{\vp}{\varphi}
\newcommand{\sig}{\sigma}
\newcommand{\bslash}{\smallsetminus}
\newcommand{\pullback}{^{*}}
\newcommand{\pushforward}{_{*}}
\newcommand{\transpose}{^{{t}}}
\newcommand{\id}{\mathrm{id}}
\newcommand{\mapto}[1]{\xrightarrow{#1}}
\newcommand{\ol}[1]{\overline{#1}}
\newcommand{\wt}[1]{\widetilde{#1}}
\newcommand{\subsetto}{\hookrightarrow}
\newcommand{\et}{\mathrm{\acute{e}t}}
\newcommand{\Dynkin}[1]{\mathsf{#1}}
\newcommand{\linedef}[1]{\textsl{#1}}
\newcommand{\Lie}[1]{\mathfrak{#1}}
\newcommand{\ep}{\epsilon}
\renewcommand{\SOrth}{\Orth^+}
\renewcommand{\GSOrth}{\GOrth^+}
\renewcommand{\PGSOrth}{\PGOrth^+}
\newcommand{\EOrth}{\Group{E}\Orth}
\newcommand{\Gm}{\Group{G}_{\text{m}}}
\newcommand{\Ga}{\Group{G}_{\text{a}}}
\newcommand{\obar}[1]{\mkern 1.5mu\overline{\mkern-1.5mu#1\mkern-1.5mu}\mkern 1.5mu}
\newcommand{\wh}[1]{\widehat{#1}}
\newcommand{\PProj}{\mathbf{Proj}\,}
\newcommand{\graded}{^{\bullet}}
\newcommand{\fl}{\mathrm{fppf}}
\newcommand{\similar}{\simeq}
\newcommand{\Het}{H_{\et}}
\newcommand{\Div}{\mathrm{Div}}
\newcommand{\Cl}{\mathrm{Cl}}
\begin{document}

\title[Quadric surface bundles over surfaces]{Quadric surface bundles over surfaces}

\author[Auel]{Asher Auel}
\address{Department of Mathematics \& Computer Science \\ %
Emory University \\ %
400 Dowman Drive~NE \\ %
Atlanta, GA 30322, USA}
\email{auel@mathcs.emory.edu, parimala@mathcs.emory.edu, suresh@mathcs.emory.edu}

\author[Parimala]{R.\ Parimala}

\author[Suresh]{V.\ Suresh}

\date{\today}


\begin{abstract}
Let $f : T \to S$ be a finite flat morphism of degree 2 of regular
integral schemes of dimension $\leq 2$ (with 2 invertible), having
regular branch divisor $D \subset S$.  We establish a bijection
between Azumaya quaternion algebras on $T$ and quadric surface bundles
with simple degeneration along $D$.  This is a manifestation of the
exceptional isomorphism ${}^2\Dynkin{A}_1=\Dynkin{D}_2$ degenerating
to the exceptional isomorphism $\Dynkin{A}_1=\Dynkin{B}_1$.  In one
direction, the even Clifford algebra yields the map.  In the other
direction, we show that the classical algebra norm functor can be
uniquely extended over the discriminant divisor.  Along the way, we
study the orthogonal group schemes, which are smooth yet nonreductive,
of quadratic forms with simple degeneration.  Finally, we provide two
applications: constructing counter-examples to the local-global
principle for isotropy, with respect to discrete valuations, of
quadratic forms over surfaces; and a new proof of the global Torelli
theorem for very general cubic fourfolds containing a plane.
\end{abstract}

\maketitle

\section*{Introduction}

A quadric surface bundle $\pi : Q \to S$ over a scheme $S$ is the flat
fibration in quadrics associated to a line bundle-valued quadratic
form $q : \EE \to \LL$ of rank 4 over $S$.  A natural class of quadric
surface bundles over $\P^2$ appearing in algebraic geometry arise from
cubic fourfolds $Y \subset \P^5$ containing a plane.  Projection from
the plane $\pi : \tilde{Y} \to \P^2$ defines a quadric surface bundle,
where $\tilde{Y}$ is the blow-up of $Y$ along the plane.  Such quadric
bundles have degeneration along a sextic curve $D \subset \P^2$.  If
$Y$ is sufficiently general then $D$ is smooth and the double cover $T
\to \P^2$ branched along $D$ is a K3 surface of degree 2.  
Over the surface $T$, the even Clifford algebra $\CliffB_0$
associated to $\pi$ becomes an Azumaya quaternion algebra representing
a Brauer class $\beta \in \Brtwo(T)$.
For $T$ sufficiently general, the association $Y \mapsto (T,\beta)$ is
injective:\ smooth cubic fourfolds $Y$ and $Y'$ giving rise to
isomorphic data $(T,\beta)\isom (T',\beta')$ are linearly isomorphic.
This result was originally obtained via Hodge theory by
Voisin~\cite{voisin:cubic_fourfolds} in her celebrated proof of the
global Torelli theorem for cubic fourfolds.

In this work, we provide an algebraic generalization of this result to
any regular integral scheme $T$ of dimension $\leq 2$ with 2
invertible, which is a finite flat double cover of a regular scheme
$S$ with regular branch divisor $D \subset S$.  Let $\Quadrics_2(T/S)$
denote the set of $S$-isomorphism classes of quadric surface bundles
over $S$ with simple degeneration 
along $D$ and discriminant cover $T\to S$ (see \S\ref{sec:simple} for
details).  Let $\Az_2(T/S)$ denote the set of $\OO_T$-isomorphism
classes of Azumaya algebras of degree $2$ over $T$ with generically
trivial corestriction to $S$ (see \S\ref{sec:cores} for details).  The
even Clifford algebra functor yields a map $\CliffAlg_0 :
\Quadrics_2(T/S) \to \Az_2(T/S)$.  In \S\ref{sec:cores}, we define a
generalization $N_{T/S}$ of the algebra norm functor, which gives a
map in the reverse direction.  Our main result is the following.

\begin{theoremintro}
\label{thm:main}
Let $S$ be a regular integral scheme of dimension $\leq 2$ with 2
invertible and $T \to S$ a finite flat morphism of degree 2 with
regular branch divisor $D \subset S$.  Then the even Clifford algebra
and norm functors
$$
\xymatrix@C60pt{
\Quadrics_2(T/S)
\ar@<1mm>[r]^(.48){\CliffAlg_0} &
\ar@<1mm>[l]^(.52){\Norm_{T/S}} 
\Az_2(T/S)
}
$$
give rise to mutually inverse bijections.
\end{theoremintro}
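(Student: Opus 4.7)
The plan is to verify the asserted bijection locally in codimension at most one and then globalize by reflexive extension, exploiting the hypotheses that $S$ has dimension at most two and that $S$, $T$, and $D$ are regular. An Azumaya algebra on $T$ and a line bundle-valued quadratic form with simple degeneration on $S$ are each reflexive coherent sheaves (with extra structure), so each is determined by its restriction to any open subscheme whose complement has codimension at least two. It therefore suffices to show that $\CliffAlg_0$ and $\Norm_{T/S}$ are mutually inverse at the generic point of $S$ and at every generic point of a component of $D$, and to check that both functors commute with restriction to opens.

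First I would handle the étale locus $U = S \smallsetminus D$. There the double cover $T|_U \to U$ is étale and every bundle in $\Quadrics_2(T/S)$ restricts to a nondegenerate rank-$4$ line bundle-valued quadratic form over $U$ with discriminant $T|_U$. This is the classical rank-$4$ manifestation of the exceptional isomorphism ${}^2\Dynkin{A}_1 = \Dynkin{D}_2$: the even Clifford algebra is an Azumaya quaternion algebra over $T|_U$ with Brauer-trivial corestriction, and the classical algebra norm inverts the construction. I would invoke the standard results of Knus--Parimala--Sridharan in this smooth setting to conclude that the restrictions of $\CliffAlg_0$ and $\Norm_{T/S}$ to $U$ are mutually inverse bijections.

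The heart of the argument is the local computation at a generic point $\eta$ of a component of $D$. Passing to the henselization of $S$ at $\eta$, we may assume $S = \Spec A$ for a discrete valuation ring $A$, $T = \Spec B$ for a ramified quadratic extension $B$ (a DVR, since $T$ is regular), and $D$ cut out by a uniformizer of $A$. I would then use the explicit diagonal normal form of a quadratic form with simple degeneration together with the description of its even Clifford algebra from the preceding sections to check directly that $\Norm_{T/S} \circ \CliffAlg_0$ and $\CliffAlg_0 \circ \Norm_{T/S}$ are the identity on local isomorphism classes. The exceptional isomorphism at work on the singular fiber is $\Dynkin{A}_1 = \Dynkin{B}_1$, arising as the degeneration of $\Dynkin{D}_2$, and the rank-$3$ fiber over $D$ corresponds to the Azumaya quaternion algebra restricted to the ramification point.

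The main obstacle I anticipate is precisely the extension of the classical algebra norm across the ramified point. One must show that the reflexive hull on $S$ of the generic norm of an Azumaya quaternion algebra with Brauer-trivial corestriction to $S$ carries the structure of a rank-$4$ line bundle-valued quadratic form with simple degeneration along $D$, and that this extension is uniquely determined. This is where the study of the nonreductive orthogonal group scheme of a form with simple degeneration, developed earlier in the paper, becomes indispensable: the local obstructions to extending the quadratic structure live in the cohomology of these group schemes, and the smoothness established there is what makes the extension both exist and be unique. Once the bijection is established locally at all codimension-one points, the reflexive-extension argument of the first paragraph globalizes it to the asserted pair of mutually inverse bijections.
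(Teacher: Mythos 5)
Your high-level architecture is correct and matches the paper's: one works over the \'etale locus $U = S \smallsetminus D$ using the classical ${}^2\Dynkin{A}_1 = \Dynkin{D}_2$ correspondence, one works at the generic points of $D$ where the $\Dynkin{A}_1 = \Dynkin{B}_1$ degeneration lives, and one globalizes using the dimension-$\leq 2$ hypothesis. But several of the steps you defer are not routine, and the paper's execution diverges from what you sketch in ways that matter.

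First, ``reflexive extension'' is not the mechanism that globalizes the local data. It is true that the underlying $\OO_S$- and $\OO_T$-modules extend reflexively across codimension-$\geq 2$ loci, but the Azumaya-algebra structure and the quadratic-form-with-simple-degeneration structure do not follow automatically from reflexivity of the modules; one needs the purity theorems of Colliot-Th\'el\`ene--Sansuc for torsors under $\PGL_n$ and for quadratic forms (applied in the paper as \cite[Thm.~6.13 and Prop.~2.3]{colliot-thelene_sansuc:fibres_quadratiques}), together with a genuine gluing-of-tensors formalism over the union of $U$ with the semilocal ring at the generic points of $D$ (the content of \S\ref{sec:gluing}). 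Reflexivity gives you a sheaf; purity gives you the tensor structure; gluing lets you match the local data at the generic fiber. Your plan elides these into one move.

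Second, and more critically, having local bijections over $U$ and over each $\OO_{S,D'}$ does not by itself yield a global bijection: the two local descriptions must be \emph{compatible} at the generic point of $S$, and arranging this is where most of the work actually goes. For surjectivity, after producing a form over $U$ and one over $\OO_{S,D}$ with the right even Clifford algebras, the paper must show that the comparison similarity over $K$ can be chosen to glue (the proof of Theorem~\ref{thm:existence}). For injectivity, if $\CliffB_0(q_1) \isom \CliffB_0(q_2)$, one gets a similarity over $U$ and a similarity over $\OO_{S,D}$ (Proposition~\ref{prop:rank_4_sim}); the key observation is that their discrepancy at $K$ must be a homothety, because $\CliffB_0$ of a similarity determines it up to scalars (by Theorem~\ref{thm:isom}), and the homothety is then absorbed into a twist by a line bundle $\PP$ so that the two similarities genuinely glue. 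Your sketch doesn't address this compatibility at all, and without it the argument does not close.

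Third, the local statement at the generic points of $D$ is itself substantial, and passage to the henselization is the wrong tool. The paper proves Proposition~\ref{prop:rank_4_sim} over the \emph{semilocal} ring at the generic points of $D$, and the mechanism is an Eichler-transformation patching argument (Proposition~\ref{prop:factorization}) combined with cancellation and representation results over complete DVRs (Lemmas~\ref{lem:complete_rep}, \ref{lem:transitive}, Corollaries~\ref{cor:cancel_unit}, \ref{cor:injectivity}, \ref{prop:injectivity_sim}). You pass to the henselization and say you would ``check directly'' using the diagonal normal form, but that only gives you the result over the henselization, not over the original DVR; descending requires exactly the $\Orth(q\perp h)(\widehat{K}) = \Orth(q\perp h)(K)\cdot\Orth(q\perp h)(\widehat{R})$ type factorization that Proposition~\ref{prop:factorization} provides. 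The smoothness of the nonreductive orthogonal group (Proposition~\ref{prop:smooth}) is indeed relevant, but not in the cohomological, obstruction-theoretic way you gesture at; it enters through the fibral isomorphism criterion used to prove Theorem~\ref{thm:isom}, which is then what makes the homothety argument in the injectivity proof work.

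In short: your outline captures the right geography of the proof, but the three things you treat as routine---extending tensor structures across codimension two, making the codimension-one local data glue, and proving the local statement over the ramified DVR rather than its henselization---are precisely the technical content of \S\ref{sec:preliminary_results}, \S\ref{sec:gluing}, and \S\ref{sec:cores}, and none of them follows from reflexivity or from an explicit normal-form check.
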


\pagebreak

This result can be viewed as a significant generalization of the
exceptional isomorphism ${}^2\Dynkin{A}_1=\Dynkin{D}_2$ correspondence
over fields and rings (cf.\ \cite[IV.15.B]{book_of_involutions} and
\cite[\S10]{knus_parimala_sridharan:rank_6_via_pfaffians}) to the
setting of line bundle-valued quadratic forms with simple degeneration
over schemes.  Most of our work goes toward establishing fundamental
local results concerning quadratic forms with simple degeneration (see
\S\ref{sec:preliminary_results}) and the structure of their orthogonal
group schemes, which are nonreductive (see
\S\ref{sec:Orthogonal_groups}).  In particular, we prove that these
group schemes are smooth (see Proposition~\ref{prop:smooth}) and
realize a degeneration of exceptional isomorphisms
${}^2\Dynkin{A}_1=\Dynkin{D}_2$ to $\Dynkin{A}_1=\Dynkin{B}_1$.  We
also establish some structural results concerning quadric surface
bundles over schemes (see \S\ref{sec:simple}) and the formalism of
gluing tensors over surfaces (see \S\ref{sec:gluing}).

Also, we give two surprisingly different applications of our
results. In \S\ref{sec:failure}, we provide a class of quadratic forms
that are counter-examples to the local-global principle for isotropy,
with respect to discrete valuations, over function fields of surfaces
over algebraically closed fields.  Moreover, such forms exist even
over rational function fields, where regular quadratic forms fail to
provide such counter-examples.  In \S\ref{sec:hodge}, using tools from
the theory of moduli of twisted sheaves, we are able to provide a new
proof of the global Torelli theorem for very general cubic fourfolds
containing a plane, a result originally obtained by
Voisin~\cite{voisin:cubic_fourfolds} using Hodge theory.

Our perspective comes from the algebraic theory of quadratic forms.
We employ the even Clifford algebra of a line bundle-valued quadratic
form constructed by Bichsel~\cite{bichsel:thesis}.
Bichsel--Knus~\cite{bichsel_knus:values_line_bundles}, Caenepeel--van
Oystaeyen~\cite{caenepeel_van_oystaeyen} and
Parimala--Sridharan~\cite[{\S
4}]{parimala_sridharan:norms_and_pfaffians} give alternate
constructions, which are all detailed in \cite[\S1.8]{auel:clifford}.
In a similar vein, Kapranov~\cite[\S4.1]{kapranov:derived} (with
further developments by Kuznetsov~\cite[\S3]{kuznetsov:quadrics})
considered the \emph{homogeneous} Clifford algebra of a quadratic
form---the same as the \emph{generalized} Clifford algebra of
\cite{bichsel_knus:values_line_bundles} or the \emph{graded} Clifford
algebra of \cite{caenepeel_van_oystaeyen}---to study the derived
category of projective quadrics and quadric bundles.  We focus on the
even Clifford algebra as a sheaf of algebras, ignoring its geometric
manifestation via the relative Hilbert scheme of lines in the
quadric bundle, as in \cite[\S1]{voisin:cubic_fourfolds} and
\cite[\S5]{hassett_varilly_varilly}.  In this context, we refer to
Hassett--Tschinkel~\cite[\S3]{hassett_tschinkel:spaces_of_sections}
for a version of our result in the case of smooth projective curves
over an algebraically closed field.

Finally, our work on degenerate quadratic forms may also be of
independent interest.  There has been much recent focus on
classification of degenerate (quadratic) forms from various number
theoretic directions.  An approach to Bhargava's \cite{bhargava:ICM}
seminal construction of moduli spaces of ``rings of low rank'' over
arbitrary base schemes is developed by Wood~\cite{wood:binary} where
line bundle-valued degenerate forms (of higher degree) are crucial
ingredients.  In related developments, building on the work of
Delone--Faddeev~\cite{delone_faddeev} over $\Z$ and
Gross--Lucianovic~\cite{gross_lucianovic} over local rings, Venkata
Balaji~\cite{balaji_ternary}, and independently
Voight~\cite{voight:characterizing_quaternion}, used Clifford algebras
of degenerate ternary quadratic forms to classify degenerations of
quaternion algebras over arbitrary bases.  In this context, our main
result can be viewed as a classification of quaternary quadratic forms
with squarefree discriminant in terms of their even Clifford algebras.

\bigskip

{\small\noindent{\bf Acknowledgements.}  %
The first author benefited greatly from a visit at ETH Z\"urich and is
partially supported by National Science Foundation grant MSPRF
DMS-0903039. The second author is partially supported by National
Science Foundation grant DMS-1001872. The authors would specifically
like to thank M.\ Bernardara, J.-L.\ Colliot-Th\'el\`ene, B.\ Conrad,
M.-A.\ Knus, E.\ Macr{\`\i}, and M.\ Ojanguren for many helpful
discussions.}

\section{Reflections on simple degeneration}
\label{sec:simple}

Let $S$ be a noetherian separated integral scheme.  A \linedef{(line
bundle-valued) quadratic form} on $S$ is a triple $(\EE,q,\LL)$, where
$\EE$ is a locally free $\OO_S$-module and $q : \EE \to \LL$ is a
quadratic morphism of sheaves such that the associated morphism of
sheaves $b_q : S^2\EE \to \LL$, defined on sections by $ b_q(v,w) =
q(v+w) - q(v) - q(w), $ is an $\OO_S$-module morphism.  Equivalently,
a quadratic form is an $\OO_S$-module morphism $q : S_2 \EE \to \LL$,
see \cite[Lemma 2.1]{swan:quadric_hypersurfaces} or \cite[Lemma
1.1]{auel:clifford}.  Here, $S^2\EE$ and $S_2\EE$ denote the second
symmetric power and the submodule of symmetric second tensors of
$\EE$, respectively.  There is a canonical isomorphism $S^2(\EE\dual)
\tensor \LL \isom \HHom(S_2\EE,\LL)$.  A line bundle-valued quadratic
form then corresponds to a global section
$$
q \in 
\Gamma(S,\HHom(S_2\EE,\LL)) \isom
\Gamma(S,S^2(\EE\dual)\tensor \LL) \isom
\Gamma_S(\P(\EE),\OO_{\P(\EE)/S}(2)\tensor p\pullback \LL),
$$
where $p : \P(\EE) = \PProj S\graded(\EE\dual) \to S$ and $\Gamma_S$
denotes sections over $S$.  There is a canonical $\OO_S$-module
morphism $\psi_{q} : \EE \to \HHom(\EE,\LL)$ associated to $b_{q}$. A
line bundle-valued quadratic form $(\EE,q,\LL)$ is \linedef{regular}
if $\psi_{q}$ is an $\OO_{S}$-module isomorphism.  Otherwise, the
\linedef{radical} $\rad(\EE,q,\LL)$ is the sheaf kernel of $\psi_q$,
which is a torsion-free subsheaf of $\EE$.  We will mostly dispense
with the adjective ``line bundle-valued.''  We define the
\linedef{rank} of a quadratic form to be the rank of the underlying
module.

A \linedef{similarity} $(\vp,\lambda_{\vp}) : (\EE,q,\LL) \to
(\EE',q',\LL')$ consists of $\OO_S$-module isomorphisms $\vp : \EE \to
\EE'$ and $\lambda_{\vp} : \LL \to \LL'$ such that $q'(\vp(v)) =
\lambda_{\vp} \circ q(v)$ on sections.  A similarity
$(\vp,\lambda_{\vp})$ is an \linedef{isometry} if $\LL=\LL'$ and
$\lambda_{\vp}$ is the identity map.  We write $\similar$ for
similarities and $\isom$ for isometries.  Denote by
$\GOrth(\EE,q,\LL)$ and $\Orth(\EE,q,\LL)$ the presheaves, on the fppf
site $S_{\fl}$, of similitudes and isometries of a quadratic form
$(\EE,q,\LL)$, respectively.  These are sheaves and are representable
by affine group schemes of finite presentation over $S$, indeed closed
subgroupschemes of $\GL(\EE)$.  The similarity factor defines a
homomorphism $\lambda : \GOrth(\EE,q,\LL) \to \Gm$ with kernel
$\Orth(\EE,q,\LL)$.  If $(\EE,q,\LL)$ has even rank $n=2m$, then there
is a homomorphism $\det/\lambda^m : \GOrth(\EE,b,\LL) \to \muu_2$,
whose kernel is denoted by $\GSOrth(\EE,q,\LL)$ (this definition of
$\GSOrth$ assumes 2 is invertible on $S$; in general it is defined as
the kerenel of the Dickson invariant).  The similarity factor $\lambda
: \GSOrth(\EE,q,\LL) \to \Gm$ has kernel denoted by
$\SOrth(\EE,q,\LL)$.  Denote by $\PGOrth(\EE,q,\LL)$ the sheaf
cokernel of the central subgroupschemes $\Gm \to \GOrth(\EE,q,\LL)$ of
homotheties; similarly denote $\PGSOrth(\EE,q,\LL)$.  At every point
where $(\EE,q,\LL)$ is regular, these group schemes are smooth and
reductive (see \cite[II.1.2.6,~III.5.2.3]{demazure_gabriel}) though
not necessarily connected.  In \S\ref{sec:Orthogonal_groups}, we will
study their structure over points where the form is not regular.

The \linedef{quadric bundle} $\pi : Q \to S$ associated to a nonzero
quadratic form $(\EE,q,\LL)$ of rank $n\geq 2$ is the restriction of $p
: \P(\EE) \to S$ via the closed embedding $j : Q \to \P(\EE)$ defined
by the vanishing of the global section $q \in
\Gamma_S(\P(\EE),\OO_{\P(\EE)/S}(2)\tensor p\pullback \LL)$.  Write
$\OO_{Q/S}(1) = j\pullback \OO_{\P(\EE)/S}(1)$.
We say that $(\EE,q,\LL)$ is \linedef{primitive} if $q_x \neq 0$ at
every point $x$ of $S$, i.e., if $q : \EE \to \LL$ is an epimorphism.
If $q$ is primitive then $Q \to \P(\EE)$ has relative codimension 1
over $S$ and $\pi : Q \to S$ is flat of relative dimension $n-2$, cf.\
\cite[8~Thm.~22.6]{matsumura:commutative_ring_theory}.  We say that
$(\EE,q,\LL)$ is \linedef{generically regular} if $q$ is regular over
the generic point of $S$.

Define the \linedef{projective similarity} class of a quadratic form
$(\EE,q,\LL)$ to be the set of similarity classes of quadratic forms
$(\NN \tensor \EE,\id_{\NN^{\tensor 2}}\tensor q,\NN^{\tensor
2}\tensor \LL)$ ranging over all line bundles $\NN$ on $S$.
Equivalently, this is the set of isometry classes $(\NN\tensor\EE,\phi
\circ (\id_{\NN^{\tensor 2}} \tensor q),\LL')$ ranging over all
isomorphisms $\phi : \NN^{\tensor 2}\tensor\LL \to \LL'$ of line
bundles on $S$.  This is referred to as a \linedef{lax-similarity}
class in \cite{balmer_calmes:lax}.  The main result of this section
shows that projectively similar quadratic forms yield isomorphic
quadric bundles, while the converse holds under further hypotheses.

Let $\eta$ be the generic point of $S$ and $\pi : Q \to S$ a quadric
bundle.  Restriction to the generic fiber of $\pi$ gives rise to a
complex
\begin{equation}
\label{eq:minimal}
0 \to \Pic(S) \mapto{\pi\pullback} \Pic(Q) \to \Pic(Q_{\eta}) \to 0
\end{equation}
whose exactness we will study in Proposition~\ref{prop:exactness} below. 

\begin{prop}
\label{prop:proj_sim_quadric}
Let $\pi : Q \to S$ and $\pi' : Q' \to S$ be quadric bundles
associated to quadratic forms $(\EE,q,\LL)$ and $(\EE',q',\LL')$.  If
$(\EE,q,\LL)$ and $(\EE',q',\LL')$ are in the same projective
similarity class then $Q$ and $Q'$ are $S$-isomorphic.  The converse
holds if $q$ is assumed to be generically regular and
\eqref{eq:minimal} is assumed to be exact in the middle.
\end{prop}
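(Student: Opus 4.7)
The plan is to handle the two implications separately. For the forward direction, if $(\EE',q',\LL') \similar (\NN\tensor\EE, \id\tensor q, \NN^{\tensor 2}\tensor\LL)$ for some $\NN \in \Pic(S)$, the canonical $S$-isomorphism $\P(\NN\tensor\EE) \isom \P(\EE)$ carries $\OO_{\P(\NN\tensor\EE)/S}(1)$ to $\OO_{\P(\EE)/S}(1)\tensor p\pullback\NN\inv$, so the twist by $\NN^{\tensor 2}$ in the values of $\id\tensor q$ cancels, and the vanishing loci of $q$ and $\id\tensor q$ agree as closed subschemes of $\P(\EE)$. Hence $Q \isom Q'$ over $S$.

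For the converse, given an $S$-isomorphism $f:Q\to Q'$, I would proceed in three steps. First, I would show that $f\pullback\OO_{Q'/S}(1) \isom \OO_{Q/S}(1)\tensor\pi\pullback\NN$ for some $\NN\in\Pic(S)$. On the generic fiber $Q_\eta$, which is a smooth quadric by generic regularity, the hyperplane class $\OO(1)$ is canonical in $\Pic(Q_\eta)$: for $\dim Q_\eta\geq 3$, the Lefschetz hyperplane theorem gives $\Pic(Q_\eta)=\Z\cdot\OO(1)$; for $\dim Q_\eta=2$ it is the symmetric sum $e_1+e_2$ of the two rulings, invariant under every automorphism including the ruling swap; for $\dim Q_\eta\leq 1$ the claim is elementary. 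Hence $f_\eta\pullback\OO_{Q'_\eta}(1)\isom\OO_{Q_\eta}(1)$ in $\Pic(Q_\eta)$, and the assumed middle exactness of \eqref{eq:minimal} then supplies the desired $\NN$.

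Next, I would match $\EE$ and $\EE'$ via pushforward. Twisting the structure sequence $0\to\OO_{\P(\EE)}(-2)\tensor p\pullback\LL\inv\to\OO_{\P(\EE)}\to j\pushforward\OO_Q\to 0$ by $\OO(1)$ and applying $p\pushforward$, combined with the vanishing $R^0 p\pushforward\OO(-1) = R^1 p\pushforward\OO(-1) = 0$, yields $\pi\pushforward\OO_{Q/S}(1)\isom\EE\dual$. Together with the previous step and the projection formula, this gives $\EE'\dual\isom\EE\dual\tensor\NN$, so $\EE'\isom\EE\tensor\MM$ for $\MM=\NN\inv$. By the forward direction, $(\EE\tensor\MM,\id\tensor q,\MM^{\tensor 2}\tensor\LL)$ defines a quadric bundle $S$-isomorphic to $Q$, so after replacing $(\EE,q,\LL)$ by this projectively similar datum I may assume $\EE\isom\EE'$ and $f\pullback\OO_{Q'/S}(1)\isom\OO_{Q/S}(1)$.

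Finally, I would extract the similarity. Pushforward of $f$ under these identifications yields a canonical iso $\EE\dual\isom\EE'\dual$ that extends $f$ to a linear iso $\bar\sigma:\P(\EE)\isom\P(\EE')$ over $S$ restricting to $f$ on $Q$. Since $\bar\sigma$ takes $Q$ to $Q'$, the pulled-back form $\bar\sigma\pullback q'$ vanishes exactly on $Q$, hence generates the same principal ideal sheaf inside $\OO_{\P(\EE)}$ as $q$; this forces $\LL\isom\LL'$ and $\bar\sigma\pullback q'=\lambda\cdot q$ for a unit $\lambda\in\Gm(S)$, which is the required similarity. The hard part will be the first step, where the $\Pic$-theoretic argument on $Q_\eta$ must handle the ruling swap and the various Galois structures on $\Pic(Q_\eta)$ when $\dim Q_\eta=2$, the central case for the paper's applications.
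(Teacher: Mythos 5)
Your proposal is correct and follows essentially the same route as the paper's proof: the forward direction via the canonical identification of $\P(\NN\tensor\EE)$ with $\P(\EE)$ twisting $\OO(1)$, and the converse via (i) identifying $f\pullback\OO_{Q'/S}(1)$ with $\OO_{Q/S}(1)\tensor\pi\pullback\NN$ using the generic fiber together with exactness of \eqref{eq:minimal}, (ii) recovering $\EE\dual\isom\pi\pushforward\OO_{Q/S}(1)$ from the twisted structure sequence, and (iii) checking that the resulting bundle isomorphism preserves the forms. The only differences are cosmetic—you elaborate the citation of \cite[Lemma~69.2]{elman_karpenko_merkurjev} with the $(-K)/2$ ruling-swap argument, and you spell out the final verification that the paper leaves as ``one is left to check''—and those elaborations are sound.
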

\begin{proof}
Let $(\EE,q,\LL)$ and $(\EE',q',\LL')$ be projectively similar with
respect to an invertible $\OO_S$-module $\NN$ and $\OO_S$-module
isomorphisms $\vp : \NN \tensor \EE \to \EE'$ and $\lambda :
\NN^{\tensor 2} \tensor \LL \to \LL'$ preserving quadratic forms.  Let
$p : \P(\EE) \to S$ and $p' : \P(\EE') \to S$ be the associated
projective bundles and $h : \P(\EE') \to \P(\NN\tensor \EE)$ the
$S$-isomorphism associated to to $\vp\dual$.  There is a natural
$S$-isomorphism $g : \P(\NN \tensor \EE) \to \P(\EE)$ satisfying
$g\pullback \OO_{\P(\EE)/S}(1) \isom \OO_{\P(\EE\tensor
\NN)/S}(1)\tensor p'{}\pullback \NN$, see \cite[II Lemma
7.9]{hartshorne:algebraic_geometry}.  Denote by $f = g \circ h :
\P(\EE') \to \P(\EE)$ the composition.  Then we have an equality of
global sections $f\pullback q = q'$ in
$$
\Gamma_S\bigl( \P(\EE'),f\pullback(\OO_{\P(\EE)/S}(2) \tensor
p\pullback \LL)\bigr) 
\isom
\Gamma_S\bigl( \P(\EE'),\OO_{\P(\EE')/S}(2)\tensor p'{}\pullback \LL'\bigr)
$$ 
and so $f$ induces a $S$-isomorphism $Q' \to Q$ upon restriction.

Now suppose that $q$ is generically regular and that
\eqref{eq:minimal} is exact in the middle.  Let $f : Q \to Q'$ be an
$S$-isomorphism.  First, we have a canonical $\OO_S$-module
isomorphism $\EE\dual \isom \pi\pushforward \OO_{Q/S}(1)$.  Indeed,
considering the long exact sequence associated to $p\pushforward$
applied to the short exact sequence
$$
0 \to \OO_{\P(\EE)/S}(-1) \tensor p\pullback \LL\dual \mapto{q}
\OO_{\P(\EE)/S}(1) \to j\pushforward \OO_{Q/S}(1) \to 0
$$
and noting that $R^ip\pushforward\OO_{\P(\EE)/S}(-1)=0$ for $i=0,1$,
we have 
$$
\EE\dual = p\pushforward\OO_{\P(\EE)/S}(1) \isom p\pushforward
j\pushforward \OO_{Q/S}(1) = \pi\pushforward\OO_{Q/S}(1).
$$
Next, we claim that $f\pullback \OO_{Q'/S}(1) \isom \OO_{Q/S}(1)
\tensor \pi\pullback \NN$ for some line bundle $\NN$ on $S$.  Indeed,
over the generic fiber, we have $f\pullback \OO_{Q'/S}(1)_{\eta} =
f_\eta\pullback \OO_{Q'_\eta}(1) \isom \OO_{Q_\eta}(1)$ by the case of
smooth quadrics (since $q$ is generically regular) over fields, cf.\
\cite[Lemma~69.2]{elman_karpenko_merkurjev}.  The exactness of
\eqref{eq:minimal} in the middle finishes the proof of the claim.

Then, note that we have the following chain of $\OO_S$-module
isomorphisms
$$
\EE{}\dual \isom \pi\pushforward \OO_{Q/S}(1) \isom 
\pi\pushforward' f\pushforward (f\pullback\OO_{Q'/S}(1)\tensor\pi\pullback \NN\dual) \isom
\pi\pushforward' \OO_{Q'/S}(1) \tensor \pi\pushforward \pi\pullback \NN\dual
\isom \EE'{}\dual\tensor \NN\dual
$$
(again we need $n \geq 1$). Finally, one is left to check that the
induced dual $\OO_S$-module isomorphism $\NN \tensor \EE' \to \EE$
preserves the quadratic forms.
\end{proof}

\begin{defin}
The determinant $\det \psi_q : \det\EE \to
\det\EE\dual\tensor\LL^{\tensor n}$ gives rise to a global section of
$(\det\EE\dual)^{\tensor 2}\tensor\LL^{\tensor n}$, whose divisor of
zeros is called the \linedef{discriminant divisor} $D$.  The reduced
subscheme associated to $D$ is precisely the locus of points where the
radical of $q$ is nontrivial.  If $q$ is generically regular, then $D
\subset S$ is closed of codimension one.
\end{defin}

\begin{defin}
We say that a quadratic form $(\EE,q,\LL)$ has \linedef{simple
degeneration} if
$$
\rk_{\kappa(x)} \rad(\EE_x,q_x,\LL_x) \leq 1
$$ 
for every closed point $x$ of $S$, where $\kappa(x)$ is the residue
field of $\OO_{S,x}$.
\end{defin}

Our first lemma concerns the local structure of 
simple degeneration.

\begin{lemma}
\label{lem:simple_local}
Let $(\EE,q)$ be a quadratic form with simple degeneration over the
spectrum of a local ring $R$ with 2 invertible.  Then $(\EE,q) \isom
(\EE_1,q_1) \perp (R,\qf{\pi})$ where $(\EE_1,q_1)$ is regular and
$\pi \in R$.
\end{lemma}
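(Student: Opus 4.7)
The plan is to reduce to an explicit matrix computation over $R$ and to complete the square.

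Since $R$ is local, $\EE$ is free of some rank $n$, and since $\LL$ may be trivialized we present $q$ by a symmetric Gram matrix $A \in M_n(R)$ (the factor $\tfrac{1}{2}$ linking $q$ and $b_q$ is available because $2\in R\mult$). Let $k$ denote the residue field and $\bar A$ the reduction of $A$ modulo the maximal ideal $\mm$. The simple degeneration hypothesis is $\rk_k \bar A \geq n-1$.

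Since $\bar A$ is symmetric and $2$ is a unit in $k$, I would choose a $k$-basis of $\EE\tensor k$ in which $\bar A$ takes block-diagonal form $\mathrm{diag}(\bar A_1, \bar c)$ with $\bar A_1 \in \GL_{n-1}(k)$ symmetric and $\bar c \in k$ (necessarily zero when the radical is nontrivial, a unit otherwise). Lifting this basis arbitrarily to a basis of $\EE$, the Gram matrix of $q$ acquires the shape
$$
A = \begin{pmatrix} A_1 & B \\ B^t & c \end{pmatrix},
$$
where $A_1$ is a symmetric lift of $\bar A_1$ (hence invertible over $R$ by Nakayama), $B \in \mm\cdot R^{n-1}$, and $c \in R$ lifts $\bar c$. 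The last step is to complete the square: conjugating by
$P = \begin{pmatrix} I & -A_1^{-1} B \\ 0 & 1 \end{pmatrix} \in \GL_n(R)$ gives
$$
P^t A P = \begin{pmatrix} A_1 & 0 \\ 0 & \pi \end{pmatrix}, \qquad \pi = c - B^t A_1^{-1} B \in R,
$$
which realizes the asserted orthogonal decomposition $(\EE,q) \isom (\EE_1, q_1) \perp (R, \qf{\pi})$, with $(\EE_1,q_1)$ the regular quadratic form with Gram matrix $A_1$.

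No step poses a serious obstacle: the argument is entirely formal once $2\in R\mult$ is available, which both legitimizes the symmetric matrix presentation of $q$ and underwrites completion of the square. The one small point worth highlighting is that the construction above handles uniformly the case of trivial radical (where $\bar c$ is a unit, so $\pi$ is a unit and $q$ itself is regular) and the case of one-dimensional radical (where $\bar c = 0$ and the resulting $\pi$ lies in $\mm$), the latter being the substantive content of the lemma.
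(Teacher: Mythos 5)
Your proof is correct and follows essentially the same approach as the paper: the paper invokes a general lifting result from Baeza (\emph{Quadratic forms over semilocal rings}, Cor.~3.4) to obtain the regular corank-one summand, whereas you reprove that lifting step by hand — diagonalizing $\bar{A}$ over the residue field, lifting the basis, and completing the square — which is a clean and self-contained substitute for the citation.
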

\begin{proof}
Over the residue field $k$, the form $(\EE,q)$ has a regular
subform $(\ol{\EE}_1,\ol{q}_1)$ of corank one, which can be lifted to a
regular orthogonal direct summand $(\EE_1,q_1)$ of corank 1 of
$(\EE,q)$, cf.\ \cite[Cor.~3.4]{baeza:semilocal_rings}.  This gives
the required decomposition.
Moreover, we can lift a diagonalization
$\ol{q}_1\isom\qf{\ol{u}_1,\dotsc,\ol{u}_{n-1}}$ with $\ol{u}_i \in
k\mult$, to a diagonalization 
$$
q \isom \qf{u_1,\dotsc,u_{n-1},\pi},
$$
with $u_i \in R\mult$ and $\pi \in R$.
\end{proof}

Let $D \subset S$ be a regular divisor.  Since $S$ is normal, the
local ring $\OO_{S,D'}$ at the generic point of a component $D'$ of
$D$ is a discrete valuation ring.  When 2 is invertible on $S$,
Lemma~\ref{lem:simple_local} shows that a quadratic form $(\EE,q,\LL)$
with simple degeneration along $D$ can be diagonalized over
$\OO_{S,D'}$ as
$$
q \isom \qf{ u_1, \dotsc, u_{r-1}, u_r \pi^e }
$$
where $u_i$ are units and $\pi$ is a parameter of $\OO_{S,D'}$.  We
call $e \geq 1$ the \linedef{multiplicity} of the simple degeneration
along $D'$.  If $e$ is even for every component of $D$, then there is
a birational morphism $g : S' \to S$ such that the pullback of
$(\EE,q,\LL)$ to $S'$ is regular.  We will focus on quadratic forms
with simple degeneration of multiplicity one along (all components of)
$D$.

We can give a geometric interpretation of having simple degenerate.

\begin{prop}
\label{prop:isolated}
Let $\pi : Q \to S$ be the quadric bundle associated to a generically
regular quadratic form $(\EE,q,\LL)$ over $S$ and $D \subset S$ its
discriminant divisor.  Then:
\begin{enumerate}
\item $q$ has simple degeneration if and only if the fiber $Q_x$ of
its associated quadric bundle has at worst isolated singularities for
each closed point $x$ of $S$;

\item if 2 is invertible on $S$ and $D$ is reduced, then any simple
degeneration along $D$ has multiplicity one;

\item if 2 is invertible on $S$ and $D$ is regular, then any
degeneration along $D$ is simple of multiplicity one;

\item if $S$ is regular and $q$ has simple degeneration, then $D$ is
regular if and only if $Q$ is regular.
\end{enumerate}
\end{prop}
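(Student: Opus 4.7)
The plan is to verify each assertion locally at a closed point $x \in S$, using the normal form of Lemma~\ref{lem:simple_local} together with the Jacobian criterion for the quadric bundle $Q$ viewed as a hypersurface in the regular ambient $\P(\EE)$.

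For \textit{a)}, I would work fiberwise: over $\kappa(x)$, the singular locus of the projective quadric $Q_x \subset \P(\EE_x)$ coincides with the projectivization of the radical $\rad(\EE_x,q_x,\LL_x)$. Thus $Q_x$ has at worst isolated singularities (empty or zero-dimensional singular locus) if and only if $\rk_{\kappa(x)}\rad(\EE_x,q_x,\LL_x) \leq 1$, which is precisely simple degeneration at $x$.

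For \textit{b)}, I would apply Lemma~\ref{lem:simple_local} at the DVR $\OO_{S,D'}$ for each component $D'$ of $D$, yielding $q \isom \qf{u_1,\dotsc,u_{n-1},u_n\pi^e}$ with $u_i$ units and $\pi$ a parameter. Then the discriminant is $u_1 \cdots u_n \pi^e$, a unit times $\pi^e$, so $D$ is locally cut out as a divisor by $\pi^e$; reducedness forces $e=1$. For \textit{c)}, set $R=\OO_{S,x}$ and let $r$ be the rank of $\rad(q_x)$. Lifting a regular subform of corank $r$ as in the proof of Lemma~\ref{lem:simple_local} gives $q \isom q_1\perp q_2$ with $q_1$ regular of rank $n-r$ and $q_2$ of rank $r$ having all matrix entries in $\mm_x$. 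Then $\det(q_1)$ is a unit while $\det(q_2) \in \mm_x^r$ by multilinearity, so the local equation of $D$ at $x$ lies in $\mm_x^r$; regularity of $D$ at $x$ demands a local equation in $\mm_x \smallsetminus \mm_x^2$, forcing $r \leq 1$ (simple degeneration) and, when $r=1$, the lone entry of $q_2$ to be a regular parameter (multiplicity one).

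For \textit{d)}, part \textit{a)} ensures that $\pi : Q \to S$ has smooth fibers away from a locally finite set of points $P_x$, one per closed point $x \in D$; since $S$ is regular, $Q$ is regular away from $\{P_x\}$. At $P_x = (0{:}\dotsb{:}0{:}1)$, using the normal form $q \isom \qf{u_1,\dotsc,u_{n-1},u_n\pi^e}$, the local equation of $Q$ in the regular ambient ring $R[y_1,\dotsc,y_{n-1}]_{(\mm_x,y_1,\dotsc,y_{n-1})}$, with $y_i = x_i/x_n$, is $u_1 y_1^2 + \dotsb + u_{n-1}y_{n-1}^2 + u_n \pi^e$, which lies in the square of the maximal ideal if and only if $e\geq 2$. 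Hence $Q$ is regular at $P_x$ iff $e=1$ iff $D$ is regular at $x$. I expect the main subtlety to lie in part \textit{d)}: one must confirm via part \textit{a)} that the singular points of the degenerate fibers are the \emph{only} possible failures of regularity of $Q$, and then perform the Jacobian computation uniformly in the parameters of $R$ beyond $\pi$.
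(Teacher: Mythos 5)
Your argument is correct. For (a) and (b) you follow the paper's route: the singular locus of a projective quadric over a field (with $2$ invertible) is the projectivization of the radical, so isolated singularities are equivalent to the radical having rank $\leq 1$, and the local equation of $D$ at a generic point of a component is $\pi^e$ up to units, so $D$ reduced forces $e=1$. For (c) and (d) the paper defers entirely to references (Colliot-Th\'el\`ene--Skorobogatov, Hassett--V\'arilly-Alvarado--Varilly, Auel--Bernardara--Bolognesi), whereas you supply direct local computations. Your (c) argument --- lifting a regular complement of the radical to split $q\cong q_1\perp q_2$ over $\OO_{S,x}$ with $q_1$ regular and $q_2$ of rank $r$ having all Gram entries in $\mm_x$, so that $\det q_2\in\mm_x^r$, and then regularity of $D$ at $x$ forcing $r\leq 1$ --- is essentially the same computation the paper carries out later in the proof of Proposition~\ref{prop:extend}, and your (d) argument by the Jacobian criterion at the unique singular point $P_x$ of the fibre, combined with the observation that smoothness of $\pi$ away from these points plus regularity of $S$ handles all other points, is a fine rendition of the cited fact. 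The one thing to tighten in (d) is notational: the exponent $e$ only makes sense at a codimension-one point, so at a general closed point $x\in D$ you should write $q\cong\langle u_1,\dotsc,u_{n-1},\pi\rangle$ with $\pi\in\mm_x$ a local equation for $D$, and then both regularity of $D$ at $x$ and regularity of $Q$ at $P_x$ reduce to the single condition $\pi\notin\mm_x^2$.
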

\begin{proof}
The first claim follows from the classical geometry of quadrics over a
field:\ the quadric of a nondegenerate form is smooth while the
quadric of a form with nontrivial radical has isolated singularity if
and only if the radical has rank one.  As for the second claim, the
multiplicity of the simple degeneration is exactly the
scheme-theoretic multiplicity of the divisor $D$.  For the third
claim, see \cite[\S3]{colliot_skorobogatov:quadriques},
\cite[Rem.~7.1]{hassett_varilly_varilly}, or
\cite[Rem.~2.6]{auel_bernardara_bolognesi:quadrics}.  The final claim
is standard, cf.\ \cite[Lemma~5.2]{hassett_varilly_varilly}.
\end{proof}

We do not need the full flexibility of the following general result,
but we include it for completeness.

\begin{prop}
\label{prop:exactness}
Let $\pi : Q \to S$ be a flat morphism of noetherian integral
separated normal schemes and $\eta$ be the generic point of $S$.  Then
the complex \eqref{eq:minimal} is:
\begin{enumerate}
\item exact at right if $Q$ is locally factorial;
\item exact in the middle if $S$ is locally factorial;
\item exact at left if $\pi : Q \to S$ is proper with geometrically
integral fibers.
\end{enumerate}
\end{prop}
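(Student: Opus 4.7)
The plan is to tackle the three assertions in sequence, using Picard--class group identifications in parts (a) and (b) and the projection formula in part (c).

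For part (a), I would exploit local factoriality of $Q$ to identify $\Pic(Q) = \Cl(Q)$; similarly $\Pic(Q_\eta) = \Cl(Q_\eta)$ since $Q_\eta$ is a localization of $Q$ and hence also locally factorial. Surjectivity of $\Cl(Q) \to \Cl(Q_\eta)$ would then follow from the classical extension of prime divisors from an open subscheme: each prime divisor of $Q_\eta$ is the trace of its closure in $Q$, which is itself a prime divisor by flatness and the dimension formula. This extends $\Z$-linearly to divisor classes, proving exactness on the right.

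For part (b), I would start with $L \in \Pic(Q)$ having trivial restriction to $Q_\eta$, and argue via noetherian limits that $L|_{\pi^{-1}(U)}$ is trivial for some dense open $U\subset S$. Extending the trivializing section to a rational section $s$ of $L$ on $Q$ expresses $L \isom \OO_Q(\divisor(s))$ with $\divisor(s)$ Cartier and supported on $\pi^{-1}(S\setminus U)$. Flatness forces the prime components of $\divisor(s)$ to map onto codimension one points $\xi_1,\dotsc,\xi_k$ of $S$, whose closures $W_i$ are Cartier by local factoriality of $S$. The crucial step is to show $\divisor(s) \sim \sum_i a_i\,\pi\pullback W_i$ for integers $a_i$: with $\pi_i$ a uniformizer of $\OO_{S,\xi_i}$, the pullback $\pi\pullback \pi_i$ is regular on $Q$ by flatness and provides a local defining equation for $\pi\pullback W_i$. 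A Cartier divisor whose support lies locally inside $V(\pi\pullback \pi_i)$ must, up to principal divisors, be an integer multiple of $\pi\pullback W_i$ in a neighborhood of $\xi_i$; patching these multiples globally gives the desired equivalence, whence $L \isom \pi\pullback \OO_S(\sum_i a_i W_i)$.

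For part (c), I would establish $\pi\pushforward \OO_Q = \OO_S$ and then invoke the projection formula. Geometric integrality of the fibers gives $H^0(Q_s,\OO_{Q_s}) = \kappa(s)$ for every $s\in S$, so cohomology and base change for proper flat $\pi$ makes $\pi\pushforward \OO_Q$ locally free of rank one, with the adjunction $\OO_S \to \pi\pushforward\OO_Q$ a fibrewise isomorphism; Nakayama then promotes this to an isomorphism on stalks. Given $M \in \Pic(S)$ with $\pi\pullback M \isom \OO_Q$, the projection formula yields
$$
M \isom M \tensor \pi\pushforward \OO_Q \isom \pi\pushforward \pi\pullback M \isom \pi\pushforward \OO_Q \isom \OO_S,
$$
proving $\pi\pullback$ injective on $\Pic(S)$.

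The main obstacle I anticipate is the local-to-global step in part (b): when $\pi^{-1}(W_i)$ has several irreducible components, the multiplicities of $\divisor(s)$ along these components must reconcile into a single integer pullback from $S$. Local factoriality of $S$, combined with the Cartier condition on $\divisor(s)$, is precisely what forces this reconciliation to go through.
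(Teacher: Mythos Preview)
Your arguments for (a) and (c) are sound. For (a) you proceed essentially as the paper does: local factoriality of $Q$ identifies $\Pic$ with $\Cl$, and surjectivity of $\Cl(Q)\to\Cl(Q_\eta)$ follows by taking closures of prime divisors. For (c) you take a genuinely different route. The paper works at the level of divisors: if $\pi^*\LL$ is trivial in $\Cl(Q)$, write $[\pi^*\LL]=\divisor_Q(f)$; since $\divisor_{Q_\eta}(f)=0$ one has $f\in\Gamma(Q_\eta,\OO_{Q_\eta}^\times)$, and properness together with geometric integrality of the generic fiber gives $\Gamma(Q_\eta,\OO_{Q_\eta}^\times)=K_S^\times$, so $f\in K_S^\times$ and $[\LL]=\divisor_S(f)$ is principal. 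Your sheaf-theoretic argument via $\pi_*\OO_Q\cong\OO_S$ and the projection formula is clean and conceptual; the paper's argument has the slight advantage that only the \emph{generic} fiber is used, whereas cohomology-and-base-change as you invoke it requires the fiberwise hypothesis everywhere.

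Part (b) has a genuine gap, precisely at the point you flag as the main obstacle. The claimed reconciliation of multiplicities fails when $\pi^{-1}(W_i)$ is reducible, and neither local factoriality of $S$ nor the Cartier condition on $\divisor(s)$ rescues it. Take $S=\A^1_k$, let $Q$ be the blow-up of $\A^2_k$ at the origin, and let $\pi$ be the blow-down followed by projection to the first coordinate. Then $\pi$ is flat between smooth integral schemes and $S$ is regular, but the exceptional divisor $E$ is Cartier, $\OO_Q(E)$ is trivial on $Q_\eta\cong\A^1_{k(t)}$, yet $\OO_Q(E)$ is not a pullback: $\Pic(S)=0$ while $[E]\neq 0$ in $\Pic(Q)\cong\Z$. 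Here $\pi^*\{0\}=E+Z'$ with $Z'$ the strict transform of the second coordinate axis, and no integer multiple of $E+Z'$ is linearly equivalent to $E$ alone. Your local patching heuristic cannot work because near a smooth point of $E\setminus Z'$ the divisor $E$ is already principal, so there is no local obstruction to distinguish $E$ from a pullback; the obstruction is global. The paper's approach to (b) is organized differently, passing through an asserted exact sequence $\Cl(S)\to\Cl(Q)\to\Cl(Q_\eta)$ of Weil divisor class groups and then a diagram chase with the inclusions $\Pic\hookrightarrow\Cl$; you should compare the two arguments carefully, as the same example is relevant to the exactness claim for that sequence as well.
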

\begin{proof}
First, note that flat pullback and restriction to the generic fiber
give rise to an exact sequence of Weil divisor groups
\begin{equation}
\label{eq:div}
0 \to \Div(S) \mapto{\pi\pullback} \Div(Q) \to \Div(Q_{\eta}) \to 0.
\end{equation}
Indeed, as $\Div(Q_\eta) = \varinjlim \Div(Q_U)$, where the limit is
taken over all dense open sets $U \subset S$ and we write $Q_U = Q
\times_S U$, the exactness at right and center of sequence
\eqref{eq:div} then follows from usual exactness of the excision
sequence
$$
Z^0(\pi\inv(S\bslash U)) \to \Div(Q) \to \Div(Q_U) \to 0
$$
cf.\ \cite[1~Prop.~1.8]{fulton:intersection_theory}.  Finally,
sequence \eqref{eq:div} is exact at left since $\pi$ is surjective on
codimension 1 points.

Since $\pi$ is dominant, the sequence of Weil divisor groups
induces an exact sequence
of Weil divisor class groups, which is the bottom row of the following
commutative diagram
$$
\xymatrix@R=14pt{
\Pic(S) \ar@{^{(}->}[d] \ar[r]^{\pi\pullback} & \Pic(Q) \ar@{^{(}->}[d] \ar[r]
&\Pic(Q_{\eta}) \ar@{^{(}->}[d] \ar[r] & 0\\
\Cl(S) \ar[r]^{\pi\pullback} & \Cl(Q) \ar[r] & \Cl(Q_{\eta}) \ar[r] & 0
}
$$
of abelian groups.
The vertical inclusions become equalities
under a locally factorial hypothesis, cf.\ \cite[Cor.~21.6.10]{EGA4}.
Thus \textit{a)} and \textit{b)} are immediate consequences of diagram
chases.

To prove \textit{c)}, assume $\pi\pullback[\LL] = [\pi\pullback\LL] =
0$ in $\Cl(Q)$ for the class $[\LL] \in \Cl(S)$ of some line bundle
$\LL \in \Pic(Q)$.  Then $[\pi\pullback\LL] = \divisor_Y(f)$ in
$\Div(Q)$ for some $f \in K_Q\mult$.  But as $[\pi\pullback\LL]_\eta =
0$ in $\Div(Q_\eta)$, we have that $\divisor_{Q_\eta}(f)=0$ (since
$K_Q=K_{Q_\eta}$) so that $f \in \Gamma(Q_{\eta},\OO_{Q_\eta}\mult)$,
i.e., $f$ has neither zeros nor poles.  By the
hypothesis on $\pi$, we have that $Q_\eta$ is a proper geometrically
integral $K_S$-scheme, so that
$\Gamma(Q_{\eta},\OO_{Q_\eta}\mult)=K_S$.  In particular, $f \in K_S$
via the inclusion $K_S \subsetto K_Q$.  Hence
$\pi\pullback([\LL]-\divisor_S(f))=0$ in $\Div(Q)$, thus $[\LL] =
\divisor_S(f)$ in $\Div(S)$, and so $\LL$ is trivial in $\Pic(S)$.
\end{proof}

\begin{cor}
\label{cor:simple_minimal}
Let $S$ be a regular integral scheme with 2 invertible and
$(\EE,q,\LL)$ a quadratic form on $S$ of rank $\geq 4$ having at most
simple degeneration along a regular divisor $D \subset S$.  Let $\pi :
Q \to S$ be the 
associated quadric bundle. Then the complex \eqref{eq:minimal} is exact.
\end{cor}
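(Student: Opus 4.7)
The plan is to verify all three conditions \textit{a)}, \textit{b)}, \textit{c)} of Proposition~\ref{prop:exactness}, so that \eqref{eq:minimal} becomes a short exact sequence. The structure of the quadric bundle $\pi : Q \to S$, together with the regularity of $S$, $Q$, and $D$, will supply exactly what is needed.

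For condition \textit{c)} (exactness on the left), I would observe that $\pi : Q \to S$ is projective---hence proper---since $Q$ is cut out by a single equation in the projective bundle $p : \P(\EE) \to S$. The substantive point is geometric integrality of \emph{every} fiber, and this is where the rank $\geq 4$ hypothesis is essential. Over $s \in S \smallsetminus D$ the form $q_s$ is regular of rank $n \geq 4$, so $Q_s$ is a smooth quadric of dimension $\geq 2$, in particular geometrically integral. Over $s \in D$, simple degeneration forces $\rad(\EE_s,q_s,\LL_s)$ to have rank exactly one, so $q_s$ has rank $n-1 \geq 3$; the fiber $Q_s$ is then a projective cone with vertex a single reduced point over a smooth quadric of dimension $\geq 1$, which is geometrically integral. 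I expect this to be the most delicate step, because $r = 3$ is precisely the borderline at which the projective quadric $\{q_s = 0\}$ remains integral (a rank-two degeneration would split into two hyperplanes and break integrality).

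For condition \textit{b)} (exactness in the middle), one only needs $S$ to be locally factorial, which is immediate from the assumption that $S$ is regular, via Auslander--Buchsbaum.

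For condition \textit{a)} (exactness on the right), I would invoke Proposition~\ref{prop:isolated}\,\textit{d)}: the hypotheses that $S$ is regular, $q$ has simple degeneration, and $D$ is regular together imply that the total space $Q$ is regular, hence locally factorial. All three conditions of Proposition~\ref{prop:exactness} are then in force, and exactness of \eqref{eq:minimal} follows.
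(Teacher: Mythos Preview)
Your approach is correct and matches the paper's: verify each hypothesis of Proposition~\ref{prop:exactness}. One point you pass over, however, is the flatness of $\pi$, which is a standing hypothesis of that proposition. Being ``cut out by a single equation in $\P(\EE)$'' is not sufficient on its own---if $q$ vanished identically on some fiber, that fiber would be all of $\P(\EE_s)$ and $\pi$ would fail to be equidimensional. The paper closes this by observing that quadratic forms with simple degeneration are automatically primitive, whence $\pi$ is flat of relative dimension $n-2$ (see the paragraph introducing primitivity in \S\ref{sec:simple}). Once flatness is in hand, and $Q$ is regular by Proposition~\ref{prop:isolated}\textit{d)} (hence normal and, being connected, integral), your verification of \textit{a)}, \textit{b)}, \textit{c)} goes through; your discussion of the fibers is in fact more detailed than the paper's one-line appeal to \cite[I~Ex.~5.12]{hartshorne:algebraic_geometry}.
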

\begin{proof}
First, recall that a quadratic form over a field contains a
nondegenerate subform of rank $\geq 3$ if and only if its associated
quadric is irreducible, cf.\
\cite[I~Ex.~5.12]{hartshorne:algebraic_geometry}.  Hence the fibers of
$\pi$ are geometrically irreducible.  By Proposition
\ref{prop:isolated}, $Q$ is regular.  Quadratic forms with simple
degeneration are primitive, hence $\pi$ is flat.  Thus we can apply
all the parts of
Proposition~\ref{prop:exactness}.
\end{proof}

We will define $\Quadrics_n^D(S)$ to be the set of projective
similarity classes of line bundle-valued quadratic forms of rank $n+2$
on $S$ with simple degeneration of multiplicity one along an effective
Cartier divisor $D$.  An immediate consequence of Propositions
\ref{prop:proj_sim_quadric} and \ref{prop:isolated} and Corollary
\ref{cor:simple_minimal} is the following.

\begin{cor}
For $n \geq 2$ and $D$ reduced, the set $\Quadrics_n^D(S)$ is in
bijection with the set of $S$-isomorphism classes of quadric bundles
of relative dimension $n$ with isolated singularities in the fibers
above $D$.
\end{cor}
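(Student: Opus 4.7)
The plan is to exhibit the bijection as the map sending a projective similarity class $[(\EE,q,\LL)] \in \Quadrics_n^D(S)$ to the $S$-isomorphism class of its associated quadric bundle $\pi : Q \to S$.  This map is well-defined on projective similarity classes by the forward direction of Proposition~\ref{prop:proj_sim_quadric}, and Proposition~\ref{prop:isolated}(a) shows that its image lies in the set of bundles with isolated singularities in fibers above $D$ (and smooth fibers elsewhere).  Surjectivity is essentially tautological: every quadric bundle of relative dimension $n$ arises from some primitive quadratic form of rank $n+2$; isolated singularities in the fibers above $D$ force simple degeneration along $D$ by Proposition~\ref{prop:isolated}(a), and multiplicity one then follows from Proposition~\ref{prop:isolated}(b) since $D$ is reduced and $2$ is invertible.

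For injectivity I would invoke the converse direction of Proposition~\ref{prop:proj_sim_quadric}, which requires two inputs: generic regularity of $q$, immediate because $D$ has codimension one, and exactness in the middle of \eqref{eq:minimal} for $Q$.  When $D$ happens to be regular, Corollary~\ref{cor:simple_minimal} delivers this exactness directly; injectivity then follows at once from Proposition~\ref{prop:proj_sim_quadric}.

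The main obstacle is extending the exactness of \eqref{eq:minimal} to the case where $D$ is only reduced, since the proof of Corollary~\ref{cor:simple_minimal} invokes Proposition~\ref{prop:isolated}(d), which requires $D$ regular to conclude $Q$ is regular.  My plan is to appeal instead to Proposition~\ref{prop:exactness}(b), which needs only that $S$ be locally factorial (given) and $Q$ be normal.  Normality of $Q$ follows from Serre's criterion: $Q$ is a Cartier divisor in the regular ambient scheme $\P(\EE)$ (simple degeneration with $n+2 \geq 4$ makes $q$ primitive), hence a local complete intersection, hence Cohen--Macaulay and in particular satisfies $S_2$; and $R_1$ holds by a codimension count, since the singular locus of $Q$ lies over $D$ and meets each fiber of $\pi$ in at most isolated points, so has dimension at most $\dim D \leq \dim S - 1$, giving codimension at least $n+1 \geq 3$ in $Q$.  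With $Q$ normal, Proposition~\ref{prop:exactness}(b) supplies the required exactness in the middle of \eqref{eq:minimal}, and the converse direction of Proposition~\ref{prop:proj_sim_quadric} then yields injectivity, completing the bijection.
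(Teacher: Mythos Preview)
Your approach is essentially the paper's: the paper simply asserts that the corollary is an immediate consequence of Propositions~\ref{prop:proj_sim_quadric} and~\ref{prop:isolated} together with Corollary~\ref{cor:simple_minimal}, and gives no further argument. You follow the same outline---the map is well-defined and surjective by Propositions~\ref{prop:proj_sim_quadric} and~\ref{prop:isolated}, and injectivity comes from the converse direction of Proposition~\ref{prop:proj_sim_quadric} once the exactness of \eqref{eq:minimal} in the middle is known.

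Where you go beyond the paper is in noticing that Corollary~\ref{cor:simple_minimal} is stated for $D$ \emph{regular}, whereas the present corollary only assumes $D$ reduced. The paper glosses over this mismatch; you do not. Your fix---bypass Corollary~\ref{cor:simple_minimal} and invoke Proposition~\ref{prop:exactness}(b) directly after checking that $Q$ is normal via Serre's criterion---is correct: $Q$ is a Cartier divisor in the regular scheme $\P(\EE)$, hence Cohen--Macaulay, and since $\pi$ is flat with $S$ regular, the singular locus of $Q$ is contained in the union of the singular loci of the fibers, which sits over $D$ with at most one point per fiber, giving codimension $\geq n+1 \geq 3$. (One small addition: Proposition~\ref{prop:exactness} also needs $Q$ integral, but this follows since the fibers are geometrically irreducible and $Q$ is normal.) So your argument genuinely fills a gap in the paper's terse citation and works under the stated hypothesis that $D$ is merely reduced.
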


\begin{defin}
\label{defin:discriminant_cover}
Now let $(\EE,q,\LL)$ be a quadratic form of rank $n$,
$\CliffAlg_0=\CliffAlg_0(\EE,q,\LL)$ its even Clifford algebra (see
\cite{bichsel_knus:values_line_bundles} or
\cite[\S1.8]{auel:clifford}), and $\CliffZ = \CliffZ(\EE,q,\LL)$ its
center.  Then $\CliffAlg_0$ is a locally free $\OO_S$-algebra of rank
$2^{n-1}$, cf.\ \cite[IV.1.6]{knus:quadratic_hermitian_forms}.  If $q$ is generically regular of even rank (we are still
assuming that $S$ is integral and regular) then $\CliffZ$ is a locally
free $\OO_S$-algebra of rank two, see
\cite[IV~Prop.~4.8.3]{knus:quadratic_hermitian_forms}.  The associated
finite flat morphism $f : T \to S$ of degree two is called the
\linedef{discriminant cover}.
\end{defin}

\begin{lemma}[{\cite[App.~B]{auel_bernardara_bolognesi:quadrics}}]
\label{lem:disc_ram}
Let $(\EE,q,\LL)$ be a quadratic form of even rank with simple
degeneration of multiplicity one along $D \subset S$ and $f : T \to S$
its discriminant cover.  Then $f\pullback \OO(D)$ is a square in
$\Pic(T)$ and the branch divisor of $f$ is precisely $D$.
\end{lemma}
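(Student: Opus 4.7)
The plan is to reduce to a local computation and then assemble the pieces globally. By Lemma~\ref{lem:simple_local}, at any closed point $x \in S$ the form admits a diagonalization $q \isom \qf{u_1, \dots, u_{n-1}, u_n \pi}$ with $u_i \in \OO_{S,x}\mult$ and $\pi \in \OO_{S,x}$ either a unit (off $D$) or a local parameter for $D$ (on $D$, since the multiplicity of the simple degeneration is one). A direct Clifford-algebra computation using generators $e_1, \dots, e_n$ shows that $z = e_1 e_2 \cdots e_n$ lies in $\CliffAlg_0$, is central there, satisfies $z^2 = (-1)^m u_1 \cdots u_n \pi = u \pi$ for a unit $u$, and generates $\CliffZ_x$ as an $\OO_{S,x}$-algebra, so $\CliffZ_x \isom \OO_{S,x}[z]/(z^2 - u\pi)$.

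Globally, since $q$ is generically regular, $\CliffZ$ is a locally free rank-$2$ $\OO_S$-algebra. With $2$ invertible, the trace map $\CliffZ \to \OO_S$ is split, and we may write $\CliffZ \isom \OO_S \oplus \NN$ as $\OO_S$-modules for a line bundle $\NN$ (the trace-free part). Multiplication on $\CliffZ$ restricts to an $\OO_S$-linear map $m : \NN^{\tensor 2} \to \OO_S$, and by the local calculation the image of $m$ is the ideal sheaf $\OO_S(-D) \subset \OO_S$ (generated locally by $u\pi$), so that $m$ factors as an isomorphism $\NN^{\tensor 2} \isom \OO_S(-D)$. Both claims now follow from this structure. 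First, the discriminant ideal of the quadratic algebra $\CliffZ/\OO_S$ is $4$ times the image of $m$, hence equals $\OO_S(-D)$ (since $2$ is invertible), and so the branch divisor of $f : T \to S$ is exactly $D$. Second, pulling back along $f$, the counit $f\pullback \CliffZ = f\pullback f\pushforward \OO_T \to \OO_T$ restricts to an injection of line bundles $f\pullback \NN \subsetto \OO_T$ whose image is the ideal sheaf of the ramification divisor $\wt{D}$ (locally, from $f\pullback \pi = u\inv z^2$ one sees that the zero locus of $z$ on $T$ cuts out $\wt{D}$); hence $f\pullback \NN \isom \OO_T(-\wt{D})$. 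Squaring gives
$$
f\pullback \OO_S(-D) \isom (f\pullback \NN)^{\tensor 2} \isom \OO_T(-\wt{D})^{\tensor 2},
$$
so $f\pullback \OO_S(D) \isom \OO_T(\wt{D})^{\tensor 2}$ is a square in $\Pic(T)$.

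The main obstacle is the global identification of the line bundle $\NN$ and of the multiplication $m : \NN^{\tensor 2} \to \OO_S$ with the Cartier ideal $\OO_S(-D)$: the classical formula for the central generator of $\CliffAlg_0$ involves products of Clifford generators, which in the line-bundle-valued setting naturally take values in tensor powers of $\det\EE\dual$ and $\LL$ rather than in $\OO_S$, so a careful bookkeeping of the twists is required to realize the image of $m$ as a genuine Cartier sub-ideal cut out by a local equation of $D$. Once this identification is made, both assertions of the lemma reduce to general facts about separable quadratic $\OO_S$-algebras of the form $\OO_S \oplus \NN$: such an algebra ramifies exactly along the vanishing of its norm $m$, and on its spectrum the pullback of $m$ factors as the square of a tautological section of $f\pullback\NN$ cutting out the ramification divisor.
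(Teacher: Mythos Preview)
The paper does not actually prove this lemma; it is stated with a citation to \cite[App.~B]{auel_bernardara_bolognesi:quadrics} and no proof is given here. So there is no in-paper argument to compare against, and your proposal stands on its own.

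Your argument is correct and is the natural one. A few remarks. First, your worry in the final paragraph is overstated: the center $\CliffZ$ is intrinsically an $\OO_S$-algebra regardless of the $\LL$-twist, so the trace splitting $\CliffZ=\OO_S\oplus\NN$ and the squaring map $m:\NN^{\tensor 2}\to\OO_S$ are canonical; you never need to identify $\NN$ in terms of $\det\EE$ and $\LL$. The only thing to check is that the image ideal of $m$ agrees with the ideal sheaf of $D$, and this is a local statement which your computation $z^2=(-1)^m u_1\cdots u_n\,\pi$ settles, since $D$ is by definition cut out locally by the determinant of the Gram matrix, i.e., by a unit times $\pi$. Second, for the injectivity of $m$ (and of $f\pullback\NN\hookrightarrow\OO_T$) you are implicitly using that $q$ is generically regular on the integral scheme $S$, so that $\pi$ is not identically zero; this is part of the standing hypotheses and worth saying explicitly. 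Third, the identification $f\pullback\OO_S(D)\isom\OO_T(\wt D)^{\tensor 2}$ can also be read off directly from $f\pullback\pi=u^{-1}z^2$, i.e., $f\pullback D=2\wt D$ as Cartier divisors, which is perhaps the cleanest way to phrase your last step.
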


By abuse of notation, we also denote by $\CliffAlg_0 =
\CliffAlg_0(\EE,q,\LL)$ the $\OO_T$-algebra associated to the
$\CliffZ$-algebra $\CliffAlg_0 = \CliffAlg(\EE,q,\LL)$.  The center
$\CliffZ$ is an \'etale algebra over every point of $S$ where
$(\EE,q,\LL)$ is regular and $\CliffB_0$ is an Azumaya algebra over
every point of $T$ lying over a point of $S$ where $(\EE,q,\LL)$ is
regular.  

\begin{lemma}
\label{lem:locally_free}
Let $(\EE,q,\LL)$ be a quadratic form with simple degeneration over an
integral scheme $S$ with 2 invertible and let $T \to S$ be the
discriminant cover.  Then $\CliffB_0$ is a locally free
$\OO_T$-algebra.
\end{lemma}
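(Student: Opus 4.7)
The statement is local on $T$ and, since $T \to S$ is affine, local on $S$ as well. The plan is to reduce to an explicit local diagonalization of $q$ and exhibit a $\CliffZ$-basis of $\CliffB_0$ by hand. Accordingly I would assume $S = \Spec R$ with $R$ local and $2 \in R\mult$; since the discriminant cover is only defined in even rank (Definition~\ref{defin:discriminant_cover}), write $n = \rk \EE = 2m$. By Lemma~\ref{lem:simple_local}, after trivializing $\LL$, the form diagonalizes as $q \isom \qf{u_1,\ldots,u_{n-1},\pi}$ with $u_i \in R\mult$ and $\pi \in R$, so $\CliffAlg(\EE,q)$ is generated by orthogonal elements $e_1, \ldots, e_n$ with $e_i^2 = u_i$ for $i < n$ and $e_n^2 = \pi$. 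A direct computation with the standard monomial basis then identifies the center as $\CliffZ = R \oplus R \cdot z$, where $z = e_1 e_2 \cdots e_n$ and $z^2 = \ep\, u_1 \cdots u_{n-1}\, \pi$ for some sign $\ep \in \{\pm 1\}$ depending only on $n$.

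The key step is to show that the $2^{n-2}$ elements
$$
\mathcal{B} = \{\, e_I : I \subseteq \{1,\ldots,n-1\},\ |I| \text{ even} \,\}
$$
form a free $\CliffZ$-basis of $\CliffB_0$. The pivotal identity is
$$
z \cdot e_I \;=\; \delta_I \, e_{I^c}, \qquad \delta_I = \pm \textstyle\prod_{i \in I} u_i \in R\mult,
$$
with $I^c = \{1,\ldots,n\} \setminus I$. The essential observation is that since $n \notin I$, every square $e_i^2$ that appears in the collapse of $e_I \cdot e_1 e_2 \cdots e_n$ is a unit $u_i$ rather than the non-unit $\pi$, which is exactly why $\delta_I$ lands in $R\mult$. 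Since $I \mapsto I^c$ is a bijection between even subsets of $\{1,\ldots,n-1\}$ and even subsets of $\{1,\ldots,n\}$ containing $n$, the family $\{\, e_I,\ z \cdot e_I : I \in \mathcal{B} \,\}$ coincides, up to units in $R$, with the standard $R$-basis $\{\, e_J : |J| \text{ even}\,\}$ of $\CliffB_0$. Generation over $\CliffZ$ is then immediate, and any putative $\CliffZ$-linear relation $\sum_I (a_I + b_I z)\, e_I = 0$ unwinds to an $R$-linear relation among the standard basis, forcing $a_I = 0$ and $b_I \delta_I = 0$, hence $b_I = 0$.

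The main obstacle will be the bookkeeping in the identity $z \cdot e_I = \delta_I\, e_{I^c}$: one must carefully track the signs accrued in moving each $e_{i_j}$ with $i_j \in I$ past the factors in $z$, and verify that only the units $u_i$ (never $\pi$) enter the coefficient. Once this is settled, $\CliffB_0$ is free of rank $2^{n-2}$ over $\CliffZ$ in each such trivialization, and ranging over trivializations produced by Lemma~\ref{lem:simple_local} gives local freeness of $\CliffB_0$ as an $\OO_T$-module.
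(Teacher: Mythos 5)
Your argument is correct, and it takes a genuinely different route from the paper's. The paper argues structurally: after reducing to $S=\Spec R$ local and invoking Lemma~\ref{lem:simple_local} to write $(\EE,q)=(\EE_1,q_1)\perp(R,\qf{\pi})$ with $q_1$ regular, it considers the multiplication map $\CliffAlg_0(\EE_1,q_1)\tensor_R\CliffZ(\EE,q)\to\CliffAlg_0(\EE,q)$, shows injectivity from the fact that the source is Azumaya over $\CliffZ(\EE,q)$, and then upgrades to an isomorphism by comparing generic fibers when $\pi\neq 0$ (with a separate short exact sequence argument for $\pi=0$). Local freeness then follows because the source is visibly free over $\CliffZ$. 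You instead exhibit a $\CliffZ$-basis of $\CliffB_0$ by hand, the crux being that the identity $z\,e_I=\delta_I\,e_{I^c}$ only ever squares $e_i$ with $i<n$ (since $n\notin I$), so $\delta_I\in R\mult$. Both proofs turn on the same pivot: the degenerate generator $e_n$ must never be squared when building the $\CliffZ$-module structure; the paper isolates it by tensoring $\CliffAlg_0$ of the regular part with $\CliffZ$, you isolate it by restricting the index set to $\{1,\dotsc,n-1\}$. Your version is more elementary (no Azumaya machinery, no passage to the generic fiber, and no case split on $\pi=0$ since your basis argument never divides by $\pi$), at the cost of the monomial bookkeeping you flag; the paper's is shorter once the Azumaya and generic-fiber language is in place.

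One small point worth making explicit in your write-up: the ``direct computation'' that $\CliffZ=R\oplus Rz$ needs the same caution you exercise in the key identity. To see that a central $\sum_J a_J e_J$ has $a_J=0$ for $\varnothing\neq J\neq\{1,\dotsc,n\}$, you should choose the witnessing commutator $[\,\cdot\,,e_ie_j]$ with $i\in J$, $j\notin J$, and \emph{$i<n$}, so that the coefficient that must vanish is $2u_i\,a_J$ with $u_i$ a unit rather than $2\pi\,a_J$ (which would be inconclusive if $\pi$ is a nonunit or zero). Such an $i$ always exists since $J$ is even, proper, and nonempty. This is a one-line addition and matches the spirit of your argument.
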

\begin{proof}
The question is local, so we can assume that $S = \Spec R$ for a local
domain $R$ with 2 invertible.  We fix a trivialization of $\LL$ and
let $n$ be the rank of $q$.  By Lemma~\ref{lem:simple_local}, we write
$(\EE,q) = (\EE_1,q_1) \perp (R,\qf{\pi})$ for $q_1$ regular and $\pi
\in R$.  Consider the canonically induced homomorphism
$\CliffAlg_0(\EE_1,q_1) \to \CliffAlg_0(\EE,q)$.  We claim that the
map
\begin{equation}
\label{eq:CliffB_locally_free}
\CliffAlg_0(\EE_1,q_1) \tensor_{\OO_S} \CliffZ(\EE,q) \to \CliffAlg_0(\EE,q)
\end{equation}
induced from multiplication in $\CliffAlg_0(\EE,q)$, is a
$\CliffZ(\EE,q)$-algebra isomorphism.  Indeed, since
$\CliffAlg_0(\EE_1,q_1)$ is an Azumaya $\OO_S$-algebra, then
$\CliffAlg_0(\EE_1,q_1) \tensor_{\OO_S} \CliffZ(\EE,q)$ is an Azumaya
$\CliffZ(\EE,q)$-algebra.  In particular, the map is injective.  If
$q$ is generically regular (i.e., $\pi\neq 0$), then then this map is
generically an isomorphism, hence an isomorphism.  Otherwise $\pi=0$,
in which case $\CliffZ(\EE,q) \isom \OO_S[\ep]/(\ep^2)$ and we can argue
directly using the exact sequence
$$
0 \to e \CliffAlg_1(\EE_1,q_1) \to \CliffAlg_0(\EE,q) \to
\CliffAlg_0(\EE_1,q_1) \to 0
$$
where $e \in \EE$ generates the radical, and 
the fact that
$e\CliffAlg_1(\EE_1,q_1) \isom \epsilon \CliffAlg_0(\EE_1,q_1)$.
\end{proof}

Finally, we prove a strengthened version of
\cite[Prop.~3.13]{kuznetsov:quadrics}.

\begin{lemma}
\label{lem:simple_field}
Let $(V,q)$ be a quadratic form of even rank $n=2m > 2$ over a field $k$.
Then the following are equivalent:
\begin{enumerate}
\item The radical of $q$ has rank at most 1.
\item The center $Z(q) \subset C_0(q)$ is a $k$-algebra of rank 2.
\item The algebra $C_0(q)$ is $Z(q)$-Azumaya of degree $2^{m-1}$.
\end{enumerate}
If $n=2$, then $C_0(q)$ is always commutative.
\end{lemma}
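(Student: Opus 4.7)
For $n=2$, the algebra $C_0(q)$ has $k$-dimension $2^{n-1}=2$, hence is automatically a commutative $k$-algebra; this establishes the final claim. For the main case $n=2m>2$ I would establish the cyclic implications (c) $\Rightarrow$ (b) $\Rightarrow$ (a) $\Rightarrow$ (c). The implication (c) $\Rightarrow$ (b) is an immediate dimension count: being Azumaya of degree $2^{m-1}$ over $Z(q)$ means $\dim_k C_0(q)=(2^{m-1})^2\dim_k Z(q)$, and comparing with the known value $\dim_k C_0(q)=2^{n-1}$ forces $\dim_k Z(q)=2$.

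For (a) $\Rightarrow$ (c), I would apply Lemma~\ref{lem:simple_local} over the field $k$ to decompose $(V,q)\isom(V_1,q_1)\perp(k,\qf{\pi})$ with $q_1$ regular of odd rank $n-1=2m-1$ and $\pi\in k$. Classical Clifford theory for regular forms of odd rank then yields that $C_0(q_1)$ is central simple over $k$ of degree $2^{m-1}$. The argument of Lemma~\ref{lem:locally_free} now applies verbatim: the canonical multiplication map $C_0(q_1)\tensor_k Z(q)\to C_0(q)$ is an isomorphism of $Z(q)$-algebras, by injectivity from the Azumaya property combined with dimension when $\pi\ne 0$, and via the explicit exact sequence of that proof when $\pi=0$. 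Since the Azumaya property is preserved under central base change, $C_0(q)$ is Azumaya of degree $2^{m-1}$ over $Z(q)$.

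The key step (b) $\Rightarrow$ (a) I argue by contrapositive. Suppose the radical $V_0=\rad(q)$ has rank $r\geq 2$, and pick a basis $e_1,\dotsc,e_n$ of $V$ diagonalizing $q$ with $e_{n-r+1},\dotsc,e_n\in V_0$. Set $\epsilon=e_{n-1}e_n$ and $\omega=e_1 e_2\cdots e_n$; both lie in $C_0(q)$ since $n$ is even. For $\epsilon$: as $e_{n-1},e_n\in V_0$, the bilinear forms $b_q(e_{n-1},-)$ and $b_q(e_n,-)$ vanish identically on $V$, so each of $e_{n-1},e_n$ anticommutes with every $v\in V$ in $C(q)$, which yields $\epsilon v=v\epsilon$ for all $v\in V$ and places $\epsilon\in Z(C(q))\subset Z(C_0(q))$. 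For $\omega$: for each basis vector $e_j$, either $e_j\notin V_0$ and the standard sign computation gives $\omega e_j=(-1)^{n-1}e_j\omega=-e_j\omega$ (using $n$ even), or $e_j\in V_0$ and then $\omega e_j=e_j\omega=0$ because $e_j^2=0$ appears as a factor; in either case $\omega v+v\omega=0$ for all $v\in V$, so $\omega$ commutes with every degree-two product $vw$, and such products generate $C_0(q)$. Finally $1,\epsilon,\omega$ are three distinct basis elements of $C_0(q)$ in the standard Clifford basis (using $n>2$ to ensure $\{n-1,n\}\subsetneq\{1,\dotsc,n\}$), hence are $k$-linearly independent, giving $\dim_k Z(q)\geq 3$ and contradicting (b).

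The main delicate point is the treatment of vanishing products in the degenerate Clifford algebra when verifying centrality of $\omega$: the nondegenerate sign formula does not directly apply when a radical vector is involved, and one must instead verify that both sides of the relevant commutation are zero separately rather than infer equality from a scalar comparison.
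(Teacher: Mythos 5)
Your proof is correct and follows essentially the same cyclic route (a)$\Rightarrow$(c)$\Rightarrow$(b)$\Rightarrow$(a) as the paper's, using the same orthogonal decomposition $q\isom q_1\perp\qf{\pi}$ and appeal to Lemma~\ref{lem:locally_free} for (a)$\Rightarrow$(c), the same dimension count for (c)$\Rightarrow$(b), and the same three central elements $1$, $e_{n-1}e_n\in\exterior^2\rad(q)$, $\omega=e_1\cdots e_n$ for (b)$\Rightarrow$(a). The point you flag as delicate is actually automatic: the sign relation $\omega e_j=(-1)^{n-1}e_j\omega$ depends only on pairwise orthogonality of the diagonalizing basis and holds uniformly in the Clifford algebra, with both sides simply vanishing when $e_j$ lies in the radical, so your case split is harmless but unnecessary.
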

\begin{proof}
We will prove that
\textit{a)}~$\Rightarrow$~\textit{c)}~$\Rightarrow$~\textit{b)}~$\Rightarrow$~\textit{a)}.
If $q$ is nondegenerate (i.e., has trivial radical), then it is
classical that $Z(q)$ is an \'etale quadratic algebra and $C_0(q)$ is
an Azumaya $Z(q)$-algebra.  If $\rad(q)$ has rank 1, generated by $v
\in V$, then a straightforward computation shows that $Z(q) \isom
k[\varepsilon]/(\epsilon^2)$, where $\epsilon \in v C_1(q) \cap Z(q)
\bslash k$.  Furthermore, we have that $C_0(q)
\tensor_{k[\varepsilon]/(\varepsilon^2)} k \isom C_0(q)/v C_1(q) \isom
C_0(q/\rad(q))$ where $q/\rad(q)$ is nondegenerate of rank $n-1$, cf.\
\cite[II~\S11,~p.~58]{elman_karpenko_merkurjev}.  Since $C_0(q)$ is
finitely generated and free as a $Z(q)$-module by
Lemma~\ref{lem:locally_free}, and has special fiber a central
simple algebra $C_0(q/\rad(q))$ of degree $2^{m-1}$, it is
$Z(q)$-Azumaya of degree $2^{m-1}$.  Hence we've proved that
\textit{a)}~$\Rightarrow$~\textit{c)}

The fact that \textit{c)}~$\Rightarrow$~\textit{b)} is clear from a
dimension count.  To prove \textit{b)}~$\Rightarrow$~\textit{a)},
suppose that $\rk_k \rad(q) \geq 2$.  Then the embedding $\bigwedge^2
\rad(q) \subset C_0(q)$ is central (and does not contain the central
subalgebra generated by $V^{\tensor n}$, as $q$ has rank $>2$).  More
explicitly, if $e_1, e_2, \dotsc, e_n$ is a block diagonalization
(into 1 and 2 dimensional spaces), then $k \oplus k e_1\dotsm e_n
\oplus \exterior^2 \rad(q) \subset Z(q)$.  Thus $Z(q)$ has $k$-rank at
least $2 + \rk_k \bigwedge^2 \rad(q)\geq
3$.
\end{proof}

\begin{prop}
\label{prop:simple_azumaya}
Let $(\EE,q,\LL)$ be a generically regular quadratic form of even rank
on $S$ with discriminant cover $f : T \to S$.  Then
$\CliffB_0(\EE,q,\LL)$ is an Azumaya $\OO_T$-algebra if and only if
$(\EE,q,\LL)$ has simple degeneration or has rank 2 (and any
degeneration).
\end{prop}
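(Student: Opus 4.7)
The plan is to separate the rank 2 case, where there is nothing to constrain, from the rank $n = 2m \geq 4$ case, where the equivalence will reduce to the field-theoretic Lemma~\ref{lem:simple_field} via base change for Azumaya algebras. When $n = 2$, the $\OO_S$-rank of $\CliffB_0$ is $2^{n-1} = 2$, so $\CliffB_0$ coincides with its center $\CliffZ$, hence equals $\OO_T$ as an $\OO_T$-algebra. This is trivially Azumaya of degree one regardless of the degeneration of $q$, so the rank $2$ clause in the statement is vacuous.

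For the direction \emph{simple degeneration $\Rightarrow$ Azumaya} with $n \geq 4$, I would work locally at closed points $s \in S$. Where $q$ is regular, classical Clifford theory gives the conclusion. At a closed point $s$ where $q$ degenerates with rank-one radical, Lemma~\ref{lem:simple_local} produces a local decomposition $(\EE,q) \isom (\EE_1,q_1) \perp (\OO_{S,s},\qf{\pi})$ with $q_1$ regular of odd rank $n - 1 = 2m - 1$. The isomorphism \eqref{eq:CliffB_locally_free} from the proof of Lemma~\ref{lem:locally_free} then identifies
\[
\CliffB_0(\EE,q,\LL) \isom \CliffB_0(\EE_1,q_1) \tensor_{\OO_S} \CliffZ(\EE,q,\LL)
\]
as $\CliffZ$-algebras. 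Since $q_1$ is regular of odd rank, $\CliffB_0(\EE_1,q_1)$ is Azumaya over $\OO_S$ of degree $2^{m-1}$ by the classical theory, and base change of Azumaya algebras along $\OO_S \to \CliffZ = \OO_T$ yields that $\CliffB_0(\EE,q,\LL)$ is Azumaya over $\OO_T$.

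For the converse with $n \geq 4$, suppose $\CliffB_0$ is Azumaya over $\OO_T$. Fix a closed point $s \in S$ and consider the base change along the natural map $\OO_T \to \CliffZ \tensor_{\OO_S} \kappa(s) =: R_s$, noting $R_s$ is a $\kappa(s)$-algebra of dimension $2$. Azumaya is preserved, so $C_0(q_s) = \CliffB_0 \tensor_{\OO_S} \kappa(s)$ is Azumaya over $R_s$. Since the center of an Azumaya algebra equals its base ring, $Z(C_0(q_s)) = R_s$ has $\kappa(s)$-dimension $2$. The implication (b)~$\Rightarrow$~(a) of Lemma~\ref{lem:simple_field} then forces $\rk_{\kappa(s)}\rad(q_s) \leq 1$, i.e., simple degeneration at $s$.

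The arguments in both directions are short, and the main conceptual point is to identify the correct base change in the reverse direction: $\OO_T \to \OO_T \tensor_{\OO_S} \kappa(s)$ gives the Azumaya $R_s$-algebra $C_0(q_s)$ whose center is directly controlled by Lemma~\ref{lem:simple_field}, rather than the geometric fiber $\CliffB_0 \tensor_{\OO_T} \kappa(t)$ at a point $t \in T$ above $s$. Once this is in place, both implications follow immediately from Lemmas~\ref{lem:simple_local}, \ref{lem:locally_free}, and \ref{lem:simple_field} together with the standard Azumaya base change formalism.
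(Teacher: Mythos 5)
Your proposal is correct and follows the same underlying strategy as the paper's proof: reduce the Azumaya criterion for $\CliffB_0$ over $\OO_T$ to a fiberwise statement over closed points of $S$, where Lemma~\ref{lem:simple_field} does the work. The only genuine variations are organizational: you re-derive the implication (a)$\Rightarrow$(c) of Lemma~\ref{lem:simple_field} directly from the tensor decomposition \eqref{eq:CliffB_locally_free} at the level of local rings rather than citing the lemma, you treat rank~$2$ separately at the outset, and you make explicit the key identification $\CliffB_0\tensor_{\OO_T}(\OO_T\tensor_{\OO_S}\kappa(s))\isom C_0(q_s)$ that the paper's four-sentence proof leaves implicit. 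These are harmless expansions of the same argument, not a different route.
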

\begin{proof}
Since $\CliffB_0$ is a locally free $\OO_S$-module it is a locally
free $\OO_T$-module (as $\OO_T$ is locally free of rank 2).  Thus
$\CliffB_0$ is an $\OO_T$-Azumaya algebra if and only if its fiber at
every closed point $y$ of $T$ is a central simple $\kappa(y)$-algebra.
If $f(y)=x$, then $\kappa(y)$ is a $\kappa(x)$-algebra of rank 2.
Then we apply Lemma~\ref{lem:simple_field} to the fiber of $\CliffB_0$
at $x$.
\end{proof}

\section{Orthogonal groups with simple degeneration}
\label{sec:Orthogonal_groups}

The main results of this section concern the special (projective)
orthogonal group schemes of quadratic forms with simple degeneration
over semilocal principal ideal domains.  Let $S$ be a regular integral
scheme.  Recall, from Proposition~\ref{prop:simple_azumaya}, that if
$(\EE,q,\LL)$ is a line bundle-valued quadratic form on $S$ with
simple degeneration along a closed subscheme $D$ of codimension 1,
then the even Clifford algebra $\CliffAlg_0(q)$ is an Azumaya algebra
over the discriminant cover $T \to S$.

\begin{theorem}
\label{thm:isom}
Let $S$ be a regular scheme with 2 invertible, $D$ a regular
divisor, $(\EE,q,\LL)$ a quadratic form of rank 4 on $S$ with
simple degeneration along $D$, $T \to S$ its discriminant cover, and
$\CliffB_0(q)$ its even Clifford algebra over $T$.  The canonical
homomorphism
$$
c: \PGSOrth(q) \to R_{T/S}\PGL(\CliffB_0(q)),
$$
induced from the functor $\CliffB_0$, is an isomorphism of $S$-group schemes.
\end{theorem}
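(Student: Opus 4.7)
The plan is to realize $c$ as a monomorphism of affine $S$-group schemes of the same relative dimension~$6$ and then conclude via a fiberwise isomorphism argument, which splits naturally into the classical exceptional isomorphism ${}^2\Dynkin{A}_1=\Dynkin{D}_2$ away from $D$ and its degeneration $\Dynkin{A}_1=\Dynkin{B}_1$ over $D$.

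Well-definedness of $c$ is automatic from the functoriality of $\CliffAlg_0$: a similitude $(\vp,\lambda_\vp)$ of $(\EE,q,\LL)$ induces an $\OO_S$-algebra automorphism of $\CliffB_0(q)$ whose restriction to the center $\CliffZ(q)=\OO_T$ is trivial precisely for $\vp\in\GSOrth(q)$. Since $\CliffB_0(q)$ is $\OO_T$-Azumaya of degree $2$ by Proposition~\ref{prop:simple_azumaya}, Skolem--Noether identifies the $S$-sheaf of its $\OO_T$-algebra automorphisms with $R_{T/S}\PGL(\CliffB_0(q))$, and the central $\Gm\subset\GSOrth(q)$ of homotheties acts trivially on $\CliffB_0(q)$, so $c$ factors through $\PGSOrth(q)$.

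The statement is fppf-local on $S$, so I reduce to $S=\Spec R$ with $R$ a strictly henselian regular local ring in which $2$ is invertible; Lemma~\ref{lem:simple_local} diagonalizes $q\isom\qf{u_1,u_2,u_3,\pi}$ with $u_i\in R\mult$ and $\pi$ either a unit or a uniformizer cutting out $D$. Both sides have relative dimension $6$ (the target because $T/S$ has rank $2$ and $\dim\PGL_2=3$), and smoothness of the source is Proposition~\ref{prop:smooth}. The kernel of $c$ is trivial: for $v=e_1\in\EE$ (anisotropic since $u_1\in R\mult$) left multiplication embeds $\EE\subsetto v\cdot\CliffB_0(q)\subset\CliffAlg_1(q)$, so any similitude acting trivially on $\CliffB_0(q)$ must act on $\EE$ as a scalar, hence trivially in $\PGSOrth(q)$.

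It then suffices to check that $c$ is an isomorphism on each geometric fiber of $S$. Over $s\in S\setminus D$ the form is nondegenerate with \'etale discriminant cover, and $c$ is the classical exceptional isomorphism ${}^2\Dynkin{A}_1=\Dynkin{D}_2$. Over $s\in D$ the form has a one-dimensional radical, the fiber of $T$ is the ring of dual numbers over $\kappa(s)$, and one verifies directly that both $\PGSOrth(q(s))$ and the fiber of the ramified Weil restriction acquire a common extension structure of $\PGL_2$ by a three-dimensional unipotent group, realizing the specialization $\Dynkin{A}_1=\Dynkin{B}_1$. A monomorphism of flat $S$-group schemes of the same relative dimension that is an isomorphism on each geometric fiber is an isomorphism. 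The principal obstacle is precisely this fiberwise match at $D$, since the target is a Weil restriction along a ramified double cover (a priori not smooth) and the source has a nontrivial unipotent radical from the degeneration; identifying the two descriptions explicitly draws on the structural results on $\PGSOrth$ with simple degeneration developed in Section~\ref{sec:Orthogonal_groups}.
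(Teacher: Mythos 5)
Your strategy is the same one the paper uses --- appeal to the fibral criterion (flat source plus fiberwise isomorphism), handle $S\setminus D$ by the classical exceptional isomorphism, and over $D$ identify both sides as extensions of $\PGL_2$ by a three-dimensional unipotent group realizing the degeneration to $\Dynkin{A}_1=\Dynkin{B}_1$ --- but the proposal stops exactly where the real work begins. Stating that $\PGSOrth(q_s)$ and the fiber of $R_{T/S}\PGL(\CliffB_0)$ are both abstractly extensions of $\PGL_2$ by $\Ga^3$ does not show that $c_s$ is an isomorphism: one must show that $c_s$ carries the unipotent radical $\Group{E}$ of $\SOrth(q_s)$ isomorphically onto $I+\ep\,\Lie{sl}_2(k)$ (and compatibly induces the classical $\Dynkin{A}_1=\Dynkin{B}_1$ iso on quotients), which is where the paper's explicit Clifford-algebra computation \eqref{eq:explicit} comes in. Neither Proposition~\ref{prop:smooth_local} nor anything else in \S\ref{sec:Orthogonal_groups} supplies that identification; it has to be computed. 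So ``one verifies directly'' and ``draws on the structural results'' are placeholders for the core of the proof.

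Two further points. First, an isomorphism on $k$-points of the fiber is not enough for group schemes over a field: the paper additionally checks that the Lie algebra map $\mathrm{d}c$ is injective (via the criterion \cite[VI~Prop.~22.5]{book_of_involutions}), and this requires a second explicit computation with dual numbers that your proposal omits entirely. Second, the monomorphism-plus-equal-relative-dimension framing is superfluous once you invoke ``flat source and fiberwise isomorphism $\Rightarrow$ isomorphism'': those extra hypotheses don't let you avoid the fiber computation (a closed immersion of flat group schemes of the same relative dimension need not be an isomorphism), and the kernel-triviality argument via $\EE\subsetto v\cdot\CliffAlg_0(q)$ is both vaguely stated and unnecessary for the route you and the paper take. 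The genuine gap is the unverified fiberwise isomorphism and Lie-algebra injectivity at the points of $D$.
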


The proof involves several preliminary general results concerning
orthogonal groups of quadratic forms with simple degeneration and will
occupy the remainder of this section.

\medskip

Let $S = \Spec R$ be an affine scheme with 2 invertible, $D \subset S$
be the closed scheme defined by an element $\pi$ in the
Jacobson radical of $R$, and let $(V,q) = (V_1,q_1) \perp (R,\qf{ \pi
})$ be a quadratic form of rank $n$ over $S$ with $q_1$ regular and
$V_1$ free.  Let $Q_1$ be a Gram matrix of $q_1$.  Then as an
$S$-group scheme, $\Orth(q)$ is the subvariety of the affine space of
block matrices
\begin{equation}
\label{eq:relations}
\begin{pmatrix} 
A & v \\
w & u 
\end{pmatrix}
\quad \mbox{satisfying} \quad
\begin{minipage}{6cm}
$A\transpose Q_1 A + \pi\, w\transpose w = Q_1$ \\
$A\transpose Q_1 v + u\pi\, w\transpose = 0$ \\
$v\transpose Q_1 v = (1-u^2)\pi$
\end{minipage}
\end{equation} 
where $A$ is an invertible $(n-1)\times (n-1)$ matrix, $v$ is an $n
\times 1$ column vector, $w$ is a $1 \times n$ row vector, and $u$ a
unit. Note that since $A$ and $Q_1$ are invertible, the second
relation in \eqref{eq:relations} implies that $v$ is determined by $w$
and $u$ and that $\obar{v} = 0$ over $R/\pi$, and hence the third
relation implies that $\obar{u}^2 = 1$ in $R/\pi$.  Define $\SOrth(q)
= \ker(\det : \Orth(q) \to \Gm)$.  If $R$ is an integral domain then
$\det$ factors through $\muu_2$ and $\SOrth(q)$ is the irreducible
component of the identitiy.

\begin{prop}
\label{prop:smooth_local}
Let $R$ be a regular local ring with 2 invertible, $\pi \in \mm$
in the maximal ideal, and $(V,q) = (V_1,q_1) \perp (R,\qf{\pi})$ a quadratic form with $q_1$
regular of rank $n-1$ of $R$. Then $\Orth(q)$ and $\SOrth(q)$ are
smooth $R$-group schemes.
\end{prop}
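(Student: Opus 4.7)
The plan is to apply the infinitesimal smoothness criterion at the identity section, computing $\mathrm{Lie}(\Orth(q))$ directly from the matrix equations~\eqref{eq:relations}. Because $\Orth(q)$ is an affine group scheme of finite presentation over $R$, smoothness at the identity will propagate to all of $\Orth(q)$ by group-scheme translation, and $\SOrth(q)$ will follow by the same argument after imposing $\det = 1$.

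I begin by linearizing \eqref{eq:relations} at the identity. A generic element of $\Orth(q)(R[\ep]/(\ep^2))$ lifting the identity can be written as $g = I + \ep X$ with $X = \begin{pmatrix} A' & v' \\ w' & u' \end{pmatrix}$, and the three relations reduce modulo $\ep^2$ to
\[
Q_1 A' + A'\transpose Q_1 = 0, \qquad Q_1 v' + \pi\, w'\transpose = 0, \qquad \pi u' = 0.
\]
The first says $Q_1 A'$ is skew-symmetric, contributing $\binom{n-1}{2}$ free $R$-parameters in $A'$. The second determines $v'$ uniquely from the free parameter $w' \in R^{n-1}$. The third is the critical step: since $R$ is a regular local domain and $\pi \ne 0$ (the case relevant to the main theorem, where $\pi$ is a uniformizer of the branch divisor), $\pi$ is a nonzerodivisor, forcing $u' = 0$. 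Hence $\mathrm{Lie}(\Orth(q))$ is a free $R$-module of rank $\binom{n-1}{2} + (n-1) = \binom{n}{2}$, matching the classical relative dimension of the orthogonal group of a regular rank-$n$ quadratic form.

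To promote this infinitesimal computation to smoothness of the scheme $\Orth(q)$, I plan to use the scheme-theoretic consequence of \eqref{eq:relations} noted in the paragraph just below it: on the locus where $A$ is invertible and $u$ is a unit, substituting $v = -u\pi A\inv Q_1\inv w\transpose$ from~(2) into~(3) yields an identity of the form
\[
u^2 \bigl(1 + \pi\, \mu(A,w) + \pi^2\, \nu(A,w)^2\bigr) = 1,
\]
whose parenthetical factor is a unit in $R$. This pins $u$ up to a global sign once $(A,w)$ are fixed, while~(2) determines $v$. It remains to parametrize $(A,w)$ subject to~(1). For each $w$, equation~(1) asks for an isometry $A : (V_1, q_1) \isom (V_1, Q_1 - \pi w\transpose w)$; the target is a Henselian deformation of $q_1$ with the same square-class discriminant, hence isometric to $q_1$ over the local ring $R$, so this fiber is an $\Orth(q_1)$-torsor, which is smooth because $q_1$ is regular. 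The projection $(A,w) \mapsto w$ then exhibits a neighborhood of the identity in $\Orth(q)$ as a smooth $R$-scheme of relative dimension $\binom{n-1}{2} + (n-1) = \binom{n}{2}$. Group-scheme translation extends smoothness to all of $\Orth(q)$, and the same parametrization gives $\SOrth(q)$ once the sign of $u$ is fixed by $\det g = 1$.

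The main obstacle is precisely this last step: one must justify that the derived identity $u^2(1 + \pi\mu + \pi^2\nu^2) = 1$ rigidifies $\Orth(q)$ scheme-theoretically, not merely on an open subset, so that the spurious extra $u'$-direction that naively appears over the special fiber $V(\pi)$ (where relation~(3) becomes trivial) is genuinely excluded by the global scheme structure. Pinning down this rigidification is the technical content of simple degeneration with multiplicity one, and it is what makes both $\Orth(q)$ and $\SOrth(q)$ smooth over all of $R$.
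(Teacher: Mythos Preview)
Your proposal correctly identifies the crux but does not resolve it. There are two concrete gaps.

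First, the Lie algebra computation over $R$ does not control smoothness. That $\mathrm{Lie}_R(\Orth(q))$ is free of rank $\binom{n}{2}$ is correct, but repeat your linearization over the residue field $k = R/\mm$: the constraint $\pi u' = 0$ becomes vacuous, and for $\Orth(q)$ one finds $\dim_k \mathrm{Lie}(\Orth(q)_k) = \binom{n}{2} + 1$. Freeness of the Lie algebra over $R$ sees only the $R$-dual of $e^*\Omega^1_{G/R}$ and is blind to any torsion summand. (For $\SOrth(q)$ the spurious $u'$ is killed by the trace-zero condition coming from $\det = 1$, so there the fiberwise count is right; this already shows the two cases behave differently.)

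Second, the circularity you flag in your last paragraph is real and unfilled. Cancelling $\pi$ from the derived relation $\pi\bigl(u^2(1+\pi\mu) - 1\bigr) = 0$ in the coordinate ring of $\Orth(q)$ is legitimate only if that ring is $\pi$-torsion-free, and that is flatness over $R$---precisely the thing to be proved. Your torsor description does not escape this either: the claim ``for each $w$ the fiber is an $\Orth(q_1)$-torsor'' requires $Q_1$ and $Q_1 - \pi w^t w$ to be isometric over the parameter scheme $\A^{n-1}_R$ (or an open neighborhood of $w=0$ therein), not merely over each of its local rings, and you have not exhibited such an isometry.

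The paper sidesteps the cancellation problem entirely via the miracle flatness theorem. It observes that $\Orth(q)$ is cut out by $n(n+1)/2$ equations in an open of $\A^{n^2}_R$, then computes the special fiber of $\SOrth(q)$ explicitly as the Euclidean motion group $\Ga^{n-1} \rtimes \Orth(q_{1,k})$, which has dimension $\binom{n}{2}$, matching the generic fiber. Equidimensionality of the fibers makes the scheme a local complete intersection, hence Cohen--Macaulay; Cohen--Macaulay and equidimensional over a regular base forces flatness \cite[Prop.~15.4.2]{EGA4}, \cite[8~Thm.~23.1]{matsumura:commutative_ring_theory}. Smoothness then follows from flatness together with smoothness of each geometric fiber. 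Flatness is the \emph{output} of this argument, not an input, so no $\pi$-cancellation is ever needed.
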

\begin{proof}
Let $K$ be the fraction field of $R$ and $k$ its residue field.
First, we'll show that the equations in \eqref{eq:relations} define a
local complete intersection morphism in the affine space $\A_R^{n^2}$
of $n\times n$ matrices over $R$.  Indeed, the condition that the
generic $n \times n$ matrix $M$ over $R[x_1,\dotsc,x_{n^2}]$ is
orthogonal with respect to a given symmetric $n\times n$ matrix $Q$
over $R$ can be written as the equality of symmetric matrices
$M\transpose Q M = Q$ over $R[x_1,\dotsc,x_{n^2}][(\det M)\inv]$,
hence giving $n(n+1)/2$ equations.  Hence, the orthogonal group is the
scheme defined by these $n(n+1)/2$ equations in the Zariski open of
$\A_R^{n^2}$ defined by $\det M$.

Since $q$ is generically regular of rank $n$, the generic fiber of
$\Orth(q)$ has dimension $n(n-1)/2$.  By \eqref{eq:relations}, the
special fiber of $\SOrth(q)$ is isomorphic to the group scheme of
euclidean transformations of the regular quadratic space $(V_1,q_1)$,
which is the semidirect product
\begin{equation}
\label{eq:special_fiber}
\SOrth(q) \times_{R} k \isom \Ga^{n-1} \rtimes \Orth(q_{1,k})
\end{equation}
where $\Ga^{n-1}$ acts in $V_1$ by translation and $\Orth(q_{1,k})$
acts on $\Ga^{n-1}$ by conjugation.  In particular, the special fiber
of $\SOrth(q)$ has dimension $(n-1)(n-2)/2 + (n-1) = n(n-1)/2$, and
similarly with $\Orth(q)$.

In particular, $\Orth(q)$ is a local complete intersection morphism.
Since $R$ is Cohen--Macaulay (being regular local) then
$R[x_1,\dotsc,x_{n^2}][(\det M)\inv]$ is Cohen--Macaulay, and thus
$\Orth(q)$ is Cohen--Macaulay.  By the ``miracle flatness'' theorem,
equidimensional and Cohen--Macaulay over a regular base implies that
$\Orth(q) \to \Spec R$ is flat, cf.\ \cite[Prop.~15.4.2]{EGA4} or
\cite[8~Thm.~23.1]{matsumura:commutative_ring_theory}.  Thus
$\SOrth(q) \to \Spec R$ is also flat.  The generic fiber of
$\SOrth(q)$ is smooth since $q$ is generically regular while the
special fiber is smooth since it is a (semi)direct product of smooth
schemes (recall that $\Orth(q_1)$ is smooth since 2 is invertible).
Hence $\SOrth(q) \to \Spec R$ flat and has geometrically smooth
fibers, hence is smooth.
\end{proof}

\begin{prop}
\label{prop:smooth}
Let $S$ be a regular scheme with 2 invertible and $(\EE,q,\LL)$ a
quadratic form of even rank on $S$ with simple degeneration.  Then the
group schemes $\Orth(q)$, $\SOrth(q)$, $\GOrth(q)$, $\GSOrth(q)$, $\PGOrth(q)$, and
$\PGSOrth(q)$ are $S$-smooth.  If $T \to S$
is the discriminant cover and $\CliffB_0(q)$ is the even Clifford algebra
of $(\EE,q,\LL)$ over $T$, then $R_{T/S}\GL_1(\CliffB_0(q))$,
$R_{T/S}\SL_1(\CliffB_0(q))$, and $R_{T/S}\PGL_1(\CliffB_0(q))$ are smooth $S$-schemes.
\end{prop}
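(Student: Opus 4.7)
The plan is to reduce to the local setting of Proposition~\ref{prop:smooth_local} and then propagate smoothness through the standard short exact sequences of group schemes. Since smoothness is Zariski-local on the base, one may assume $S = \Spec R$ for a regular local ring $R$ in which 2 is invertible, and that $\LL$ is trivial. Lemma~\ref{lem:simple_local} then puts $(\EE,q)$ in the form $(\EE_1,q_1) \perp (R, \qf{\pi})$ with $q_1$ regular, so Proposition~\ref{prop:smooth_local} immediately yields smoothness of $\Orth(q)$ and $\SOrth(q)$.

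The main obstacle is the passage to $\GOrth(q)$, because the block-matrix local-complete-intersection argument of Proposition~\ref{prop:smooth_local} is tailored to isometries and does not extend painlessly once the similarity factor is added as a $\Gm$-parameter. The plan is instead to use the multiplication morphism
$$
\mu : \Gm \times \Orth(q) \to \GOrth(q), \qquad (a,\psi) \mapsto a\psi.
$$
A direct check identifies $\ker \mu$ with $\muu_2$, embedded by $a \mapsto (a, a^{-1} \cdot \id)$, while fppf-surjectivity comes from the observation that lifting a similitude $\varphi$ of factor $\lambda$ amounts to extracting a square root of $\lambda$, which is an fppf cover since $\Gm \to \Gm$, $a \mapsto a^2$, is faithfully flat. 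Because 2 is invertible, $\muu_2$ is étale, so $\mu$ is faithfully flat with smooth kernel, hence a smooth morphism; smoothness then transfers from the smooth source to the target, giving $\GOrth(q)$ smooth. Now $\GSOrth(q)$ is the preimage of the identity under $\det/\lambda^m : \GOrth(q) \to \muu_2$, hence open-and-closed in $\GOrth(q)$ and smooth; and $\PGOrth(q) = \GOrth(q)/\Gm$, $\PGSOrth(q) = \GSOrth(q)/\Gm$ are fppf quotients of smooth $S$-group schemes by the smooth central subgroup of homotheties, hence also smooth.

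For the Weil restrictions, Proposition~\ref{prop:simple_azumaya} gives that $\CliffB_0(q)$ is an Azumaya $\OO_T$-algebra, so $\GL_1(\CliffB_0(q))$ is the open subscheme of the affine space of $\CliffB_0(q)$ cut out by inverting the reduced norm, making it a smooth affine $T$-group scheme; similarly $\SL_1(\CliffB_0(q))$, the kernel of the reduced norm, and $\PGL_1(\CliffB_0(q)) = \GL_1(\CliffB_0(q))/\Gm$ are smooth affine over $T$. Since $T \to S$ is finite and flat, the standard fact that Weil restriction along a finite locally free morphism preserves smoothness of affine schemes then delivers smoothness of the three Weil-restricted group schemes over $S$.
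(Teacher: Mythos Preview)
Your proof is correct and follows essentially the same strategy as the paper: reduce $\Orth(q)$ and $\SOrth(q)$ to Proposition~\ref{prop:smooth_local}, realize $\GOrth(q)$ as the quotient $(\Orth(q)\times\Gm)/\muu_2$, pass to $\PGOrth(q)$ and $\PGSOrth(q)$ as quotients by $\Gm$, and handle the Weil restrictions via Proposition~\ref{prop:simple_azumaya} together with preservation of smoothness under Weil restriction along finite flat maps. The only cosmetic difference is that the paper obtains $\GSOrth(q)$ directly as $(\SOrth(q)\times\Gm)/\muu_2$ rather than as an open-and-closed subscheme of $\GOrth(q)$, and you are somewhat more explicit than the paper in verifying the fppf surjectivity of $\Gm \times \Orth(q) \to \GOrth(q)$.
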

\begin{proof}
The $S$-smoothness of $\Orth(q)$ and $\SOrth(q)$ follows from the
fibral criterion for smoothness, with
Proposition~\ref{prop:smooth_local} handling points of $S$ contained
in the discrimnant divisor.  As $\GOrth \isom
(\Orth(q)\times\Gm)/\muu_2$, $\GSOrth(q) \isom
(\SOrth(q)\times\Gm)/\muu_2$, $\PGOrth(q) \isom \GOrth(q)/\Gm$,
$\PGSOrth(q) \isom \GSOrth(q)/\Gm$ are quotients of $S$-smooth group
schemes by flat closed subgroups, they are $S$-smooth.  Finally,
$\CliffB_0(q)$ is an Azumaya $\OO_T$-algebra by Proposition
\ref{prop:simple_azumaya}, hence $\GL_1(\CliffB_0(q))$,
$\SL_1(\CliffB_0(q))$, and $\PGL_1(\CliffB_0(q))$ are smooth
$T$-schemes, hence their Weil restrictions via the finite flat map
$T\to S$ are $S$-smooth by
\cite[App.~A.5,~Prop.~A.5.2]{conrad_gabber_prasad}.
\end{proof}

\begin{remark}
If the radical of $q_s$ has rank $\geq 2$ at a point $s$ of $S$, a
calculation shows that the fiber of $\Orth(q) \to S$ over $s$ has
dimension $> n(n-1)/2$.  In particular, if $q$ is generically regular
over $S$ then $\Orth(q) \to S$ is not flat.  The smoothness of
$\Orth(q)$ is a special feature of quadratic forms with simple
degeneration.
\end{remark}

We will also make frequent reference to the classical version of
Theorem~\ref{thm:isom} in the regular case, when the discriminant
cover is \'etale.

\begin{theorem}
\label{thm:etale}
Let $S$ be a scheme and $(\EE,q,\LL)$ a regular quadratic form of rank
4 with discriminant cover $T \to S$ and even Clifford algebra
$\CliffB_0(q)$ over $T$.  The canonical homomorphism
$$
c : \PGSOrth(q) \to R_{T/S}\PGL(\CliffB_0(q)),
$$
induced from the functor $\CliffB_0$, is an isomorphism of $S$-group
schemes.
\end{theorem}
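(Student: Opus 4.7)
The plan is to construct $c$ from the functoriality of the even Clifford algebra and then to verify it is an $S$-isomorphism by a fibral argument over geometric points. For the construction: any similitude $(\vp,\lambda_\vp)$ of $(\EE,q,\LL)$ functorially induces an isomorphism of the $\Z/2$-graded algebra $\CliffAlg(q)$, hence an $\OO_S$-algebra automorphism of $\CliffB_0(q)$. The induced action on the center $\CliffZ(q) = f\pushforward \OO_T$ is controlled by the Dickson invariant (equivalently, for 2 invertible on $S$, by $\det/\lambda^m$), whose kernel is $\GSOrth(q)$ by definition. One therefore obtains a homomorphism $\GSOrth(q) \to R_{T/S}\AAut_{\OO_T\text{-alg}}(\CliffB_0(q))$. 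Since $\CliffB_0(q)$ is Azumaya of degree $2$ over $T$ by Proposition~\ref{prop:simple_azumaya} (the regular case being a trivial instance of simple degeneration), Skolem--Noether identifies this target with $R_{T/S}\PGL_1(\CliffB_0(q))$; the central subgroup $\Gm \subset \GSOrth(q)$ of homotheties acts trivially by inner conjugation, so the map descends to the claimed $c$.

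Next, I would verify that both $\PGSOrth(q)$ and $R_{T/S}\PGL_1(\CliffB_0(q))$ are smooth affine $S$-group schemes of relative dimension $6$. Smoothness of $\PGSOrth(q)$ for regular $q$ is classical (a special case of Proposition~\ref{prop:smooth}, or \cite[III.5.2.3]{demazure_gabriel}), and smoothness of the Weil restriction follows from the fact that $\PGL_1(\CliffB_0(q))$ is $T$-smooth of relative dimension $3$ and $T\to S$ is finite \'etale, cf.\ \cite[App.~A.5, Prop.~A.5.2]{conrad_gabber_prasad}. The dimensions match: $2\cdot 3 = 6 = \dim \PGSOrth_4$.

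I would then invoke the fibral isomorphism criterion for morphisms of flat, finitely presented $S$-schemes: to show $c$ is an $S$-isomorphism it suffices to check it is an isomorphism on every geometric fiber of $S$. Over an algebraically closed field $\ol{k}$ the discriminant cover splits as $\ol{k}\times\ol{k}$, the regular rank 4 form is hyperbolic, and $\CliffB_0(q) \isom M_2(\ol{k}) \times M_2(\ol{k})$. In this setting $c$ specializes to the classical split exceptional isomorphism $\PGSOrth_4 \isom \PGL_2 \times \PGL_2$ arising from $\Dynkin{D}_2 = \Dynkin{A}_1 \times \Dynkin{A}_1$, for which I would cite \cite[IV.15.B]{book_of_involutions}.

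The main obstacle I anticipate is a bookkeeping one rather than a conceptual one: ensuring that the conjugation action on $\CliffB_0(q)$ produced by functoriality genuinely specializes on geometric fibers to the standard $\Dynkin{D}_2 = \Dynkin{A}_1 \times \Dynkin{A}_1$ identification, rather than to some outer twist thereof. Once the conventions are aligned (sign of the Dickson invariant, normalization of $\CliffZ$, and the ordering of the two factors of $\CliffB_0$ over the split cover), the fiberwise verification is essentially immediate from the classical theory, and the fibral criterion finishes the proof.
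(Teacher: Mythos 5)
Your proposal is correct, and the route it takes is noticeably more structured than the paper's. The paper disposes of this theorem essentially by citation: it asserts that the affine case in Knus--Parimala--Sridharan \cite[\S10]{knus_parimala_sridharan:rank_6_via_pfaffians} ``carries over immediately,'' and points to \cite[IV.15.B]{book_of_involutions} for the field case and to \cite[\S5.3]{auel:clifford}. You instead give a genuine reduction argument: construct $c$ by functoriality of $\CliffAlg_0$ and Skolem--Noether, observe that both sides are smooth (hence flat and finitely presented) affine $S$-group schemes of the same relative dimension, and then invoke the fibral criterion for isomorphisms to reduce to a single split computation over an algebraically closed field, where the statement becomes the classical $\Dynkin{D}_2 = \Dynkin{A}_1\times\Dynkin{A}_1$ identification $\PGSOrth_4 \isom \PGL_2\times\PGL_2$. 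This is the same strategy the paper deploys for the harder degenerate case in Theorem~\ref{thm:isom}, specialized to the regular setting where it is much easier because the fiber is always the split hyperbolic form and the discriminant cover is automatically $\ol{k}\times\ol{k}$. What your argument buys is self-containment and transparency about what needs checking (smoothness, flatness, the fiberwise identification); what the paper's citation buys is brevity and the fact that the affine-scheme statement in \cite{knus_parimala_sridharan:rank_6_via_pfaffians} already handles the descent bookkeeping globally, not just on geometric fibers. One small point to keep straight, which you flag yourself: the statement of the theorem does not assume $2$ invertible, so in the construction of $c$ one should define $\GSOrth(q)$ as the kernel of the Dickson invariant (as the paper notes in \S\ref{sec:simple}) rather than of $\det/\lambda^m$; your fiberwise computation over $\ol{k}$ is insensitive to this, since a regular even-rank form over an algebraically closed field is hyperbolic in any characteristic, but the reference \cite[IV.15.B]{book_of_involutions} should be read with the characteristic-free conventions in mind.
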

\begin{proof}
The proof over affine schemes $S$ in
\cite[\S10]{knus_parimala_sridharan:rank_6_via_pfaffians} carries over
immediately.  See \cite[IV.15.B]{book_of_involutions} for the
particular case of $S$ the spectrum of a field.  Also see
\cite[\S5.3]{auel:clifford}.
\end{proof}

Finally, we come to the proof of the main result of this section.

\begin{proof}[Proof of Theorem~\ref{thm:isom}]
We will use the following fibral criteria for relative isomorphisms
(cf.\ \cite[IV.4~Cor.~17.9.5]{EGA4}):\ let $g : X \to Y$ be a morphism
of $S$-schemes locally of finite presentation over a scheme $S$ and
assume $X$ is $S$-flat, then $g$ is an $S$-isomorphism if and only if
its fiber $g_s : X_s \to Y_s$ is an isomorphism over each geometric
point $s$ of $S$.

For each $s$ in $S \bslash D$, the fiber $q_s$ is a regular quadratic
form over $\kappa(s)$, hence the fiber $c_s : \PGSOrth(q_s) \to
R_{T/S}\PGL(\CliffB_0(q_s))$ is an isomorphism by
Theorem~\ref{thm:etale}.  We are thus reduced to considering the
geometric fibers over points in $D$.  Let $s=\Spec k$ be a geometric
point of $D$.  
By Lemma~\ref{lem:simple_field}, there is a natural identification of
the fiber $T_s = \Spec k_\ep$, where $k_\ep=k[\ep]/(\ep^2)$.

We use the following criteria for isomorphisms of group schemes (cf.\
\cite[VI~Prop.~22.5]{book_of_involutions}):\ let $g : X \to Y$ be a
homomorphism of affine $k$-group schemes of finite type over an
algebraically closed field $k$ and assume that $Y$ is smooth, then $g$
is a $k$-isomorphism if and only if $g : X(k) \to Y(k)$ is an isomorphism
on $k$-points and the Lie algebra map $\text{d}g : \text{Lie}(X) \to
\text{Lie}(Y)$ is an injective map of $k$-vector spaces.

First, we shall prove that $c$ is an isomorphism on $k$ points.
Applying cohomology to the exact sequence
$$
1 \to \muu_2 \to \SOrth(q) \to \PGSOrth(q) \to 1,
$$
we see that the corresponding sequence of $k$-points
is exact since $k$ is algebraically closed.  Hence it suffices to show
that $\SOrth(q)(k) \to \PGL_1(\CliffB_0(q))(k)$ is surjective with
kernel $\muu_2(k)$.

Write $q = q_1 \perp \qf{0}$, where $q_1$ is regular over $k$.  Denote
by $\Group{E}$ the unipotent radical of $\SOrth(q)$.  We will now
proceed to define the following diagram
$$
\xymatrix@R=16pt{
&  & 1 \ar[d] & 1 \ar[d] \\
&  & \muu_2 \ar[d] \ar@{=}[r] & \muu_2 \ar[d] \\
1 \ar[r] & \Group{E} \ar@{..>}[d] \ar[r] & \SOrth(q) \ar[d] \ar[r]&
\Orth(q_1) \ar[d] \ar[r] & 1 \\
1 \ar[r] & I+\ep\, \Lie{c}_0(q) \ar[r] & \PGL_1(\CliffB_0(q))  \ar[d] \ar[r] &
\PGL_1(\CliffB_0(q_1)) \ar[d]\ar[r] & 1 \\
&  & 1 & 1 \\
}
$$
of groups schemes over $k$, and verify that it is commutative with
exact rows and columns.  This will finish the proof of the statement
concerning $c$ being an isomorphism on $k$-points.  We have
$\Het^1(k,\Group{E})=0$ and also $\Het^1(k,\muu_2)=0$, as $k$ is
algebraically closed.  Hence it suffices to argue after taking
$k$-points in the diagram.

The central and right most vertical columns are induced by the
standard action of the (special) orthogonal group on the even Clifford
algebra.  The right most column is an exact sequence
$$
1 \to \muu_2 \to \Orth(q_1) \isom \muu_2 \times \SOrth(q_1) \to \PGL_1(\CliffB_0(q_1)) \to 1
$$
arising from the split isogeny of type $\Dynkin{A}_1=\Dynkin{B}_1$,
cf.\ \cite[IV.15.A]{book_of_involutions}.
The central row is defined by the map
$\SOrth(q)(k) \to \Orth(q)(k)$ defined by
$$
\begin{pmatrix} 
A & v \\
w & u 
\end{pmatrix}
\mapsto A
$$ 
in the notation of \eqref{eq:relations}.  In particular, the group $\Group{E}(k)$ consists of block matrices
of the form
$$
\begin{pmatrix}
I & 0 \\
w & 1
\end{pmatrix}
$$
for $w \in \A^3(k)$.
 Since $\Orth(q_1)$ is
semisimple, the kernel contains the unipotent radical $\Group{E}$, so
coincides with it by a dimension count.  The bottom row is defined as
follows.  By \eqref{eq:CliffB_locally_free}, we have $\CliffB_0(q)
\isom \CliffB_0(q_1) \tensor_k \CliffZ(q) \isom \CliffB_0(q_1)
\tensor_k k_\ep$.  The map $\PGL_1(\CliffB_0(q)) \to
\PGL_1(\CliffB_0(q_1)$ is thus defined by the reduction $k_\ep \to k$.
This also identifies the kernel as $I + \ep\, \Lie{c}_0(q)$, where
$\Lie{c}_0(q)$ is the affine scheme of reduced trace zero elements of
$\CliffB_0(q)$, which is identified with the Lie algebra of
$\PGL_1(\CliffB_0(q))$ in the usual way.  The only thing to check is
that the bottom left square commutes (since by
\eqref{eq:special_fiber}, the central row is split).  By the five
lemma, it will then suffice to show that $\Group{E}(k) \to 1 + \ep\,
\Lie{c}_0(q)(k)$ is an isomorphism.

To this end, we can diagonalize $q = \qf{1,-1,1,0}$, since $k$ is
algebraically closed of characteristic $\neq 2$.  Let $e_1, \dotsc,
e_4$ be the corresponding orthogonal basis.  Then $\CliffB_0(q_1)(k)$
is generated over $k$ by $1$, $e_1e_2$, $e_2e_3$, and $e_1e_3$ and we
have an identification $\vp : \CliffB_0(q_1)(k) \to M_2(k)$ given by
$$
1 \mapsto
\begin{pmatrix}
1 & 0 \\
0 & 1 
\end{pmatrix},
\quad
e_1e_2 \mapsto
\begin{pmatrix}
0 & 1 \\
1 & 0 
\end{pmatrix},
\quad
e_2e_3 \mapsto
\begin{pmatrix}
1 & 0 \\
0 & -1 
\end{pmatrix},
\quad
e_1e_3 \mapsto
\begin{pmatrix}
0 & 1 \\
-1 & 0 
\end{pmatrix}.
$$
Similarly, $\CliffB_0(q)$ is generated over $\CliffZ(q)=k_\ep$ by $1$,
$e_1e_2$, $e_2e_3$, and $e_1e_3$, since we have
$$
e_1e_4 = \ep\, e_2e_3, \quad
e_2e_4 = \ep\, e_1e_3, \quad
e_3e_4 = \ep\, e_1e_2, \quad
e_1e_2e_3e_4 = \ep.
$$
and we have a identification $\psi : \CliffB_0(q) \to M_2(k_\ep)$
extending $\vp$.  With respect to this $k_\ep$-algebra isomorphsm, we
have a group isomorphism $\PGL_1(\CliffB_0(q))(k) = \PGL_2(k_\ep)$ and
a Lie algebra isomorphism $\Lie{c}_0(q)(k) \isom \Lie{sl}_2(k)$, where
$\Lie{sl}_2$ is the scheme of traceless $2 \times 2$ matrices.
We claim that the map $\Group{E}(k) \to I+\ep\, \Lie{sl}_2(k)$ is
explicitly given by
\begin{equation}
\label{eq:explicit}
\begin{pmatrix}
1 & 0 & 0 & 0 \\
0 & 1 & 0 & 0 \\
0 & 0 & 1 & 0 \\
a & b & c & 1
\end{pmatrix}
\mapsto
I - \frac{1}{2} \ep
\begin{pmatrix}
a & -b+c \\
b+c & -a
\end{pmatrix}. 
\end{equation}
Indeed, let $\phi_{a,b,c} \in \Group{E}(S_0)$ be the orthogonal transformation
whose matrix displayed in \eqref{eq:explicit}, and $\sig_{a,b,c}$ its
image in $I+\ep\, \Lie{sl}_2(k)$, thought of as an automorphism of $\CliffB_0(q)(k_\ep)$.
Then we have
\begin{align*}
\sig_{a,b,c}(e_1e_2) & = e_1e_2 + b\ep \,e_2e_3 - a\ep\, e_1e_3 \\
\sig_{a,b,c}(e_2e_3) & = e_2e_3 + c\ep \,e_1e_3 - b\ep\, e_1e_2 \\
\sig_{a,b,c}(e_1e_3) & = e_1e_3 + c\ep \,e_2e_3 - a\ep\, e_1e_2
\end{align*}
and $\sig_{a,b,c}(\ep) = \ep$.  It is then a straightforward calculation to
see that 
$$
\sig_{a,b,c} = \mathrm{ad}\bigl( 1 - \frac{1}{2}\ep(c\, e_1e_2 + a\,
e_2e_3 - b\, e_1e_3) \bigr),
$$
where $\mathrm{ad}$ is conjugation in the Clifford algebra, and
furthermore, that $\psi$ takes $c\, e_1e_2 + a\, e_2e_3 - b\, e_1e_3$
to the $2\times 2$ matrix displayed in \eqref{eq:explicit}.  Thus the
map $\Group{E}(k) \to I+\ep\, \Lie{sl}_2(k)$ is as stated, and in
particular, is an isomorphism.  Thus the diagram is commutative with
exact rows and columns, and in particular, $c : \PGSOrth(q) \to
\PGL_1(\CliffB_0(q))$ is an isomorphism on $k$-points.

Now we prove that the Lie algebra map $\text{d}c$ is injective. 
Consider the commutative diagram
$$
\xymatrix{
1 \ar[r] & I+ x \, \Lie{so}(q)(k) \ar[d]^{1+x\, \text{d}c} \ar[r] & \SOrth(q)(k[x]/(x^2)) \ar[d]^{c(k[x]/(x^2))} \ar[r]&
\SOrth(q)(k) \ar[d] \ar[r] & 1 \\
1 \ar[r] & I+ x\, \Lie{g}(k) \ar[r] & \PGL_1(\CliffB_0(q))(k[\ep,x]/(\ep^2,x^2)) \ar[r] &
\PGL_1(\CliffB_0(q))(k_\ep) \ar[r] & 1 \\
}
$$
where $\Lie{so}(q)$ and $\Lie{g}$ are the Lie algebras of $\SOrth(q)$ and
$R_{k_\ep/k}\PGL_1(\CliffB_0(q))$, respectively.

The Lie algebra $\Lie{so}(q_1)$ of $\Orth(q_1)$ is identified with the
scheme of $3\times 3$ matrices $A$ such that $AQ_1$ is skew-symmetic,
where $Q_1 = \text{diag}(1,-1,1)$.  It is then a consequence of
\eqref{eq:relations} that $I + x\, \Lie{so}(q)(k)$ consists of block matrices of the
form
$$
\begin{pmatrix}
I + x A & 0 \\
x w & 1
\end{pmatrix}
$$
for $w \in \A^3(k)$ and $A \in \Lie{so}(q_1)(k)$.   Since
$$
\begin{pmatrix}
I +  x A & 0 \\
x w & 1
\end{pmatrix}
=
\begin{pmatrix}
I + x A & 0 \\
0 & 1
\end{pmatrix}
\begin{pmatrix}
I  & 0 \\
x w & 1
\end{pmatrix}
=
\begin{pmatrix}
I  & 0 \\
x w & 1
\end{pmatrix}
\begin{pmatrix}
I + x A & 0 \\
0 & 1
\end{pmatrix},
$$
we see that $I+x\,\Lie{so}(q)$ has a direct product decomposition
$\Group{E} \times \bigl(I + x\, \Lie{so}(q_1)\bigr)$.  We claim that
the map $\Lie{h} \to \Lie{g}$ is explicitly given by the product map
$$
\begin{pmatrix}
I + x A & 0 \\
x w & 1
\end{pmatrix}
\mapsto \bigl(I - \ep\beta(x w)\bigr) \bigl(I - \alpha(x
A)\bigr) = I - x(\alpha(A) + \ep \beta(w))
$$
where $\alpha : \Lie{so}(q_1)\to \Lie{sl}_2$ is the Lie algebra
isomorphism 
$$
\begin{pmatrix}
0 &a & -b \\
a  &0 & c  \\
b  &c  & 0  \\
\end{pmatrix}
\mapsto
\frac{1}{2}
\begin{pmatrix}
a & -b+c \\
b+c & -a
\end{pmatrix}
$$
induced from the isomorphism $\PSOrth(q_1) \isom \PGL_2$ and $\beta :
\A^3 \to \Lie{sl}_2$ is the Lie algebra isomorphism
$$
(a~\; b~\; c)
\mapsto
\frac{1}{2}
\begin{pmatrix}
a & -b+c \\
b+c & -a
\end{pmatrix}
$$
as above. Thus $\text{d}c : \Lie{so}(q) \to \Lie{g}$ is an
isomorphism.  
\end{proof}

\begin{remark}
The isomorphism of algebraic groups in the proof of
Theorem~\ref{thm:isom} can be viewed as a degeneration of an
isomorphism of semisimple groups of type ${}^2\Dynkin{A}_1 =
\Dynkin{D}_2$ (on the generic fiber) to an isomorphism of nonreductive
groups whose semisimplification has type $\Dynkin{A}_1 =
\Dynkin{B}_1 = \Dynkin{C}_1$ (on the special fiber).
\end{remark}

\section{Simple degeneration over semi-local rings}
\label{sec:preliminary_results}

The semilocal ring $R$ of a normal scheme at a finite set of points of
codimension 1 is a semilocal Dedekind domain, hence a principal ideal
domain.  Let $R_i$ denote the (finitely many) discrete valuation
overrings of $R$ contained in the fraction field $K$ (the
localizations at the height one prime ideals), $\wh{R}_{i}$ their
completions, and $\wh{K}_{i}$ their fraction fields.  If $\wh{R}$ is
the completion of $R$ at its Jacobson radical $\rad(R)$ and $\wh{K}$
the total ring of fractions, then $\wh{R} \isom \prod_i \wh{R}_i$ and
$\wh{K} \isom \prod_i \wh{K}_i$.  We call an element $\pi \in R$ a
\linedef{parameter} if $\pi = \prod_i \pi_i$ is a product of
parameters $\pi_i$ of $R_i$.

We first recall a well-known result, cf.\
\cite[\S2.3.1]{colliot-thelene:semi-local}.

\begin{lemma}
\label{lem:local_representation}
Let $R$ be a semilocal principal ideal domain and $K$ its field of
fractions.  Let $q$ be a regular quadratic form over $R$ and $u \in
R\mult$ a unit.  If $u$ is represented by $q$ over $K$ then it is
represented by $q$ over $R$.
\end{lemma}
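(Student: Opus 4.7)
The plan is to reduce the lemma to solvability over each completion $\wh{R}_i$ and then patch the local solutions to an element of $V_R$.

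For the local step, I would show that $q$ represents $u$ over each completion $\wh{R}_i$. Starting from $v \in V \tensor K$ with $q(v)=u$, view $v$ in $V \tensor \wh{K}_i$ and write $v = \pi_i^{-m} w$ with $w \in V \tensor \wh{R}_i$ primitive and $m \geq 0$. If $m=0$, then $v$ is already $\wh{R}_i$-integral and represents $u$ there. Otherwise the identity $q(w) = \pi_i^{2m}\, u$ reduces to $\overline{q}_i(\overline{w}) = 0$, exhibiting the nonzero residue $\overline{w}$ as an isotropic vector of the regular residue form $\overline{q}_i$ over $k_i = R/\pp_i$. A regular isotropic quadratic form over a field is universal, so $\overline{q}_i$ represents the unit $\overline{u}$; Hensel's lemma then lifts this to $v_i \in V \tensor \wh{R}_i$ with $q(v_i)=u$, applicable since $u \in R\mult$ and $2$ is invertible make the Jacobian $2\, b_q(\cdot,\cdot)$ of $q - u$ nondegenerate at any representing vector.

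For the patching step, I would use the local solutions $v_i$ to descend to a solution in $V_R$ by a Cassels-style denominator reduction. Write the given $v$ as $w/d$ with $w \in V_R$ primitive and $d \in R$, and argue by induction on the number of prime factors of $d$. If $d \in R\mult$ we are done. Otherwise pick a prime factor $\pi_i$ of $d$. Using the local solution $v_i$ together with CRT in the semilocal PID $R$, I would produce $z \in V_R$ that agrees with $v_i$ modulo a power of $\pi_i$ and whose residue at every other maximal ideal lies in the non-empty Zariski-open locus where $\overline{q}_j$ does not vanish, so that $q(z) \in R\mult$. A suitable substitution $v \mapsto v'$ in the line through $v$ and $z$ on the smooth quadric $\{q=u\} \subset \A(V)_R$ then gives $v' \in V \tensor K$ with $q(v')=u$ and strictly fewer factors of $\pi_i$ in its denominator, integral at every other prime by construction. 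Iteration eliminates all prime factors of $d$ in finitely many steps, producing the desired $R$-integral representation.

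The principal obstacle is the patching step: arranging the substitution to strictly reduce the $\pi_i$-pole of $v$ while preserving integrality at every other prime of $R$. The two ingredients that make this work are, first, CRT in the semilocal PID $R$, which allows the auxiliary vector $z$ to be prescribed simultaneously at all primes, and second, the smoothness of the quadric $\{q=u\}$ over $R$ (a consequence of $u \in R\mult$ and the regularity of $q$), which supplies the required transversality for the secant/reflection construction. This is the route taken in \cite[\S2.3.1]{colliot-thelene:semi-local}.
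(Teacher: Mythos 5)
The paper does not prove this lemma; it cites \cite[\S2.3.1]{colliot-thelene:semi-local} and treats it as known, so the only internal comparison available is with the authors' proof of the degenerate analogue, Proposition~\ref{prop:local_representation}. Your local step over each $\wh{R}_i$ is fine (the paper argues the degenerate version, Lemma~\ref{lem:complete_rep}, by a parity-of-valuation dichotomy instead of Hensel, but both work once $2$ is invertible). The real gap is in the patching step, in the assertion that ``a suitable substitution in the line through $v$ and $z$'' strictly shrinks the $\pi_i$-pole. A reflection through $z\in V_R$ with $q(z)\in R\mult$ cannot do this: if $v=\pi_i^{-m}w$ with $w$ primitive and $m\geq 1$, then $r_z(v)=\pi_i^{-m}\bigl(w-\frac{b_q(w,z)}{q(z)}z\bigr)$, and the bracket cannot vanish mod $\pi_i$ because $\overline{q}(\overline{w})=0$ while $\overline{q}(\overline{z})\neq 0$, so $\overline{w}$ is not proportional to $\overline{z}$. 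What can work is the genuine secant through $v$ and a near-solution $z\equiv v_i\ (\mathrm{mod}\ \pi_i^N)$; the second intersection $v'=z+s(v-z)$ with $s=\frac{q(z)-u}{q(v-z)}$ becomes $\pi_i$-integral for $N\gg 0$ \emph{provided} $\overline{b}_q(\overline{w},\overline{v}_i)\neq 0$ in $k_i$. That transversality is not a consequence of smoothness of $\{q=u\}$; it is an extra condition on the local solution $v_i$, requiring that the affine quadric $\{\overline{q}=\overline{u}\}$ have a $k_i$-rational point outside the hyperplane $\overline{w}^{\perp}$ (automatic in rank $2$, a point-count in higher rank over finite residue fields). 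One must also prescribe $\overline{z}_j$ at the primes $\pi_j$ where $v$ is already integral so that $q(v-z)$ stays a unit there and $v'$ acquires no new poles. None of these points is addressed; they are exactly where the work lies.

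For contrast, the paper's own patching in Proposition~\ref{prop:local_representation} is structurally different and more robust: split $\langle u\rangle$ off $q$ over $K$ and over $\wh{R}$, glue the Witt complements over $\wh{K}$ to a form $\tilde{q}$ over $R$, then use the Eichler-transformation factorization of Proposition~\ref{prop:factorization} to show $q\perp\langle -u\rangle\cong\tilde{q}\perp h$ over $R$, and finish by cancellation (Corollary~\ref{cor:cancel_unit}). That route avoids all transversality and approximation questions, which is precisely what lets the authors push it through for simply degenerate forms. Your denominator-reduction sketch is likely the spirit of the cited argument and is completable, but as written it stops short at the steps that actually carry the weight.
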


We now provide a generalization of Lemma
\ref{lem:local_representation} to the case of simple degeneration.

\begin{prop}
\label{prop:local_representation}
Let $R$ be a semilocal principal ideal domain with 2 invertible and
$K$ its field of fractions.  Let $q$ be a quadratic form over $R$ with
simple degeneration of multiplicity one and let $u \in R\mult$ be a
unit.  If $u$ is represented by $q$ over $K$ then it is represented
by $q$ over $R$.
\end{prop}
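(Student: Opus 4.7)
The plan is to adapt the denominator-clearing proof of Lemma~\ref{lem:local_representation} using the orthogonal decomposition from Lemma~\ref{lem:simple_local}: write $(E, q) \cong (E_1, q_1) \perp (R, \qf{\pi})$ with $q_1$ regular, so that a $K$-representation of $u$ takes the form $u = q_1(v) + \pi t^2$ with $v \in E_1 \otimes K$ and $t \in K$. The goal is to produce such a representation with $v \in E_1$ and $t \in R$.

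The key step is to clear denominators. Choose $d \in R$ of minimal valuation such that $w := dv \in E_1$ and $s := dt \in R$; then $(w, s)$ is primitive at each maximal ideal containing $d$, and $d^2 u = q_1(w) + \pi s^2$. If $d \in R^\times$ we are done. Otherwise, fix a maximal ideal $\mm$ containing $d$ and reduce modulo $\mm$: this gives $0 = \bar q_1(\bar w) + \bar\pi\, \bar s^2$ in $\kappa_\mm$. When $\pi \in \mm$, a valuation comparison ($v_\mm(d^2 u) \geq 2$; $v_\mm(\pi s^2) = 1 + 2 v_\mm(s)$; and $v_\mm(q_1(w)) \geq 2$ if $\bar w = 0$) against primitivity rules out $\bar w = 0$, so $\bar w$ is a nonzero isotropic vector of the regular form $\bar q_1$ over $\kappa_\mm$; when $\pi$ is a unit at $\mm$, primitivity immediately makes $(\bar w, \bar s)$ a nonzero isotropic vector of the regular form $\bar q$ over $\kappa_\mm$.

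The resulting isotropic vector is then used to produce a rank-2 regular subform of $q_1$ (or of $q$) over $R_\mm$ with hyperbolic reduction, by the standard Witt completion: pick $w' \in E_1$ with $b_{q_1}(w, w') = 1$, so that the sublattice spanned by $w, w'$ is regular of rank 2 with hyperbolic reduction at $\mm$. This subform represents $u$ over $R_\mm$ by the same argument used to prove Lemma~\ref{lem:local_representation}, and hence so does $q$. The main obstacle will be the globalization: patching these local $R_\mm$-representations into a single $R$-valued representation. I would handle this by invoking weak approximation over the semilocal PID $R$ together with the smoothness of $\Orth(q)$ from Proposition~\ref{prop:smooth}, using Witt's extension theorem over $K$ to relate the global $K$-representation to each local $R_\mm$-representation by an element of $\Orth(q)(K)$ and then approximating by an element of $\Orth(q)(R)$. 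Alternatively, one may iterate the denominator-reduction at each maximal ideal directly, arguing inductively that $\sum_\mm v_\mm(d)$ strictly decreases at each step, terminating once $d$ becomes a unit.
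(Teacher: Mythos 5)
Your local analysis is sound and in fact mirrors the standard regular-case argument: the decomposition $q\cong q_1\perp\qf{\pi}$ from Lemma~\ref{lem:simple_local}, the denominator-clearing $d^2u=q_1(w)+\pi s^2$ with $(w,s)$ primitive, and the valuation argument ruling out $\bar w=0$ at a prime $\mm\ni\pi$ all check out, and you correctly obtain at each $\mm\mid d$ an isotropic vector of the relevant regular reduction, hence a rank-2 direct summand of $q_1$ (or of $q$) over $R_\mm$ that is hyperbolic, hence a representation of $u$ over each $R_\mm$. The gap is precisely the step you flag as ``the main obstacle,'' and neither of your two alternatives closes it.

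Your alternative (a) is internally confused: you say you want to approximate the Witt-extension isometries $\sigma_\mm\in\Orth(q)(K)$ by a single $\sigma\in\Orth(q)(R)$, but any $\sigma\in\Orth(q)(R)$ preserves the lattice $E$ and therefore cannot move the $K$-point $x_K\in E_K$ into $E$; what one would actually need is an element of $\Orth(q)(K)$ approximating each $\sigma_\mm^{-1}$ at the finitely many primes of $R$, i.e.\ weak approximation for $\Orth(q)$ over $K$ at those places. That statement is plausible (special orthogonal groups are $K$-rational varieties), but it is a nontrivial input that you do not justify, and the smoothness of $\Orth(q)$ over $R$ from Proposition~\ref{prop:smooth} is a red herring here: it gives lifting of points along infinitesimal thickenings of $R$, not density of $\Orth(q)(K)$ in $\prod_\mm\Orth(q)(K_\mm)$. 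Your alternative (b), the iterated denominator-reduction with $\sum_\mm v_\mm(d)$ decreasing, is asserted rather than proved: it is not explained how the local hyperbolic splitting over $R_\mm$ is converted into a new global triple $(w',s',d')$ with strictly smaller $\mm$-adic denominator while not increasing the denominator at other primes, and this is exactly the kind of step for which one classically invokes Eichler transformations and approximation.

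The paper proceeds quite differently and sidesteps denominator-clearing entirely. It first proves representation over the completions $\wh R_i$ (Lemma~\ref{lem:complete_rep}, where completeness is genuinely used to split a hyperbolic plane off the regular part when its reduction is isotropic), yielding $q|_{\wh R}\cong q_2\perp\qf{u}$ alongside $q|_K\cong q_1\perp\qf{u}$; Witt cancellation over $\wh K$ then patches $q_1$ and $q_2$ to a form $\tilde q$ over $R$. The crucial globalization is then performed not on representations of $u$ but on the isometry $q\perp\qf{-u}\cong\tilde q\perp h$, where the adjoined $\qf{-u}$ manufactures a hyperbolic plane so that Proposition~\ref{prop:factorization}, the Eichler--Wall--Roy factorization $\Orth(q\perp h)(\wh K)=\Orth(q\perp h)(K)\cdot\Orth(q\perp h)(\wh R)$ via elementary hyperbolic isometries, becomes available; cancelling $\qf{-u}$ by Corollary~\ref{cor:cancel_unit} finishes. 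In short, the approximation content you need is real, it is supplied by the developed theory of $\EOrth(q,h)$ rather than by a vague appeal to weak approximation or smoothness, and the auxiliary hyperbolic plane is what makes the machinery applicable. Your proposal is missing this device, and as written it does not constitute a proof.
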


For the proof, we'll first need to generalize, to the degenerate case,
some standard results concerning regular forms.  If $(V,q)$ is a
quadratic form over a ring $R$ and $v \in V$ is such that $q(v) = u
\in R\mult$, then the \linedef{reflection} $r_v : V \to V$ through $v$
given by
$$
r_v(w) = w - u\inv b_q(v,w) \, v
$$
is an isometry over $R$ satisfying $r_v(v) = -v$ and $r_v(w)=w$ if $w
\in v^{\perp}$.

\begin{lemma}
\label{lem:transitive}
Let $R$ be a semilocal ring with 2 invertible.  Let $(V,q)$ be a
quadratic form over $R$ and $u \in R\mult$.  Then $\Orth(V,q)(R)$ acts
transitively on the set of vectors $v \in V$ such that $q(v) = u$.
\end{lemma}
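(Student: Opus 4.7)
The plan is to exhibit the required $\sigma \in \Orth(V,q)(R)$ as a product of at most two reflections, choosing the reflection vectors via the Chinese remainder theorem to reconcile the behavior at different maximal ideals.

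Given $v, w \in V$ with $q(v) = q(w) = u \in R\mult$, set $y = v - w$ and $z = v + w$. A straightforward computation yields the identities $q(y) + q(z) = 4u \in R\mult$, $b_q(y, z) = 0$, $b_q(y, v) = q(y)$, and $b_q(z, v) = q(z)$. Consequently, if $q(y) \in R\mult$ then the reflection $r_y$ is defined and $r_y(v) = v - q(y)\inv q(y)\, y = w$; and if $q(z) \in R\mult$ then $r_z(v) = -w$, so $r_w \circ r_z$ (well defined since $q(w) = u$ is a unit) sends $v$ to $w$. When $R$ is local, the identity $q(y) + q(z) = 4u \in R\mult$ forces at least one of $q(y), q(z)$ to lie in $R\mult$ (else both would lie in the maximal ideal), so one of the two constructions above applies directly.

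For a general semilocal $R$, however, neither $q(y)$ nor $q(z)$ need be a unit globally. The plan in this case is to find an auxiliary vector $a \in V$ with both $q(a) \in R\mult$ and $q(r_a(v) - w) \in R\mult$; then $r_{r_a(v) - w} \circ r_a$ sends $v \mapsto r_a(v) \mapsto w$, the second arrow being the direct reflection construction applied to the pair $(r_a(v), w)$ which again have common norm $u$. At each maximal ideal $\mathfrak{m}_i$, the two unit conditions on $a$ cut out a Zariski-open subset of $V \tensor \kappa(\mathfrak{m}_i)$, and an elementary dimension count (using that $\mathrm{char}\,\kappa(\mathfrak{m}_i) \neq 2$ and hence $|\kappa(\mathfrak{m}_i)| \geq 3$) shows this open set is non-empty; the degenerate configurations in which it might fail (namely $v \equiv \pm w \pmod{\mathfrak{m}_i}$, forcing $q(y) = 0$ or $q(z) = 0$ in the residue field) are precisely those where one of $q(y), q(z)$ is already a unit modulo $\mathfrak{m}_i$ and so do not contribute to the obstruction. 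Since an element of $R$ is a unit iff its image in $R/\mathfrak{J} \isom \prod_i \kappa(\mathfrak{m}_i)$ is a unit in every factor, CRT patches the local choices to a residue class in $V \tensor R/\mathfrak{J}$, and any lift to $V$ gives an $a$ with the required units.

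The main obstacle is the non-emptiness verification of the good locus at each residue field together with the bookkeeping needed to combine the elementary direct construction at maximal ideals where $a$-genericity fails with the two-reflection construction elsewhere; the CRT step itself requires only lifting a vector, not a solution to an equation, and so demands no Hensel-type argument.
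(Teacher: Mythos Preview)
Your local argument matches the paper's. The gap is in the semilocal step. You seek a single $a \in V$ with $q(a) \in R\mult$ and $q(r_a(v)-w) \in R\mult$, claiming the locus of such $\bar a$ is nonempty in each $V \otimes \kappa(\mm_i)$ except possibly when $\bar v \equiv \pm \bar w$, where you say the direct construction ``does not contribute to the obstruction.'' But the good locus can be genuinely empty. Suppose at some residue field $k$ one has $\bar w = -\bar v$ and $q|_{\bar v^\perp} \equiv 0$ (allowed by the lemma, which imposes no regularity on $q$; concretely $V=k^2$, $q(x,y)=x^2$, $\bar v=(1,0)$, $\bar w=(-1,0)$). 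Writing any anisotropic $a = s\bar v + e$ with $e \in \bar v^\perp$, one finds $q(a)=s^2 u$ and $b_q(a,\bar v)^2 = 4s^2u^2 = 4u\,q(a)$, which forces $r_a(\bar v)+\bar v \in \bar v^\perp$ and hence $q(r_a(\bar v)-\bar w)=q(r_a(\bar v)+\bar v)=0$ for \emph{every} such $a$. So no $\bar a$ works at this residue, and your CRT step has nothing to lift there. Combining a different construction at this residue with $r_{r_a(v)-w}\circ r_a$ at the others does not produce a single global element of $\Orth(q)(R)$; this is precisely the ``bookkeeping'' you flag but do not carry out, and it is not clear it can be carried out within your framework.

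The paper's semilocal argument is structurally different. Rather than a single auxiliary reflection, it chooses at each residue $k_i$ a short product of reflections $\bar\tau_i$ taking $\bar v_i$ to $-\bar w_i$, lifts the reflection \emph{vectors} via CRT to obtain $\tau \in \Orth(q)(R)$ with $\tau(v) \equiv -w \pmod{\rad R}$, and then observes that $q(\tau(v)+w)\in\rad R$ forces $q(\tau(v)-w)=4u-q(\tau(v)+w)$ to be a global unit automatically---no genericity needed. One final reflection $r_{\tau(v)-w}$ finishes. The key conceptual difference: the paper arranges the unit condition on $q(v'-w)$ by pushing $v'+w$ into the Jacobson radical, whereas you try to arrange it by a generic choice of $a$ that need not exist.
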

\begin{proof}
Let $v, w \in V$ be such that $q(v)=q(w)=u$.  We first prove the lemma
over any local ring with 2 invertible.  Without loss of generality, we
can assume that $q(v - w) \in R\mult$.  Indeed, $q(v + w) + q(v - w) =
4u \in R\mult$ so that, since $R$ is local, either $q(v + w)$ or $q(v
- w)$ is a unit.  If $q(v-w)$ is not a unit, then $q(v + w)$ is and we
can replace $w$ by $-w$ using the reflection $r_{w}$.  Finally, by a
standard computation, we have $r_{v-w}(v) = w$.  Thus any two vectors
representing $u$ are related by a product of at most two reflection.

For a general semilocal ring, the quotient $R/\rad(R)$ is a product of
fields.  By the above argument, $\ol{v}$ can be transported to
$-\ol{w}$ in each component by a product $\ol{\tau}$ of at most two
reflections.  By the Chinese remainder theorem, we can lift
$\ol{\tau}$ to a product of at most two reflections $\tau$ of $(V,q)$
transporting $v$ to $-w+z$ for some $z \in \rad(R) \tensor_R V$.
Replacing $v$ by $-w+z$, we can assume that $v+w=z \in \rad(R)\tensor_R
V$.  Finally, $q(v+w) + q(v-w)= 4u$ and $q(v+w) \in \rad(R)$, thus
$q(v-w)$ is a unit. As before, $r_{v-w}(v)=w$.
\end{proof}

\begin{cor}
\label{cor:cancel_unit}
Let $R$ be a semilocal ring with 2 invertible.  Then regular forms can be
cancelled, i.e. if $q_1$ and $q_2$ are quadratic forms and $q$ a
regular quadratic form over $R$ with $q_1 \perp q \isom q_2 \perp q$,
then $q_1 \isom q_2$.
\end{cor}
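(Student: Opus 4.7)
The plan is the standard Witt cancellation argument, now made available by Lemma~\ref{lem:transitive}. First I would reduce to the case where $q$ has rank one. Over a semilocal ring with $2$ invertible, any regular quadratic form is diagonalizable, so we may write $q \isom \qf{u_1,\dotsc,u_n}$ with $u_i \in R\mult$. An obvious induction on $n$ then shows that it suffices to treat the case $q=\qf{u}$ for a single unit $u \in R\mult$.

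Now suppose $\phi : (V_1,q_1)\perp (Re,\qf{u}) \isom (V_2,q_2)\perp (Re',\qf{u})$ is an isometry, where $e$ and $e'$ denote the chosen generators of the two copies of $\qf{u}$. The vector $\phi(e)$ satisfies $(q_2 \perp \qf{u})(\phi(e)) = u$, and so does $e'$. By Lemma~\ref{lem:transitive} applied to the quadratic form $q_2 \perp \qf{u}$ on $V_2 \oplus Re'$, there exists an isometry $\tau \in \Orth(q_2\perp\qf{u})(R)$ with $\tau(\phi(e)) = e'$. The composite $\tau\circ\phi$ is then an isometry $q_1\perp\qf{u}\isom q_2\perp\qf{u}$ sending $e$ to $e'$, and hence restricts to an isometry of the orthogonal complements $e\perpend \isom e'{}\perpend$, that is, an isometry $q_1 \isom q_2$.

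The only real content is the transitivity statement in Lemma~\ref{lem:transitive}; once that is granted, the argument above is routine. I would expect no genuine obstacle here beyond ensuring that diagonalization of regular forms and the rank-one reduction go through in the semilocal setting, both of which are standard for semilocal rings with $2$ invertible.
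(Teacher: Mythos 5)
Your argument is correct and essentially identical to the paper's: both reduce to $q = \qf{u}$ by diagonalization, apply Lemma~\ref{lem:transitive} to transport $\vp(e)$ to $e'$ by an isometry of $q_2 \perp \qf{u}$, and then pass to orthogonal complements. No discrepancies to note.
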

\begin{proof}
Regular quadratic forms over a semilocal ring with 2 invertible are
diagonalizable. Hence we can reduce to the case of rank one form $q =
(R,\qf{u})$ for $u \in R\mult$.  Let $\vp : q_1 \perp (R\, w_1,\qf{u})
\isom q_2 \perp (R\, w_2,\qf{u})$ be an isometry.  By Lemma
\ref{lem:transitive}, there is an isometry $\psi$ of $q_2 \perp (R\,
w_2,\qf{u})$ taking $\vp(w_1)$ to $w_2$, so that $\psi\circ\vp$ takes
$w_1$ to $w_2$.  By taking orthogonal complements, $\vp$ thus induces
an isometry $q_1 \isom q_2$.
\end{proof}

\begin{lemma}
\label{lem:complete_rep}
Let $R$ be a complete discrete valuation ring with 2 invertible and
$K$ its fraction field.  Let $q$ be a quadratic form with simple
degeneration of multiplicity one and let $u \in R\mult$.  If $u$ is
represented by $q$ over $K$ then it is represented by $q$ over $R$.
\end{lemma}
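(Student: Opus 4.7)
The plan is to invoke Lemma~\ref{lem:simple_local} to write $q \isom q_1 \perp \qf{\pi}$ over $R$, where $q_1$ is a regular form of rank $n-1$ and $\pi$ is a uniformizer of $R$ (this is where the multiplicity-one hypothesis is used). Given $v \in (V_1 \oplus R)\tensor K$ with $q(v) = u$, I scale to integrality: write $v = w/\pi^m$ with $w = (w_1,s) \in V_1 \oplus R$ primitive (not in $\pi(V_1 \oplus R)$) and $m \geq 0$ minimal. If $m=0$ we are done; the remaining task is to show that the case $m \geq 1$ forces $q_1$ to be isotropic over $R$, which upgrades $q$ to a universal form.

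Assuming $m \geq 1$, the equation $q_1(w_1) + \pi s^2 = u\pi^{2m}$ has $\pi$-valuation at least $2$. First I would rule out $w_1 \in \pi V_1$: writing $w_1 = \pi w_1'$ gives $q_1(w_1) \in (\pi^2)$, forcing $\pi s^2 \in (\pi^2)$ and hence $s \in (\pi)$, which contradicts the primitivity of $w$. So $\ol{w}_1 \in V_1\tensor k$ is nonzero, and reducing $q_1(w_1)+\pi s^2 \in (\pi)$ modulo $\pi$ yields $\ol{q}_1(\ol{w}_1)=0$. Since $\ol{q}_1$ is nondegenerate, the gradient of $q_1$ at $\ol{w}_1$ equals $2\psi_{q_1}(\ol{w}_1) \neq 0$, so the affine $R$-scheme $\{q_1 = 0\}$ is smooth over $R$ at $\ol{w}_1$; Hensel's lemma applied to the complete DVR $R$ then lifts $\ol{w}_1$ to a primitive isotropic vector of $q_1$ over $R$.

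A regular quadratic form over a local ring with $2$ invertible containing a primitive isotropic vector splits off a hyperbolic plane, so $q_1 \isom H \perp q_1'$, and therefore $q \isom H \perp q_1' \perp \qf{\pi}$ over $R$. The hyperbolic plane $H$ is universal over $R$: in coordinates making $H(x,y)=xy$, one has $H(u,1) = u$ for every $u \in R$. Hence $q$ represents every element of $R$, in particular the given $u$.

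The main obstacle is the valuation bookkeeping in the scaling step: one must extract, from the very failure of the $K$-representation to be integral, precisely the existence of a nonzero isotropic vector of $\ol{q}_1$ over $k$ --- neither the residual possibility that $\ol{w}_1 = 0$ nor the possibility that $\ol{q}_1(\ol{w}_1) \neq 0$ is compatible with primitivity of $w$ together with $m \geq 1$. Once that extraction is in place, the Henselian lift, the hyperbolic splitting, and the universality of $H$ combine routinely to complete the proof.
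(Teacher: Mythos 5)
Your proof is correct and follows essentially the same route as the paper's: decompose $q \isom q_1 \perp \qf{\pi}$ via Lemma~\ref{lem:simple_local}, and reduce the question to whether $\ol{q}_1$ is isotropic over the residue field, in which case completeness gives a hyperbolic splitting. The only organizational difference is that the paper splits cases on the (an)isotropy of $\ol{q}_1$ up front and handles the anisotropic case via a valuation-parity argument, whereas you argue contrapositively from non-integrality of the representing vector to isotropy of $\ol{q}_1$; the two are logically equivalent.
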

\begin{proof}
For a choice of parameter $\pi$ of $R$, write $(V,q) = (V_1,q_1) \perp
(R\, e,\qf{\pi})$ with $q_1$ regular.  There are two cases, depending
on whether $\ol{q}_1$ is isotropic over the residue field.  First, if
$\ol{q}_1$ is anisotropic, then $q_1$ only takes values with even
valuation.  Let $v \in V_K$ satisfy $q|_K(v)=u$ and write $v = \pi^n
v_1 + a\pi^m e$ with $v_1 \in V_1$ such that $\ol{v}_1 \neq 0$ and $a
\in R\mult$.  Then we have $\pi^{2n} q_1(v_1) + a\pi^{2m+1} = u$.  By
parity considerations, we see that $n=0$ and m$\geq 0$ are forced, and
thus $m \geq 0$, thus $v \in V$ and $u$ is represented by $q$.
Second, $R$ being complete, if $\ol{q}_1$ is isotropic, then
it splits off a hyperbolic plane, so represents $u$. 
\end{proof}

We now recall the theory of elementary hyperbolic isometries initiated
by Eichler \cite[Ch.~1]{eichler:orthogonal_groups} and developed in
the setting of regular quadratic forms over rings by Wall
\cite[\S5]{wall:orthogonal} and Roy \cite[\S5]{roy:cancellation}.  See
also \cite{ojanguren:polynomial_algebras},
\cite{parimala:Dedekind_rings}, \cite{suresh:Eichler}, and
\cite[III~\S2]{baeza:semilocal_rings}.  We will need to develop the
theory for quadratic forms that are not necessarily regular.

Let $R$ be a ring with 2 invertible, $(V,q)$ a quadratic form over
$R$, and $(R^2,h)$ the hyperbolic plane with basis $e,f$.  For $v \in
V$, define $E_v$ and $E_v^*$ in $\Orth(q\perp h)(R)$ by
$$
\begin{minipage}{4cm}
\begin{align*}
E_v(w) & = w + b(v,w)e \\
E_v(e)\; & = e \\
E_v(f)\, & = -v - 2\inv q(v) e + f
\end{align*}
\end{minipage}
\qquad\qquad
\begin{minipage}{4cm}
\begin{align*}
E_v^*(w) & = w + b(v,w)f, \qquad\quad \text{for $w \in V$}\\
E_v^*(e)\; & = -v - 2\inv q(v) f + e \\
E_v^*(f)\, & = f.
\end{align*}
\end{minipage}
$$
Define the group of \linedef{elementary hyperbolic isometries}
$\EOrth(q,h)(R)$ to be the subgroup of $\Orth(q\perp h)(R)$
generated by $E_v$ and $E_v^*$ for $v \in V$.  

For $u \in R\mult$, define $\alpha_u \in \Orth(h)(R)$ by
$$
\alpha_u(e)=ue, \qquad \alpha_u(f) = u\inv f
$$
and $\beta_u \in \Orth(h)(R)$ by
$$
\beta_u(e)=u\inv f, \qquad \beta_u(f) = u e.
$$
Then $\Orth(h)(R) = \{\alpha_u \, : \, u \in R\mult\} \cup \{\beta_u
\, : \, u \in R\mult\}$.  One can verify the following identities:
\begin{align*}
\alpha_u\inv E_v \alpha_u = E_{u\inv v}, & \qquad 
\beta_u\inv E_v \beta_u = E_{v}^*, \\
\alpha_u\inv E_v^* \alpha_u = E_{u\inv v}^*, & \qquad 
\alpha_u\inv E_v^* \alpha_u = E_{v}.
\end{align*}
Thus $\Orth(h)(R)$ normalizes $\EOrth(q, h)(R)$.  

If $R=K$ is a field and $q$ is nondegenerate, then $\EOrth(q,h)(K)$
and $\Orth(h)(K)$ generate $\Orth(q\perp h)(K)$ (see
\cite[ch.~1]{eichler:orthogonal_groups}) so that
\begin{equation}
\label{eq:generate}
\Orth(q\perp h)(K) = \EOrth(q,h)(K) \rtimes \Orth(h)(K).
\end{equation}

\begin{prop}
\label{prop:factorization}
Let $R$ be a semilocal principal ideal domain with 2 invertible and
$K$ its fraction field.  Let $\wh{R}$ be the completion of $R$ at the
radical and $\wh{K}$ its fraction field.  Let $(V,q)$ be a quadratic
form over $R$ that is nondegenerate over $K$.  Then every element $\vp
\in \Orth(q\perp h)(\wh{K})$ is a product $\vp_1 \vp_2$, where $\vp_1
\in \Orth(q \perp h)(K)$ and $\Orth(q\perp h)(\wh{R})$.
\end{prop}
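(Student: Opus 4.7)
The plan is to combine the Eichler decomposition \eqref{eq:generate} over $\wh{K}$ with a $K$-rational approximation argument, exploiting the principal ideal domain structure of $R$ at each step. Since $q \perp h$ is nondegenerate over $\wh{K}$, I can write $\vp = \alpha \cdot \tau$ with $\alpha \in \EOrth(q,h)(\wh{K})$ and $\tau \in \Orth(h)(\wh{K})$; the aim is to split each of the two factors into a $K$-part followed by an $\wh{R}$-part and then reassemble.

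First I would dispose of $\tau$. Every element of $\Orth(h)(\wh{K})$ is either $\alpha_u$ or $\beta_u$ for some $u = (u_i) \in \wh{K}\mult = \prod_i \wh{K}_i\mult$. Using the semilocal principal ideal domain structure of $R$ (via the Chinese remainder theorem), I can pick $r \in K\mult$ with $v_{R_i}(r) = v_{\wh{R}_i}(u_i)$ at every $i$, so that $u r\inv \in \wh{R}\mult$. This yields a factorization $\tau = \tau_K \cdot \tau_{\wh{R}}$ with $\tau_K \in \Orth(h)(K)$ and $\tau_{\wh{R}} \in \Orth(h)(\wh{R})$.

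Next I would approximate $\alpha$. Writing $\alpha = \prod_j E_{v_j}^{\epsilon_j}$ with $v_j \in V_{\wh{K}}$, the density of $V_K$ inside $V_{\wh{K}}$ allows me to choose $v_j' \in V_K$ arbitrarily close to the $v_j$, and setting $\alpha' = \prod_j E_{v_j'}^{\epsilon_j} \in \EOrth(q,h)(K)$ gives an approximation of $\alpha$ in $\Orth(q \perp h)(\wh{K})$. Because the matrix entries of $\alpha$ and of $\alpha\inv$ have bounded denominators (controlled by the valuations of the $v_j$), by taking the $v_j'$ close enough I can make $\gamma := (\alpha')\inv \alpha$ congruent to the identity modulo any prescribed power of $\pi$; in particular, $\gamma$ will have $\wh{R}$-valued entries and therefore lie in $\Orth(q \perp h)(\wh{R})$.

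Combining the two splittings then gives
$$
\vp = \alpha' \gamma \tau_K \tau_{\wh{R}} = (\alpha' \tau_K) \cdot (\tau_K\inv \gamma \tau_K) \cdot \tau_{\wh{R}},
$$
in which the first factor already lies in $\Orth(q \perp h)(K)$ and the last in $\Orth(h)(\wh{R})$. The main obstacle I anticipate is controlling the middle factor $\tau_K\inv \gamma \tau_K$: conjugation by $\tau_K = \alpha_r$ (or $\beta_r$) with $r \in K\mult$ rescales the off-hyperbolic entries of $\gamma$ by $r^{\pm 1}$ and the anti-diagonal $h$-block entries by $r^{\pm 2}$, so a negative valuation of $r$ at some place could move those entries outside $\wh{R}$. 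The remedy is to have chosen the approximation in the previous step fine enough that $\gamma \equiv I \pmod{\pi^N}$ for some $N > 2\,\max_i |v_{R_i}(r)|$; then the rescaling is absorbed, $\tau_K\inv \gamma \tau_K$ remains in $\Orth(q \perp h)(\wh{R})$, and setting $\vp_1 = \alpha' \tau_K$ and $\vp_2 = (\tau_K\inv \gamma \tau_K)\,\tau_{\wh{R}}$ gives the required factorization.
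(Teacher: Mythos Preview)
Your argument is correct and follows essentially the same strategy as the paper: use the Eichler decomposition \eqref{eq:generate}, split the $\Orth(h)$-factor into a $K$-part and an $\wh{R}$-part via valuation matching, and approximate the $\EOrth$-factor by a $K$-rational one using density of $V_K$ in $V_{\wh{K}}$. The only organizational difference is in how the conjugation by $\tau_K$ is handled. You fix $\tau_K$ first and then choose the approximation precision $N$ large enough (relative to the valuations of $r$) so that $\tau_K^{-1}\gamma\tau_K$ remains $\wh{R}$-integral. The paper instead approximates without tracking an explicit $N$, observes that the conjugate $\gamma_1^{-1}\vp_2'\gamma_1$ lands back in $\EOrth(q,h)(\wh{K})$ by the normality identities $\alpha_u^{-1}E_v\alpha_u = E_{u^{-1}v}$ etc., and then applies the density argument a \emph{second} time to this conjugated element. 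Your single-pass approach with explicit precision control and the paper's two-pass approach using normality are interchangeable; yours is more hands-on, the paper's avoids bookkeeping on $N$ at the cost of invoking the approximation twice.
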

\begin{proof}
We follow portions of the proof in
\cite[Prop.~3.1]{parimala:Dedekind_rings}.  As topological rings
$\wh{R}$ is open in $\wh{K}$, and hence as topological groups
$\Orth(q\perp h)(\wh{R})$ is open in $\Orth(q\perp h)(\wh{K})$.  In
particular, $\Orth(q \perp h)(\wh{R}) \cap \EOrth(q,h)(\wh{K})$ is
open in $\EOrth(q,h)(\wh{K})$.  Since $R$ is dense in $\wh{R}$, $K$ is
dense in $\wh{K}$, $V\tensor_R K$ is dense in $V \tensor_R \wh{K}$,
and hence $\EOrth(q,h)(K)$ is dense in $\EOrth(q,h)(\wh{K})$.

Thus, by topological considerations, every element $\vp'$ of
$\EOrth(q\perp h)(\wh{K})$ is a product $\vp_1' \vp_2'$, where $\vp_1'
\in \EOrth(q,h)(K)$ and $\vp_2' \in \EOrth(q,h)(\wh{K}) \cap
\Orth(q\perp h)(\wh{R})$.  Clearly, every element $\gamma$ of
$\Orth(h)(\wh{K})$ is a product $\gamma_1 \gamma_2$, where $\gamma_1
\in \Orth(h)(K)$ and $\gamma_2 \in \Orth(h)(\wh{R})$.

The form $q \perp h$ is nondegenerate over $\wh{K}$, so by
\eqref{eq:generate}, every $\vp \in \Orth(q\perp h)(\wh{K})$ is a
product $\vp' \gamma$, where $\vp' \in \EOrth(q,h)(\wh{K})$ and
$\gamma \in \Orth(h)(\wh{K})$.  As above, we can write
$$
\vp = \vp' \gamma = \vp_1' \vp_2' \gamma_1 \gamma_2 = \vp_1' \gamma_1
(\gamma_1\inv \vp_2' \gamma_1) \gamma_2.
$$
Since $\EOrth(q,h)(\wh{K})$ is a normal subgroup,
$\gamma_1\inv\vp_2'\gamma_1 \in \EOrth(q,h)(\wh{K})$ and is thus a
product $\psi_1\psi_2$, where $\psi_1 \in \EOrth(q,h)(K)$ and $\psi_2
\in \EOrth(q,h)(\wh{K}) \cap \Orth(q\perp h)(\wh{R})$.  Finally, $\vp$
is a product $(\vp_1'\gamma_1 \psi_1)(\psi_2\gamma_2)$ of the desired
form.
\end{proof}

\begin{proof}[Proof of Proposition~\ref{prop:local_representation}]
Let $\wh{R}$ be the completion of $R$ at the radical and $\wh{K}$ the
total ring of fractions.  As $q|_K$ represents $u$, we have a
splitting $q|_K \isom q_{1} \perp \qf{u}$.  We have that $q|_{\wh{R}}
= \prod_i q|_{\wh{R}_i}$ represents $u$ over $\wh{R} = \prod_i
\wh{R}_i$, by Lemma~\ref{lem:complete_rep}, since $u$ is represented
over $\wh{K} = \prod_i \wh{K}_i$. We thus have a splitting
$q|_{\wh{R}} \isom q_2 \perp \qf{u}$.  By Witt cancellation over
$\wh{K}$, we have an isometry $\vp : q_{1}|_{\wh{K}} \isom
q_{2}|_{\wh{K}}$, which by patching defines a quadratic form
$\tilde{q}$ over $R$ such that $\tilde{q}_{K} \isom q_1$ and
$\tilde{q}|_{\wh{R}} \isom q_2$.

We claim that $q \perp \qf{-u} \isom \tilde{q} \perp h$.  Indeed, as
$h \isom \qf{u,-u}$, we have isometries
$$
\psi^K : (q \perp \qf{-u})_K \isom
(\tilde{q}\perp h)_K, \qquad 
\psi^{\wh{R}} : (q \perp \qf{-u})_{\wh{R}} \isom
(\tilde{q}\perp h)_{\wh{R}}.
$$  
By Proposition~\ref{prop:factorization}, there exists $\theta_1 \in
\Orth(\tilde{q}\perp h)(\wh{R})$ and $\theta_2 \in
\Orth(\tilde{q}\perp h)(K)$ such that $\psi^{\wh{R}} (\psi^{K})\inv =
\theta_1\inv\theta_2$.  The isometries $\theta_1 \psi^{\wh{R}}$ and
$\theta_2\psi^K$ then agree over $\wh{K}$ and so patch to yield an
isometry $\psi : q \perp \qf{-u} \isom \tilde{q} \perp h$.

As $h \isom \qf{u,-u}$, we have $q \perp \qf{-u} \isom \tilde{q}
\perp \qf{u,-u}$.  By Corollary~\ref{cor:cancel_unit}, we can cancel
the regular form $\qf{-u}$, so that $q \isom \tilde{q}\perp \qf{u}$.
Thus $q$ represents $u$ over $R$.
\end{proof}

\begin{lemma}
\label{lem:diag}
Let $R$ be a discrete valuation ring and $(E,q)$ a quadratic form of
rank $n$ over $R$ with simple degeneration.  If $q$ represents $u \in
R\mult$ then it can be diagonalized as $q \isom \qf{u, u_2, \dotsc,
u_{n-1}, \pi}$ for $u_i \in R\mult$ and some parameter $\pi$.
\end{lemma}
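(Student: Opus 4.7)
The plan is to split off $\langle u\rangle$ as an orthogonal direct summand and then apply Lemma~\ref{lem:simple_local} to the residual form of rank $n-1$.

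First, I would pick $v\in E$ with $q(v)=u$. Because $q(v)\in R\mult$ and $q(\pi w)=\pi^{2}q(w)$ has positive valuation for any $w\in E$, the vector $v$ cannot lie in $\mm E$, i.e., $v$ is unimodular in $E$. With $2$ invertible we have $b_q(v,v)=2u\in R\mult$, so the linear form $E\to R$, $w\mapsto b_q(v,w)$, is surjective. Its kernel $v^{\perp}$ is therefore a direct summand of rank $n-1$, and one obtains an orthogonal decomposition $(E,q)\isom (Rv,\qf{u})\perp (v^{\perp},q')$ where $q'=q|_{v^{\perp}}$.

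Next, I would verify that $q'$ still has simple degeneration. The adjoint $\psi_{q}:E\to E\dual$ decomposes as the orthogonal sum of $\psi_{\qf{u}}$ and $\psi_{q'}$; since $\psi_{\qf{u}}$ is an isomorphism, $\rad(q)=\rad(q')$, so $\rk\rad(q')=\rk\rad(q)\leq 1$.

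Finally, I would apply Lemma~\ref{lem:simple_local} to the rank-$(n-1)$ form $q'$ over the local ring $R$ to obtain a diagonalization $q'\isom \qf{u_2,\dotsc,u_{n-1},\pi}$ with $u_i\in R\mult$ and $\pi\in R$; concatenating with $\qf{u}$ yields $q\isom\qf{u,u_2,\dotsc,u_{n-1},\pi}$. The only point requiring care is the orthogonal splitting off of $\qf{u}$ in the first step, and even that is essentially automatic given that both $2$ and $u$ are units in $R$; the rest of the argument is formal.
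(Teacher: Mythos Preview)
Your proof is correct and follows essentially the same approach as the paper: split off $\qf{u}$ as a regular rank-one summand, observe that the orthogonal complement still has simple degeneration, and then diagonalize the complement. The only cosmetic difference is that the paper phrases the last step as ``by induction'' rather than as a single direct appeal to Lemma~\ref{lem:simple_local}.
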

\begin{proof}
If $q(v) = u$ for some $v \in E$, then $q$ restricted to the submodule
$Rv \subset E$ is regular, hence $(E,q)$ splits as $(R,\qf{u})\perp
(Rv^{\perp},q|_{Rv^{\perp}})$.  Since $(Rv^{\perp},q|_{Rv^{\perp}})$
has simple degeneration, we are done by induction.  
\end{proof}

\begin{cor}
\label{cor:injectivity}
Let $R$ be a semilocal principal ideal domain with 2 invertible and
$K$ its fraction field.  If quadratic forms $q$ and $q'$ with simple
degeneration and multiplicity one over $R$ are isometric over $K$,
then they are isometric over $R$.
\end{cor}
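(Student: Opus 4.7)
The plan is to induct on the rank $n$ of $q$ (which equals that of $q'$). The two essential ingredients are Proposition~\ref{prop:local_representation}, which lifts the representation of a unit from $K$ to $R$ in the presence of simple degeneration, and Witt cancellation for regular forms over the field $K$.

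For the base case $n=1$, write $q \isom \qf{a}$ and $q' \isom \qf{b}$ with $a,b \in R \bslash \{0\}$. The isometry over $K$ gives $b = c^2 a$ for some $c \in K\mult$. The multiplicity-one hypothesis forces $a$ and $b$ to have the same valuation (namely $0$ or $1$) at each maximal ideal of $R$, so $c$ has valuation zero at every localization of $R$, hence $c \in R\mult$, and therefore $q \isom q'$ over $R$.

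For $n \geq 2$, the local normal form of Lemma~\ref{lem:simple_local} at each maximal ideal of $R$ produces a unit value of $q$ at each localization, which the Chinese Remainder Theorem assembles into a single unit $u \in R\mult$ represented by $q$ globally. Then $q'$ represents $u$ over $K$, and hence over $R$ by Proposition~\ref{prop:local_representation}. A vector $v$ in the underlying module of $q'$ with $q'(v) = u$ satisfies $b_{q'}(v,v) = 2u \in R\mult$, so $v$ is primitive and the line it spans is a regular orthogonal direct summand: $q' \isom \qf{u} \perp q_1'$ over $R$, and likewise $q \isom \qf{u} \perp q_1$. Witt cancellation over the field $K$ yields $q_1 \isom q_1'$ over $K$. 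Since $\qf{u}$ is regular, splitting it off does not alter the local radical, so $q_1$ and $q_1'$ inherit simple degeneration of multiplicity one over $R$. The induction hypothesis now gives $q_1 \isom q_1'$ over $R$, whence $q \isom q'$ over $R$.

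The substantive content is already carried by Proposition~\ref{prop:local_representation}, so no further obstacle should appear; the only mild subtleties are the Chinese Remainder assembly of the local unit values of $q$ into a single globally represented unit $u \in R\mult$, and the routine verification that the smaller forms $q_1$ and $q_1'$ still satisfy the hypotheses of the corollary at the next stage of the induction.
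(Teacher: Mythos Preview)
Your proof is correct, and it is genuinely more direct than the paper's. Both arguments induct on the rank and both rest on Proposition~\ref{prop:local_representation}, but the paper carries out the induction separately over each discrete valuation overring $R_i$, obtains $q|_{\wh R}\cong q'|_{\wh R}$, and then invokes the Eichler-type factorization of Proposition~\ref{prop:factorization} a second time to patch the completion and generic isometries into an isometry $q\perp h\cong q'\perp h$ over $R$, finally cancelling $h$ via Corollary~\ref{cor:cancel_unit}. You instead run the induction globally over $R$: the Chinese Remainder step (lifting residue-field vectors with nonzero value to a single $v\in V$ with $q(v)\in R^\times$) lets you split off a common regular summand $\qf{u}$ over $R$ itself, so no local-to-global patching is needed at this stage. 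Your approach is shorter and uses Proposition~\ref{prop:factorization} only implicitly (inside the proof of Proposition~\ref{prop:local_representation}); the paper's approach showcases the patching technique a second time, which is consistent with its use elsewhere in the section but is not logically necessary here. The minor points you flag---the CRT assembly, and that the orthogonal complements $q_1,q_1'$ inherit simple degeneration of multiplicity one because splitting off a regular rank-one summand leaves the residue-field radicals and the valuations of the discriminant unchanged---are routine as you say.
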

\begin{proof}
Any quadratic form $q$ with simple degeneration and multiplicity one
has discriminant $\pi \in R/R\mult{}^2$ given by a parameter.  Since
$R\mult/R\mult{}^2 \to K\mult/K\mult{}^2$ is injective, if $q'$ is
another quadratic form with simple degeneration and multiplicity one,
such that $q|_K$ is isomorphic to $q|_K'$, then $q$ and $q'$ have the
same discriminant.  

Over each discrete valuation overring $R_i$ of $R$, we thus have
diagonalizations,
$$
q|_{R_i} \isom \qf{ u_1, \dotsc, u_{r-1}, u_1 \dotsm u_{r-1} \pi_i },
\quad 
q'|_{R'_i} \isom \qf{ u_1', \dotsc, u_{r-1}', u_1' \dotsm
u_{r-1}' \pi_i },
$$
for a suitable parameter $\pi_i$ of $R_i$, where $u_j, u_j' \in
R_i\mult$.  Now, since $q|_K$ and $q'|_K$ are isometric, $q'|_K$
represents $u_1$ over $K$, hence by
Proposition~\ref{prop:local_representation}, $q'|_{R_i}$ represents
$u_1$ over $R_i$. Hence by Lemma~\ref{lem:diag}, we have a further
diagonalization
$$
q'|_{R_i} \isom \qf{ u_1, u_2', \dotsc, u_{r-1}', u_1 u_2' \dotsm u_{r-1}'\pi_i }
$$
with possibly different units $u_i'$.  By cancellation over $K$, we
have $\qf{ u_2, \dotsc, u_{r-1}, u_1 \dotsm u_{r-1} \pi_i }|_K \isom
\qf{ u_2', \dotsc, u_{r-1}', u_1' \dotsm u_{r-1}' \pi_i }|_K$.  By an
induction hypothese over the rank of $q$, we have that $\qf{ u_2,
\dotsc, u_{r-1}, u_1 \dotsm u_{r-1} \pi_i } \isom \qf{ u_2', \dotsc,
u_{r-1}', u_1' \dotsm u_{r-1}' \pi_i }$ over $R$.  By induction, we
have the result over each $R_i$.   

Thus $q|_{\wh{R}} \isom
q'|_{\wh{R}}$ over $\wh{R} = \prod_i \wh{R}_i$.  Consider the induced
isometry $\psi^{\wh{R}} : (q\perp h)|_{\wh{R}} \isom (q'\perp
h)|_{\wh{R}}$ as well as the isometry $\psi^K : (q\perp h)|_{K} \isom
(q'\perp h)|_{K}$ induced from the given one.  By Proposition
\ref{prop:factorization}, there exists $\theta^{\wh{R}} \in
\Orth(q\perp h)(\wh{R})$ and $\theta^K \in \Orth(q\perp h)(K)$ such
that $\psi^{\wh{R}}{}\inv \psi^{K}{} = \theta^{\wh{R}}\theta^K{}\inv$.
The isometries $\psi^{\wh{R}}\theta^{\wh{R}}$ and $\psi^K \theta^K$
then agree over $\wh{K}$ and so patch to yield an isometry $\psi : q
\perp h \isom q' \perp h$ over $R$.  By Corollary
\ref{cor:cancel_unit}, we then have an isometry $q \isom q'$.
\end{proof}

\begin{remark}
Let $R$ be a semilocal principal ideal domain with 2 invertible,
closed fiber $D$, and fraction field $K$. Let $\QF^D(R)$ be the set of
isometry classes of quadratic forms on $R$ with simple degeneration of
multiplicity one along $D$.  Corollary~\ref{cor:injectivity} says that
$\QF^D(R) \to \QF(K)$ is injective, which can be viewed as an analogue
of the Grothendieck--Serre conjecture for the (nonreductive)
orthogonal group of a quadratic form with simple degeneration of
multiplicity one over a discrete valuation ring.
\end{remark}

\begin{cor}
\label{prop:injectivity_sim}
Let $R$ be a complete discrete valuation ring with 2 invertible and
$K$ its fraction field.  If quadratic forms $q$ and $q'$ of even rank
$n=2m \geq 4$ with simple degeneration and multiplicity one over $R$
are similar over $K$, then they are similar.
\end{cor}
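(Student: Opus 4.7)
The plan is to show that any $\mu \in K\mult$ implementing an isometry $q \isom \mu q'$ over $K$ necessarily has even valuation, so that after absorbing an even power of $\pi$ we reduce to an isometry $q \isom v q'$ over $K$ for some unit $v \in R\mult$, at which point Corollary~\ref{cor:injectivity} finishes the argument.

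Fix a uniformizer $\pi$ of $R$ and write $\mu = v \pi^k$ with $v \in R\mult$ and $k \in \Z$. By Lemma~\ref{lem:diag}, we diagonalize $q \isom \qf{u_1, \dotsc, u_{n-1}, u_n \pi}$ and $q' \isom \qf{u'_1, \dotsc, u'_{n-1}, u'_n \pi}$ over $R$ with $u_i, u'_i \in R\mult$. Over $K$, using that $\pi^2 \in K\mult{}^2$, the form $\mu q'$ is isometric to $\qf{v u'_1, \dotsc, v u'_{n-1}, v u'_n \pi}$ when $k$ is even, and to $\qf{v u'_1 \pi, \dotsc, v u'_{n-1} \pi, v u'_n}$ when $k$ is odd.

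The main step is to rule out the odd case.  We invoke Springer's theorem for the complete discretely valued field $K$ (with residue characteristic $\neq 2$), which asserts that every regular quadratic form over $K$ admits a unique (up to isometry over $R$) orthogonal decomposition $q_0 \perp \pi q_1$ with $q_0$ and $q_1$ regular unit-diagonal forms over $R$; in particular, the ranks of $q_0$ and $q_1$ are invariants of the $K$-isometry class.  The form $q$ has rank distribution $(n-1, 1)$, while by the preceding paragraph $\mu q'$ has rank distribution $(n-1, 1)$ if $k$ is even and $(1, n-1)$ if $k$ is odd.  Since $n \geq 4$ gives $n - 1 \geq 3 > 1$, the hypothesis that $q$ and $\mu q'$ are isometric over $K$ forces $k$ to be even.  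This parity obstruction is the heart of the argument, and also explains the hypothesis $n \geq 4$: for $n = 2$ both rank distributions collapse to $(1,1)$ and no such obstruction exists.

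Write $k = 2j$.  The $K$-linear automorphism of $V' \tensor_R K$ given by $w \mapsto \pi^j w$ satisfies $(v q')(\pi^j w) = v \pi^{2j} q'(w) = \mu q'(w)$, hence realizes an isometry $\mu q' \isom v q'$ over $K$.  Composing with the given isometry yields $q \isom v q'$ over $K$.  Since $v \in R\mult$, the form $v q'$ is a quadratic form over $R$ with simple degeneration of multiplicity one, and Corollary~\ref{cor:injectivity} produces an isometry $q \isom v q'$ over $R$, which constitutes the desired similarity between $q$ and $q'$ over $R$ with similarity factor $v$.
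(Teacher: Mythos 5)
Your proof has a genuine gap in the ``main step.''  You assert that in Springer's decomposition $q_0 \perp \pi q_1$ over the complete discretely valued field $K$, the individual ranks of $q_0$ and $q_1$ are invariants of the $K$-isometry class.  This is false.  Springer's theorem gives that the Witt classes of the residue forms $\ol{q}_0$ and $\ol{q}_1$ in $W(k)$ are invariants (and, together with the total rank, determine $q$ up to isometry over $K$), but the individual ranks are not preserved by isometry: hyperbolic planes can migrate between the unit part and the $\pi$-part.  The simplest instance is $\qf{1,-1} \isom \qf{\pi,-\pi}$ over $K$, with rank distributions $(2,0)$ and $(0,2)$.  More to the point for your argument, take $k$ algebraically closed, $R=k[[t]]$, $q=\qf{1,1,1,t}$; then $tq|_K \isom \qf{t,t,t,1} \isom q|_K$, so the similarity factor $t$ with odd valuation genuinely arises, and $\qf{1,1,1,t}$ is isometric over $K$ to $\qf{t,t,t,1}$, of rank distributions $(3,1)$ and $(1,3)$.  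So the odd case cannot be ruled out; the hypothesis $n\geq 4$ is not there to make a parity obstruction available.

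The paper's proof does not try to exclude odd $e$; it handles it.  In the odd case one writes $q \isom q_1\perp \qf{a\pi}$ with $q_1$ regular, compares first residues to find that $\ol{q}_1$ and $\qf{\ol{bu}}$ agree in $W(k)$, and then uses completeness of $R$ to conclude that $q_1$ splits off $m-1$ hyperbolic planes, giving $q_1 \isom h^{m-1}\perp\qf{(-1)^{m-1}a}$.  From this one reads off that $(-1)^{m-1}\pi$ is itself a similarity factor of $q|_K$, which converts the odd-valuation similarity factor $u\pi$ into the unit factor $(-1)^{m-1}u$, and then Corollary~\ref{cor:injectivity} applies as in your even case.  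Your treatment of the even case is correct and matches the paper's; you need to replace the appeal to a nonexistent rank-distribution invariant with the completeness-and-hyperbolic-splitting argument (or an equivalent one) to dispose of the odd case.
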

\begin{proof}
Let $\psi : q|_K \similar q'|_K$ be a similarity with factor $\lambda
= u \pi^e$ where $u \in R\mult$ and $\pi$ is a parameter whose square
class we can assume is the discriminant of $q$ and $q'$.  If $e$ is
even, then $\pi^{e/2}\psi : q|_K \similar q'|_K$ has factor $u$, so
defines an isometry $q|_K \isom u q'|_K$.  Hence by
Corollary~\ref{cor:injectivity}, there is an isometry $q \isom u q'$,
hence a similarity $q \similar q'$.  If $e$ is odd, then
$\pi^{(e-1)/2}\psi^{K}$ defines an isometry $q|_K \isom u\pi q'|_K$.
Writing $q \isom q_1 \perp \qf{a\pi}$ and $q' = q_1' \perp \qf{b\pi}$
for regular quadratic forms $q_1$ and $q_1'$ over $R$ and $a,b \in
R\mult$, then $\pi q|_K \isom u\pi q'|_K \isom u\pi q_1' \perp
\qf{bu}$.  Comparing first residues, we have that $\ol{q}_1$ and
$\qf{\ol{bu}}$ are equal in $W(k)$, where $k$ is the residue field of
$R$.  
Since $R$ is complete, $q_1$ splits off the requisite number of
hyperbolic planes, and so $q_1 \isom h^{m-1} \perp \qf{(-1)^{m-1}a}$.
Now note that $(-1)^{m-1}\pi$ is a similarity factor of the form
$q|_K$.  Finally, we have $(-1)^{m-1}\pi q|_K \isom q|_K \isom u\pi
q'|_K$, so that $q|_K \isom (-1)^{m-1}u q'|_K$.  Thus by
Corollary~\ref{cor:injectivity}, $q \isom (-1)^mu q'$ over $R$, so
that there is a similarity $q \similar q'$ over $R$.
\end{proof}

We need the following relative version of Theorem~\ref{thm:isom}.

\begin{prop}
\label{prop:rank_4_sim}
Let $R$ be a semilocal principal ideal domain with 2 invertible and
$K$ its fraction field.  Let $q$ and $q'$ be quadratic forms of rank 4
over $R$ with simple degeneration and multiplicity one.
Given any $R$-algebra isomorphism $\vp : \CliffB_0(q) \isom
\CliffB_0(q')$
there exists a similarity $\psi : q \similar q'$ such that
$\CliffB_0(\psi)=\vp$.
\end{prop}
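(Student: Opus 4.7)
My strategy is to first produce \emph{some} similarity $\psi_0 : q \similar q'$ over $R$ by a descent argument, and then use Theorem~\ref{thm:isom} to correct $\psi_0$ into a similarity inducing exactly $\vp$. For the correction step, assume $\psi_0$ is at hand. The composition $\vp \circ \CliffB_0(\psi_0)\inv$ is an $\OO_T$-algebra automorphism of $\CliffB_0(q')$, hence an $R$-point of $R_{T/S}\PGL_1(\CliffB_0(q'))$. By Theorem~\ref{thm:isom} applied to $q'$, the map $c : \PGSOrth(q') \to R_{T/S}\PGL_1(\CliffB_0(q'))$ is an $S$-group scheme isomorphism, so this automorphism lifts to a class $[\alpha] \in \PGSOrth(q')(R)$. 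Since $R$ is semilocal, $\Het^1(R,\Gm)=0$, so the central extension $\Gm \to \GSOrth(q') \to \PGSOrth(q')$ is surjective on $R$-points; pick $\alpha \in \GSOrth(q')(R)$ lifting $[\alpha]$. Then $\psi := \alpha \circ \psi_0$ satisfies $\CliffB_0(\psi) = \vp$, possibly after rescaling $\alpha$ by a unit in $R\mult$ (which does not alter the induced Clifford-algebra map, since $c$ factors through $\PGSOrth$).

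The core of the proof is thus the construction of $\psi_0$, obtained by generic-fiber plus completion patching, in parallel to Corollary~\ref{cor:injectivity}. Over $K$, the forms $q|_K$ and $q'|_K$ are regular of rank $4$, and $\vp|_K$ is an isomorphism of their even Clifford algebras, Azumaya over the quadratic \'etale $K$-algebra $T \times_S K$. By the classical exceptional isomorphism ${}^2\Dynkin{A}_1 = \Dynkin{D}_2$ theory over fields (cf.\ \cite[IV.15.B]{book_of_involutions}), two regular rank-$4$ forms with isomorphic even Clifford algebras are similar, so $q|_K \similar q'|_K$; fix any similarity $\psi^K$. At each completion $\wh{R}_i$, Corollary~\ref{prop:injectivity_sim}, applied to the induced generic-fiber similarity, produces $\psi^{\wh{R}_i} : q|_{\wh{R}_i} \similar q'|_{\wh{R}_i}$. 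To glue $\psi^K$ with the $\psi^{\wh{R}_i}$ into a similarity over $R$, we mimic the hyperbolic-plane patching of Corollary~\ref{cor:injectivity}: the discrepancy $\psi^K (\psi^{\wh{R}_i})\inv \in \GSOrth(q')(\wh{K}_i)$ is factored, via a $\GSOrth$-analogue of Proposition~\ref{prop:factorization} applied after stabilization by a hyperbolic plane $h$, into a product of elements of $\GSOrth(q' \perp h)(K)$ and $\GSOrth(q' \perp h)(\wh{R}_i)$. Absorbing each factor into the appropriate side produces similarities that agree over $\wh{K}_i$ and glue to a similarity $q \perp h \similar q' \perp h$ over $R$; cancelling $h$ via Corollary~\ref{cor:cancel_unit} yields the desired $\psi_0$.

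The principal obstacle is the $\GSOrth$-analogue of Proposition~\ref{prop:factorization}. Its proof follows the same Eichler--Witt template: one must show that $\GSOrth(q \perp h)$ is generated by the elementary hyperbolic isometries $\EOrth(q,h)$, by $\Orth(h)$, and by homotheties, and then exploit the density of $K$ in $\wh{K}$. A delicate point is accounting for the parity of the valuation of the similarity factor at each $\wh{R}_i$, in parallel to the case split in the proof of Corollary~\ref{prop:injectivity_sim}: even-valuation factors reduce directly to isometry statements handled by Corollary~\ref{cor:injectivity}, while odd-valuation factors require first exchanging the forms with their parameter-scaled twists before patching.
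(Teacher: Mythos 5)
Your architecture --- first produce \emph{some} similarity $\psi_0$ over $R$ by a generic-fiber-plus-completions patching, then correct it via Theorem~\ref{thm:isom} --- differs from the paper's, and the difference is not cosmetic: it is what creates the gap you yourself flag as the ``principal obstacle.'' The correction step is sound and essentially identical to the paper's (the lift from $\PGSOrth(q')(R)$ to $\GSOrth(q')(R)$ is indeed automatic since $\Pic(R)=0$ for $R$ semilocal). The problem lies entirely in the construction of $\psi_0$.

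The paper chooses $\psi^K$ so that $\CliffB_0(\psi^K)=\vp|_K$ (Theorem~\ref{thm:etale} gives this), and crucially \emph{also} corrects the local similarity $\rho$ over $\wh{R}$ by a $\sigma$ coming from Theorem~\ref{thm:isom} so that $\psi^{\wh{R}}:=\rho\circ\sigma$ induces $\vp|_{\wh{R}}$ as well. The payoff is the decisive observation you do not make: because both $\psi^K$ and $\psi^{\wh{R}}$ induce the same $\vp$, the discrepancy $\psi^K|_{\wh{K}}^{-1}\circ\psi^{\wh{R}}|_{\wh{K}}$ lies in the kernel of $\GSOrth(q)\to R_{T/S}\PGL_1(\CliffB_0(q))$, hence is a homothety $\mu$. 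Its similarity factor is then $\mu^2$, so $\lambda(\psi^K)\in u\cdot(\wh{K}^\times)^2$ with $u\in\wh{R}^\times$ and hence has \emph{even} valuation at every $R_i$. That reduces the patching to the isometry statement Corollary~\ref{cor:injectivity}, and no $\GSOrth$-version of Proposition~\ref{prop:factorization} is needed at all.

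Your route, by contrast, patches before imposing the Clifford constraint, so there is nothing forcing the similarity factor of $\psi^K$ to have even valuation; and the two pieces you then need --- a $\GOrth$/$\GSOrth$-analogue of Proposition~\ref{prop:factorization}, and the ``exchange with parameter-scaled twists'' to handle odd-valuation factors --- are both asserted rather than established, and both are genuinely delicate. For the factorization, the Eichler argument does not carry over verbatim: a similitude of $h$ extended by the identity on $V$ is \emph{not} a similitude of $q\perp h$, and the set of similarity factors of $q\perp h$ over $\wh{R}$ is constrained (it equals the similarity factors of $q$ itself, by cancellation), so the naive generation statement $\GSOrth(q\perp h)=\EOrth(q,h)\cdot\Orth(h)\cdot\Gm$ you invoke is false as stated. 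For the parity, Corollary~\ref{prop:injectivity_sim} is proved over a \emph{single} complete DVR; in the semilocal setting the parity may differ from one $R_i$ to another, and ``exchanging the form with a parameter-scaled twist'' is not a globally defined operation over $R$. The paper's homothety trick is exactly what makes these issues evaporate, and without it your construction of $\psi_0$ does not go through.
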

\begin{proof}
By Theorem~\ref{thm:etale}, there exists a similarity $\psi^K : q
\similar q'$ such that $\CliffB_0(\psi^K) = \vp|_K$.  Thus over
$\wh{R} = \prod_i \wh{R}_i$, Corollary~\ref{prop:injectivity_sim}
applied to each component provides a similarity $\rho : q|_{\wh{R}}
\similar q'|_{\wh{R}}$.  Now $\CliffB_0(\rho)\inv \circ \vp :
\CliffB_0(q)|_{\wh{R}} \isom \CliffB_0(q)|_{\wh{R}}$ is a
${\wh{R}}$-algebra isomorphism, hence by Theorem~\ref{thm:isom}, is
equal to
$\CliffB_0(\sigma)$ for some similarity $\sigma : q|_{\wh{R}} \similar
q|_{\wh{R}}$.  Then $\psi^{\wh{R}} = \rho \circ \sigma : q|_{\wh{R}}
\similar q'|_{\wh{R}}$ satisfies $\CliffB_0(\psi^{\wh{R}}) =
\vp|_{\wh{R}}$.

Let $\lambda \in K\mult$ and $u \in \wh{R}\mult$ be the factor of
$\psi^{K}$ and $\psi^{\wh{R}}$, respectively.  Then
$\psi^K|_{\wh{K}}\inv \circ \psi^{\wh{R}}|_{\wh{K}} : q|_{\wh{K}}
\similar q|_{\wh{K}}$ has factor $u\inv\lambda \in \wh{K}\mult$.  But
since $\CliffB_0(\psi^K|_{\wh{K}}\inv \circ \psi^{\wh{R}}|_{\wh{K}}) =
\id$, we have that $\psi^K|_{\wh{K}}\inv \circ
\psi^{\wh{R}}|_{\wh{K}}$ is given by multiplication by $\mu \in
\wh{K}\mult$.  In particular, $u\inv \lambda = \mu^2$ and thus the
valuation of $\lambda \in K\mult$ is even in every $R_i$.  Thus
$\lambda = v \varpi^2$ with $v \in R\mult$ and so $\varpi \psi$
defines an isometry $q|_K \isom v q'|_K$.  By Corollary
\ref{cor:injectivity}, there's an isometry $\alpha : q \isom v q'$,
i.e., a similarity $\alpha : q \similar q'$.  As before,
$\CliffB_0(\alpha)\inv \circ \vp : \CliffB_0(q) \isom \CliffB_0(q)$ is
a $R$-algebra isomorphism, hence by Theorem~\ref{thm:isom}, is equal
to $\CliffB_0(\beta)$ for some similarity $\beta : q \similar q$.
Then we can define a similarity $\psi = \alpha \circ \beta : q
\similar q'$ over $R$, which 
satisfies $\CliffB_0(\psi) = \vp$.
\end{proof}

Finally, we need the following generalization of
\cite[Prop.~2.3]{colliot-thelene_sansuc:fibres_quadratiques} to the
setting of quadratic forms with simple degeneration.

\begin{prop}
\label{prop:extend}
Let $S$ be the spectrum of a regular local ring $(R,\mm)$ of dimension
$\geq 2$ with 2 invertible and $D \subset S$ a regular divisor.  Let
$(V,q)$ be a quadratic form over $S$ such that $(V,q)|_{S\bslash
\{\mm\} }$ has simple degeneration of multiplicity one along $D\bslash
\{\mm\}$.  Then $(V,q)$ has simple degeneration along $D$ of
multiplicity one.
\end{prop}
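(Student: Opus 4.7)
The approach is a purity argument combined with a depth analysis of the cokernel $M$ of $\psi_q$, using two applications of Auslander--Buchsbaum. Trivializing $\LL$ and setting $k = R/\mm$, the generic regularity of $(V,q)$ gives injectivity of $\psi_q : V \to V\dual$, so
\[
0 \to V \mapto{\psi_q} V\dual \to M \to 0
\]
is a length-one free resolution of $M := \mathrm{coker}(\psi_q)$, whence $\mathrm{pd}_R M \leq 1$.

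First, I would extend the equality of discriminant divisors across $\{\mm\}$. By hypothesis, $\divisor(\det\psi_q) = D$ on $S \bslash \{\mm\}$ with multiplicity one, and since $R$ is regular (hence locally factorial) and $\{\mm\}$ has codimension $\geq 2$, Weil divisors on $S$ are determined by their restriction to $S \bslash \{\mm\}$. Hence $\divisor(\det\psi_q) = D$ on $S$; if $D = \emptyset$ then $\det\psi_q$ is a unit, $q$ is regular on $S$, and the proposition is immediate. Otherwise, let $\pi \in \mm$ be a regular parameter cutting out $D$, so $\det\psi_q = u\pi$ for some $u \in R\mult$. The adjugate identity $\psi_q \cdot \mathrm{adj}(\psi_q) = u\pi \cdot I$ then forces $\pi \cdot M = 0$, making $M$ a finitely generated module over $\OO_D = R/(\pi)$.

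Next, I would apply Auslander--Buchsbaum twice. Since $M$ is nonzero but supported on $D \subsetneq S$, it cannot be $R$-projective, so $\mathrm{pd}_R M = 1$. Auslander--Buchsbaum over $R$ then gives $\mathrm{depth}_R M = \dim R - 1 = \dim \OO_D$, and because $\pi \cdot M = 0$ this equals $\mathrm{depth}_{\OO_D} M$. Thus $M$ is maximal Cohen--Macaulay over the regular local ring $\OO_D$, so a second application of Auslander--Buchsbaum, now over $\OO_D$, makes $M$ free over $\OO_D$. The multiplicity-one hypothesis at the generic point $\eta_D$ of $D$ (via Lemma~\ref{lem:simple_local} applied at $\eta_D$) pins the generic $\OO_D$-rank of $M$ to one, so $M \isom \OO_D$.

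Reducing modulo $\mm$, the fiber $M \tensor_R k$ is one-dimensional, and by the symmetry of $\psi_q$ its fibral kernel and cokernel have equal $k$-dimension; hence $\rk_k \rad(V\tensor_R k, q\tensor_R k) = 1$. Combined with $\divisor(\det\psi_q) = D$, this is exactly simple degeneration of multiplicity one along $D$ on all of $S$. I expect the main technical point to be the depth transfer $\mathrm{depth}_R M = \mathrm{depth}_{\OO_D} M$ together with the two-stage Auslander--Buchsbaum argument; the purity step for divisors on regular schemes and the adjugate computation are both standard.
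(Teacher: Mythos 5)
Your argument is correct, and it reaches the conclusion by a genuinely different route from the paper's. Both proofs begin with the same purity step (the discriminant divisor of $q$ is determined by its restriction away from the codimension-$\geq 2$ locus $\{\mm\}$, so it equals $D$ and is cut out by a regular parameter $\pi \in \mm \bslash \mm^2$). From there the paper argues very directly: if the radical at the closed point had dimension $r \geq 2$, one lifts a basis of the radical to a basis of $V$ and observes that the first $r$ rows of the Gram matrix then lie in $\mm$, forcing $\det M_q \in \mm^r \subset \mm^2$ and contradicting $\det M_q = u\pi$ with $\pi \notin \mm^2$. Your proof instead runs a homological depth analysis on $M = \mathrm{coker}(\psi_q)$: the adjugate identity gives $\pi M = 0$, so $M$ is an $\OO_D$-module; $\mathrm{pd}_R M = 1$ and Auslander--Buchsbaum over $R$ force $\mathrm{depth}_R M = \dim R - 1$; the depth transfer to $\OO_D = R/(\pi)$ and a second application of Auslander--Buchsbaum over the regular local ring $\OO_D$ give $M$ free over $\OO_D$; multiplicity one at the generic point of $D$ pins the rank to $1$, and reducing modulo $\mm$ together with the equality of kernel and cokernel dimensions for the symmetric fibral map finishes. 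Your route is heavier machinery for the same conclusion, but it does extract a cleaner structural fact along the way, namely $\mathrm{coker}(\psi_q) \isom \OO_D$ as an $\OO_D$-module (free of rank one), which is stronger than just bounding the radical at the closed point. The paper's Gram-matrix computation is shorter and more elementary, and it is worth internalizing that argument as well, since it isolates exactly where the hypothesis $\pi \in \mm \bslash \mm^2$ (i.e.\ regularity of $D$ through the closed point) is used.
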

\begin{proof}
First note that the discriminant of $(V,q)$ (hence the subscheme $D$)
is represented by a regular element $\pi \in \mm \bslash \mm^2$.  Now
assume, to get a contradiction, that the radical of
$(V,q)_{\kappa(\mm)}$, where $\kappa(\mm)$ is the residue field at
$\mm$, has dimension $r>1$ and let $\ol{e}_1, \dotsc, \ol{e}_r$ be a
$\kappa(\mm)$-basis of the radical.  Lifting to unimodular elements $e_1,
\dotsc, e_r$ of $V$, we can complete to a basis $e_1, \dotsc, e_n$.
Since $b_q(e_i,e_j) \in \mm$ for all $1 \leq i \leq r$ and $1 \leq j
\leq n$, inspecting the Gram matrix $M_q$ of $b_q$ with respect to
this basis, we find that $\det M_q \in \mm^r$, contradicting the
description of the discriminant above.  Thus the radical of $(V,q)$
has rank 1 at $\mm$ and $(V,q)$ has simple degeneration along $D$.
Similarly, $(V,q)$ also has multiplicity one at $\mm$, hence on $S$ by
hypothesis.
\end{proof}

\begin{cor}
\label{cor:extending_simple_deg}
Let $S$ be a regular integral scheme of dimension $\leq 2$ with 2
invertible and $D$ a regular divisor.  Let $(\EE,q,\LL)$ be a
quadratic form over $S$ that is regular over every codimension 1 point
of $S \bslash D$ and has simple degeneration of multiplicity one over
every codimension one point of $D$.  Then over $S$, $q$ has simple degeneration
along $D$ of
multiplicity one.
\end{cor}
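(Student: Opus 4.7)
The plan is to verify the conclusion pointwise by localizing $S$ at an arbitrary closed point $s$. When $\dim \OO_{S,s} \leq 1$, the closed point $s$ is itself a codimension-$\leq 1$ point, so the required simple degeneration of multiplicity one along $D$ at $s$ is already part of the hypotheses. The substantive case is $\dim \OO_{S,s} = 2$, and there I split according to whether or not $s \in D$; the case $s \in D$ is the one that requires genuine input from the preceding theory.

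If $s \notin D$, then $D$ is empty in $\Spec \OO_{S,s}$ and by hypothesis $(\EE,q,\LL)$ is regular at every height-one prime of $R = \OO_{S,s}$. After a local trivialization, the discriminant becomes an element $d \in R$; it is nonzero because $q$ is generically regular (in fact regular at some height-one prime of $R$), and a unit at each height-one prime. Since $R$ is a two-dimensional regular local ring, hence a UFD, any nonzero element that is a unit at every height-one prime is itself a unit: otherwise a prime factor of $d$ would generate a height-one prime in which $d$ lies. Thus $d \in R\mult$, so $q$ is regular at $s$ and trivially exhibits simple degeneration along the (locally empty) $D$ of multiplicity one.

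If $s \in D$, then the assumptions on codimension-one points of $\Spec \OO_{S,s}$ (regularity outside $D$, simple degeneration of multiplicity one along $D$) say exactly that $(\EE,q,\LL)|_{\Spec \OO_{S,s} \bslash \{s\}}$ has simple degeneration of multiplicity one along $D \bslash \{s\}$. This is precisely the hypothesis of Proposition~\ref{prop:extend}, which applies directly and yields the desired conclusion at $s$. The genuine difficulty — controlling the rank of the radical at the closed point via a Gram-matrix calculation against a regular equation for $D$ — has been absorbed into that proposition; the corollary itself is then a bookkeeping exercise combining the local-at-closed-points reduction with the elementary UFD observation used in the off-$D$ case.
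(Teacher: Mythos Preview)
Your proof is correct and takes essentially the same approach as the paper: both handle closed points of $D$ via Proposition~\ref{prop:extend} and establish regularity off $D$ by a codimension-two purity argument. The only difference is that the paper cites \cite[Prop.~2.3]{colliot-thelene_sansuc:fibres_quadratiques} for the off-$D$ regularity, whereas your direct UFD argument on the discriminant in the two-dimensional regular local ring is more elementary and self-contained.
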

\begin{proof}
Let $U=S\bslash D$.  The quadratic form $q|_U$ is regular except
possibly at finitely many closed points.  But regular quadratic forms
over the complement of finitely many closed points of a regular
surface extend uniquely by
\cite[Prop.~2.3]{colliot-thelene_sansuc:fibres_quadratiques}.  Hence
$q|_U$ is regular.  The restriction $q|_{D}$ has simple degeneration
at the generic point of ${D}$, hence along the complement of finitely
many closed points of ${D}$.  At each of these closed points, $q$ has
simple degeneration by Proposition~\ref{prop:extend}.  Thus $q$ has
simple degeneration along ${D}$.
\end{proof}

\section{Gluing tensors}
\label{sec:gluing}

In this section, we reproduce some results on gluing (or patching)
tensor structures on vector bundles communicated to us by M.\
Ojanguren and inspired by Colliot-Th\'el\`ene--Sansuc
\cite[\S2,~\S6]{colliot-thelene_sansuc:fibres_quadratiques}.  As usual,
any scheme $S$ is assumed to be noetherian.

\begin{lemma}
\label{lem:top}
Let $S$ be a scheme of dimension $n$, $U \subset S$ a dense open
subset, $x \in S \bslash U$ a point of codimension 1 of $S$, $V
\subset S$ an open neighborhood of $x$, and $W \subset U \cap V$ a
dense open subset of $S$.  Then there exists an open neighborhood $V'$
of $x$ such that $V' \cap U \subset W$.
\end{lemma}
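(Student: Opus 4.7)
The plan is to reduce the statement to a simple set-theoretic manipulation on the irreducible decomposition of $Z := S \setminus W$, using noetherianity of $S$ to ensure finiteness and the codimension-one hypothesis on $x$ to pin down which components can contain $x$. Concretely, I would take $V'$ to be $V$ with the irreducible components of $Z$ that miss $x$ removed.

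First, since $W$ is dense in $S$ and $S$ is noetherian, $Z$ is closed and admits a finite irreducible decomposition $Z = C_1 \cup \cdots \cup C_m$ with each $C_i$ of codimension $\geq 1$ in $S$. Observe that $x \in Z$: indeed $W \subseteq U$ and $x \notin U$, so $x \notin W$. Hence $x \in C_i$ for some $i$, and $\overline{\{x\}} \subseteq C_i$.

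The key step is to show that exactly one $C_i$ contains $x$, and that this $C_i$ equals $\overline{\{x\}}$. Let $\eta_i$ denote the generic point of $C_i$. If $x \neq \eta_i$, then $x$ is a proper specialization of $\eta_i$, so in $\mathcal{O}_{S,x}$ the prime corresponding to $\eta_i$ is properly contained in the maximal ideal; this forces $\dim \mathcal{O}_{S,\eta_i} < \dim \mathcal{O}_{S,x} = 1$, i.e.\ $C_i$ would have codimension $0$ in $S$, contradicting density of $W$. Thus $x = \eta_i$ and $C_i = \overline{\{x\}}$, and the same argument applied to any other component through $x$ shows that $C_i$ is the unique such component.

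Now set $V' := V \setminus \bigcup_{j \neq i} C_j$. This is open in $V$, hence in $S$, and it contains $x$ by construction. To verify $V' \cap U \subseteq W$, suppose for contradiction that $y \in V' \cap U$ with $y \notin W$. Then $y \in Z$, so $y \in C_k$ for some $k$, and membership in $V'$ forces $k = i$, so $y \in C_i = \overline{\{x\}}$. But $S \setminus U$ is closed and contains $x$, so $\overline{\{x\}} \subseteq S \setminus U$, contradicting $y \in U$. This establishes the desired inclusion.

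The main obstacle is the codimension comparison identifying $C_i$ with $\overline{\{x\}}$, which relies on the codimension-one hypothesis on $x$ together with density of $W$; the rest is formal manipulation of the noetherian decomposition of $Z$.
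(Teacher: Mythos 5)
Your proof is correct. Both you and the paper work with the finite irreducible decomposition of $Z = S \setminus W$, but you discard a different set of components. The paper splits the components of $Z$ into those whose generic points lie in $U$ (forming $Z_2$) and those whose generic points do not (forming $Z_1$, which contains $\ol{\{x\}}$), and removes only $Z_2$; the resulting $V'' = S \setminus Z_2$ satisfies the equality $V'' \cap U = W$, using that $Z_1 \cap U = \varnothing$ because the generic points of $Z_1$ avoid $U$ and $U$ is stable under generization. You instead discard every component except $\ol{\{x\}}$, which gives a smaller open set and only the one-sided inclusion $V' \cap U \subseteq W$; that inclusion is all the lemma asserts, so this is fine, and it lets you conclude directly from $\ol{\{x\}} \subseteq S \setminus U$ without needing the observation that components of $Z_1$ are disjoint from $U$. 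A point in your favor: you spell out, via the codimension comparison between $x$ and the generic point of the component containing it, why $\ol{\{x\}}$ must itself be an irreducible component of $Z$ (and the unique one through $x$), whereas the paper simply lists $\ol{\{x\}}$ among the components without justification; that step is where the codimension-one hypothesis on $x$ is actually used, and your write-up makes this explicit.
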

\begin{proof}
The closed set $Z = S \bslash W$ is of dimension $n-1$ and contains
$x$, hence has a decomposition $Z = Z_1 \cup Z_2$, where
$$
Z_1 = \ol{\{x\}} \cup \ol{\{x_1\}} \cup \dotsm \cup \ol{\{x_r\}},
\qquad 
Z_2 = \ol{\{y_1\}} \cup \dotsm \cup \ol{\{y_s\}}
$$
into closed irreducible sets with distinct generic points, where
$x,x^1,\dotsc,x^r \notin U$ and $y_1,\dotsc,y_s \in U$. Let $F =
\{y_1\} \cup \dotsm \cup \{ y_s\}$ and $V'' = S \bslash F$.  Note that
$W \subset V''$.  Since $Z_1 \cap U = \varnothing$, we have that $U
\cap F = U \cap Z$.  This shows that $V'' \cap U = U \bslash (U \cap
Z) = U \cap (X \bslash Z) = U \cap W = W$.  Thus we can take $V' = V
\cap V''$.
\end{proof}

Let $\VV$ be a locally free $\OO_S$-module (of finite rank).  A
\linedef{tensorial construction} $t(\VV)$ in $\VV$ is any locally free
$\OO_S$-module that is a tensor product of modules $\exterior^j(\VV)$,
$\exterior^j(\VV\dual)$, $S^j(\VV)$, or $S^j(\VV\dual)$.  Let $\LL$ be
a line bundle on $S$.  An \linedef{$\LL$-valued tensor} $(\VV,q,\LL)$
of \linedef{type} $t(\VV)$ on $S$ is a global section $q \in
\Gamma(S,t(\VV)\tensor\LL)$ for some tensorial construction $t(\VV)$
in $\VV$.  For example, an $\LL$-valued quadratic form is an
$\LL$-valued tensor of type $t(\VV) = S^2(\VV\dual)$; an
$\OO_S$-algebra structure on $\VV$ is an $\OO_S$-valued tensor of type
$t(\VV) = \VV\dual \tensor \VV\dual \tensor \VV$.  If $U \subset S$ is
an open set, denote by $(\VV,q,\LL)|_U = (\VV|_U,q|_U,\LL|_U)$ the
restricted tensor over $U$.  If $D \subset S$ is a closed subscheme,
let $\OO_{S,D}$ denote the semilocal ring at the generic points of $D$
and $(\VV,q,\LL)|_D = (\VV,q,\LL)\tensor_{\OO_S}\OO_{S,D}$ the
associated tensor over $\OO_{S,D}$.  If $S$ is integral and $K$ its
function field, we write $(\VV,q,\LL)|_K$ for the stalk at the generic
point.  

A \linedef{similarity} between line bundle-valued tensors
$(\VV,q,\LL)$ and $(\VV',q',\LL')$ consists of a pair $(\vp,\lambda)$
where $\vp : \VV \isom \VV'$ and $\lambda : \LL \isom \LL'$ are
$\OO_S$-module isomorphisms such that $t(\vp)\tensor\lambda :
t(\VV)\tensor\LL \isom t(\VV')\tensor\LL'$ takes $q$ to $q'$.  A
similarity is an \linedef{isomorphism} if $\LL=\LL'$ and
$\lambda=\id$.

\begin{prop}
\label{prop:glue}
Let $S$ be an integral scheme, $K$ its function field, $U \subset S$ a
dense open subscheme, and $D \subset S \bslash U$ a closed subscheme
of codimension 1.  Let $(\VV^U,q^U,\LL^U)$ be a tensor over $U$,
$(\VV^D,q^D,\LL^D)$ a tensor over $\OO_{S,D}$, and $(\vp,\lambda) :
(\VV^U,q^U,\LL^U)|_K \similar (\VV^D,q^D,\LL^D)|_K$ a similarity of
tensors over $K$.  Then there exists a dense open set $U' \subset S$
containing $U$ and the generic points of $D$ and a tensor
$(\VV^{U'},q^{U'},\LL^{U'})$ over $U'$ together with similarities
$(\VV^U,q^U,\LL^U) \isom (\VV^{U'},q^{U'},\LL^{U'})|_U$ and
$(\VV^D,q^D,\LL^D) \isom (\VV^{U'},q^{U'},\LL^{U'})|_D$.  A
corresponding statement holds for isomorphisms of tensors.
\end{prop}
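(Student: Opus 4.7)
The plan is to spread out the tensor $(\VV^D,q^D,\LL^D)$ from the semilocal ring $\OO_{S,D}$ to an honest open neighborhood of the generic points of $D$, spread out the similarity $(\vp,\lambda)$ from $K$ to a dense open subset, and then glue using Lemma~\ref{lem:top} as the crucial shrinking device so that the overlap of the two patches lands inside the locus on which the similarity is defined.

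More precisely, I would first write $\Spec\OO_{S,D} = \varprojlim V$, where $V$ ranges over open neighborhoods in $S$ of the (finitely many) generic points $\eta_1,\dotsc,\eta_r$ of $D$. By the standard limit formalism for finitely presented data (cf.\ EGA~IV, \S8), there exists such an open $V$ over which $(\VV^D,q^D,\LL^D)$ is the restriction of a locally free module $\tilde\VV$, a line bundle $\tilde\LL$, and a section $\tilde q \in \Gamma(V,t(\tilde\VV)\tensor\tilde\LL)$. Shrinking $V$ if necessary (but still keeping the $\eta_i$), we may further assume that the generic-point isomorphisms $\vp$ and $\lambda$ spread out to $\OO_{U\cap V}$-module isomorphisms $\vp^W : \VV^U|_W \isom \tilde\VV|_W$ and $\lambda^W : \LL^U|_W \isom \tilde\LL|_W$ on some dense open $W\subset U\cap V$; since the identity $(t(\vp^W)\tensor\lambda^W)(q^U|_W)=\tilde q|_W$ is an equality of sections of a coherent sheaf which holds at the generic point, it holds on a dense open, which we absorb into $W$.

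Next, I invoke Lemma~\ref{lem:top} at each generic point $\eta_i$ of $D$ (a codimension one point of $S$, lying in $V\smallsetminus U$) to produce an open neighborhood $V_i'$ of $\eta_i$ with $V_i'\cap U \subset W$. Setting $V' = V\cap V_1'\cap\dotsb\cap V_r'$, the open $V'$ still contains every $\eta_i$ and satisfies $V'\cap U\subset W$. On $U'=U\cup V'$ I now have two pieces of data, $(\VV^U,q^U,\LL^U)$ on $U$ and $(\tilde\VV,\tilde q,\tilde\LL)|_{V'}$ on $V'$, whose restrictions to the overlap $U\cap V'$ are matched by the similarity $(\vp^W,\lambda^W)$. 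Standard sheaf gluing then produces a locally free $\OO_{U'}$-module $\VV^{U'}$ and a line bundle $\LL^{U'}$ with the prescribed restrictions; the sections $q^U$ and $\tilde q|_{V'}$ agree on $U\cap V'$ under these identifications, hence glue to $q^{U'}\in\Gamma(U',t(\VV^{U'})\tensor\LL^{U'})$, yielding the required tensor $(\VV^{U'},q^{U'},\LL^{U'})$ together with the canonical similarities to the original data. In the isomorphism case one simply takes $\lambda=\id$ throughout and the identical argument produces $\LL^{U'}$ with $\LL^{U'}|_U=\LL^U$ and $\LL^{U'}|_{V'}=\tilde\LL|_{V'}$.

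The main technical obstacle is precisely the shrinking step: without Lemma~\ref{lem:top}, an open neighborhood of $\eta_i$ in $V$ might meet $U$ outside of the locus where the similarity spreads out, and one would be unable to glue. Once Lemma~\ref{lem:top} is available, the proof reduces to routine application of the limit formalism for coherent sheaves and the standard fact that $\OO$-module isomorphisms, and equalities of sections, spread out from the generic point to a dense open.
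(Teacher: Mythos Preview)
Your proposal is correct and follows essentially the same approach as the paper: spread out the tensor over $\OO_{S,D}$ to an open neighborhood $V$, spread out the generic similarity to a dense open $W\subset U\cap V$, invoke Lemma~\ref{lem:top} to shrink $V$ to $V'$ with $V'\cap U\subset W$, and glue on $U'=U\cup V'$. The only cosmetic difference is that the paper first reduces by induction to the case where $D$ is irreducible (so a single application of Lemma~\ref{lem:top} suffices), whereas you apply Lemma~\ref{lem:top} once at each generic point $\eta_i$ and intersect the resulting neighborhoods; both variants work equally well.
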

\begin{proof}
By induction on the number of irreducible components of $D$, gluing
over one at a time, we can assume that $D$ is irreducible.  Choose an
extension $(\VV^V,q^V,\LL^V)$ of $(\VV^D,q^D,\LL^D)$ to some open
neighborhood $V$ of $D$ in $S$.  Since $(\VV^V,q^V,\LL^V)|_K
\similar (\VV^U,q^U,\LL^U)|_K$, there exists an open subscheme $W
\subset U \cap V$ over which $(\VV^V,q^V,\LL^V)|_W \similar
(\VV^U,q^U,\LL^U)|_W$.  By Lemma~\ref{lem:top}, there exists an open
neighborhood $V' \subset S$ of $D$ such that $V' \cap U \subset W$.
We can glue $(\VV^U,q^U,\LL^U)$ and $(\VV^{V'},q^{V'},\LL^{V'})$ over $U \cap
V'$ to get a tensor $(\VV^{U'},q^{U'},\LL^{U'})$ over $U'$ extending
$(\VV^U,q^U,\LL^U)$, where $U' = U \cup V'$.  But $U'$ contains the
generic points of $D$
and we are done.
\end{proof}

For an open subscheme $U \subset S$, a closed subscheme $D \subset S
\bslash U$ of codimension 1, a \linedef{similarity gluing datum}
(resp.\ \linedef{gluing datum}) is a triple
$((\VV^U,q^U,\LL^U),(\VV^D,q^D,\LL^D),\vp)$ consisting of a tensor
over $U$, a tensor over $\OO_{S,D}$, and a similarity (resp.\ an
isomorphism) of tensors $(\vp,\lambda) : (\VV^U,q^U,\LL^U)|_K \similar
(\VV^D,q^D,\LL^D)|_K$ over $K$.  There is an evident notion of
isomorphism between two (similarity) gluing data.  Two isomorphic
gluing data yield, by Proposition~\ref{prop:glue}, tensors
$(\VV^{U'},q^{U'},\LL^{U'})$ and $(\VV^{U''},q^{U''},\LL^{U''})$ over
open dense subsets $U', U'' \subset S$ containing $U$ and the generic
points of $D$ such that there
is an open dense refinement $U''' \subset U' \cap U''$ over which we
have $(\VV^{U'},q^{U'},\LL^{U'})|_{U'''} \similar
(\VV^{U''},q^{U''},\LL^{U''})|_{U'''}$.

Together with results of
\cite{colliot-thelene_sansuc:fibres_quadratiques}, we get a well-known
result---\linedef{purity for division algebras} over surfaces---which
we state in a precise way, due to Ojanguren, that is conducive to our
usage.  If $K$ is the function field of a regular scheme $S$, we say
that $\beta \in \Br(K)$ is \linedef{unramified} if its in the image of
the injection $\Br(S) \to \Br(K)$.

\begin{theorem}
\label{thm:purity}
Let $S$ be a regular integral scheme of dimension $\leq 2$, $K$ its
function field, $D \subset S$ a closed subscheme of codimension 1, and
$U = S \bslash D$.
\begin{enumerate}
\item If $\AA^U$ is an Azumaya $\OO_U$-algebra such that $\AA^U|_K$ is
unramified along $D$ then there exists an Azumaya $\OO_S$-algebra
$\AA$ such that $\AA|_U \isom \AA^U$.

\item If a central simple $K$-algebra $A$ has Brauer class unramified
over $S$, then there exists an Azumaya $\OO_S$-algebra $\AA$ such that
$\AA|_K \isom A$.
\end{enumerate}
\end{theorem}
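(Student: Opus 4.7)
My plan is to use the gluing formalism of Section~\ref{sec:gluing} to extend across the codimension-$1$ locus $D$, then to apply extension of locally free tensors across codimension-$\geq 2$ subsets on regular $2$-dimensional schemes (in the spirit of \cite[Prop.~2.3]{colliot-thelene_sansuc:fibres_quadratiques}) to reach all of $S$; part (b) will follow from part (a) by a spreading-out argument.

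For part (a), I would set $A = \AA^U|_K$ and, at each generic point of a component $D'$ of $D$, take $\BB_{D'} \subset A$ to be a maximal $\OO_{S,D'}$-order. Since $[A]$ is unramified at $\OO_{S,D'}$ by hypothesis, the classical theory of maximal orders over discrete valuation rings guarantees that $\BB_{D'}$ is an Azumaya algebra of the same degree as $A$, and the inclusion $\BB_{D'} \subset A$ identifies $\BB_{D'}|_K$ with $A$. Assembling these across the finitely many components of $D$ yields an Azumaya $\OO_{S,D}$-algebra $\BB^D$ and a canonical $K$-algebra isomorphism $\vp : \BB^D|_K \isom \AA^U|_K$. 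Applying Proposition~\ref{prop:glue} to this algebra gluing datum produces an $\OO_{U'}$-algebra $\AA^{U'}$ on a dense open $U' \subset S$ containing $U$ and the generic points of $D$, with $\AA^{U'}|_U \isom \AA^U$; since the Azumaya locus is open, I may shrink $U'$ so that $\AA^{U'}$ is Azumaya throughout. The complement $S \bslash U'$ then has codimension $\geq 2$, consisting of finitely many closed points, and both the underlying locally free module and the multiplication tensor extend uniquely across this locus, producing a locally free $\OO_S$-algebra $\AA$ with $\AA|_{U'} \isom \AA^{U'}$. To check that $\AA$ is Azumaya everywhere, I would observe that the natural map $\AA \tensor \AA^{\text{op}} \to \EEnd(\AA)$ is a map of locally free $\OO_S$-modules of the same rank that is an isomorphism on $U'$, so its determinant is a nonzero section of a line bundle whose zero locus is, by Krull's principal ideal theorem, a closed subscheme of pure codimension $1$; being contained in $S \bslash U'$ of codimension $\geq 2$, it must be empty.

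For part (b), I would spread out $A$ to an Azumaya $\OO_{U_0}$-algebra $\AA^{U_0}$ on a dense open $U_0 \subset S$ with $\AA^{U_0}|_K \isom A$ (the Azumaya locus is open and contains the generic point of $S$, since $A$ is a central simple algebra over $K$), and apply part (a) with $D = S \bslash U_0$. The principal technical obstacle is arranging that the local extensions $\BB_{D'}$ have the same degree as $A$, not merely that they lie in the correct Brauer class; the maximal-order argument over discrete valuation rings is what makes the gluing machinery work directly with $\OO_S$-algebra structures, rather than only at the level of Brauer classes.
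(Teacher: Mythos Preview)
Your proof is correct and follows the same overall architecture as the paper's: produce an Azumaya $\OO_{S,D}$-algebra of the correct degree via maximal orders, glue with $\AA^U$ using Proposition~\ref{prop:glue}, then extend across the remaining codimension-$\geq 2$ locus. The paper's maximal-order step is organized slightly differently (it first invokes an Azumaya representative from unramifiedness, then uses Auslander--Goldman to pass to a maximal order in the division part and rebuild at the right degree), but the content is identical to your direct appeal to maximal orders over a DVR being Azumaya precisely when the Brauer class is unramified.

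The one genuine difference is in the final step. The paper extends $\AA^{U'}$ across codimension~$\geq 2$ by citing \cite[Thm.~6.13]{colliot-thelene_sansuc:fibres_quadratiques} for $\PGL_n$-torsors. You instead give a self-contained argument: extend the underlying locally free sheaf and the multiplication tensor (both legitimate on a regular surface, by the same mechanism as \cite[Prop.~2.3]{colliot-thelene_sansuc:fibres_quadratiques}), then verify the Azumaya condition by observing that the determinant of $\AA \tensor \AA^{\mathrm{op}} \to \EEnd(\AA)$ is a nonzero section of a line bundle whose zero locus, being locally principal, would have pure codimension~$1$ by Krull's Hauptidealsatz and hence cannot lie in $S\bslash U'$. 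This is more elementary and avoids the torsor machinery; the paper's citation is more conceptual and generalizes cleanly to other group schemes. Both are sound.
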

\begin{proof}
For \textit{a)}, since $\AA^U|_K$ is unramified along $D$, there
exists an Azumaya $\OO_{S,D}$-algebra $\BB^D$ with $\BB^D|_K$ Brauer
equivalent to $A$.

We argue that we can choose $\BB^D$ such that $\BB^D|_K = A$.  Indeed,
writing $\BB^D|_K = M_m(\Delta)$ for a division $K$-algebra $\Delta$
and choosing a maximal $\OO_{S,D}$-order $\DD^D$ of $\Delta$, then
$M_m(\DD^D)$ is a maximal order of $\BB^D|_K$.  Any two maximal orders
are isomorphic by \cite[Prop.~3.5]{auslander_goldman:maximal_orders},
hence $M_m(\DD^D) \isom \BB^D$.  In particular, $\DD^D$ is an Azumaya
$\OO_{S,D}$-algebra.  Finally writing $A = M_n(\Delta)$, then
$M_n(\DD^D)$ is an Azumaya $\OO_{S,D}$-algebra and is our new choice
for $\BB^D$.

Applying Proposition~\ref{prop:glue} to $\AA^U$ and $\BB^D$, we get an
Azumaya $\OO_{U'}$-algebra $\AA^{U'}$ extending $\AA^U$, where $U'$
contains all points of $S$ of codimension 1.  Finally, by
\cite[Thm.~6.13]{colliot-thelene_sansuc:fibres_quadratiques} applied
to the group $\PGL_n$ (where $n$ is the degree of $A$), $\AA^{U'}$
extends to an Azumaya $\OO_S$-algebra $\AA$ such that
$\AA|_U=\AA^U$.

For \textit{b)}, the $K$-algebra $A$ extends, over some open subscheme
$U \subset S$, to an Azumaya $\OO_U$-algebra $\AA^U$.  If $U$ contains
all codimension 1 points, then we apply
\cite[Thm.~6.13]{colliot-thelene_sansuc:fibres_quadratiques} as above.
Otherwise, $D = S \bslash U$ has codimension 1 and we apply part (1).
\end{proof}

Finally, we note that isomorphic Azumaya algebra gluing data on a
regular integral scheme $S$ of dimension $\leq 2$ yield, by
\cite[Thm.~6.13]{colliot-thelene_sansuc:fibres_quadratiques},
isomorphic Azumaya algebras on $S$.

\section{The norm form $\Norm_{T/S}$ for ramified covers}
\label{sec:cores}

Let $S$ be a regular integral scheme, $D \subset S$ a regular divisor,
and $f : T \to S$ a ramified cover of degree 2 branched along $D$.
Then $T$ is a regular integral scheme.  Let $L/K$ be the corresponding
quadratic extension of function fields.  Let $U = S \bslash D$, and
for $E=f\inv(D)$, let $V = T \bslash E$. Then $f|_V : V \to U$ is
\'etale of degree 2. Let $\iota$ be the nontrivial Galois automorphism
of $T/S$.  

The following lemma is not strictly used in our construction but we
need it for the applicatications in \S\ref{sec:failure}.  

\begin{lemma}
\label{lem:cores_extends}
Let $S$ be a regular integral scheme and $f : T \to S$ a finite flat cover of
prime degree $\ell$ with regular branch divisor $D \subset S$ on which
$\ell$ is invertible.  Let $L/K$ be the corresponding extension of
function fields.
\begin{enumerate}
\item The corestriction map $\Norm_{L/K} : \Br(L) \to \Br(K)$
restricts to a well-defined map $\Norm_{T/S} : \Br(T) \to \Br(S)$.

\item If $S$ has dimension $\leq 2$ and $\BB$ is an Azumaya
$\OO_T$-algebra of degree $d$ represeting $\beta \in \Br(T)$ then
there exists an Azumaya $\OO_S$-algebra of degree $\ell d$ representing
$\Norm_{T/S}(\beta)$ whose restriction to $U$ coincides with the
classical \'etale norm algebra $\Norm_{V/U}\BB|_V$.
\end{enumerate}
\end{lemma}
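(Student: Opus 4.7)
For (a), the plan is to use the characterization of $\Br(S)\subset \Br(K)$ by vanishing of residues at codimension one points (valid on any regular scheme by Auslander--Goldman for injectivity, together with purity for Brauer groups for the image). Since $\Br(T)\subsetto\Br(L)$ by the same token, it suffices, for a given $\beta\in\Br(T)$, to check that $\residue_x(\Norm_{L/K}(\beta|_L))$ vanishes for every codimension one point $x$ of $S$. The tool is the standard residue-corestriction compatibility
$$
\residue_x\circ\Norm_{L/K} \;=\; \sum_{y\mapsto x} e_{y/x}\,\cores_{\kappa(y)/\kappa(x)}\circ\residue_y,
$$
where $y$ ranges over codimension one points of $T$ lying over $x$ and $e_{y/x}$ is the ramification index. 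Because $\beta$ comes from $\Br(T)$, each $\residue_y(\beta|_L)=0$, so the right side vanishes. This handles uniformly the étale case ($x\notin D$, every $e_{y/x}=1$) and the totally ramified case ($x\in D$, unique $y$ with $e_{y/x}=\ell$); the hypothesis that $\ell$ is invertible on $D$ ensures tame behavior of the residues and avoids pathologies in the corestriction on residue fields.

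For (b), the plan is to apply the gluing/extension machinery of \S\ref{sec:gluing}. On $U$, one has the classical étale norm $\AA^U:=\Norm_{V/U}(\BB|_V)$, an Azumaya $\OO_U$-algebra of degree $\ell d$. Its Brauer class over the function field $K$ is precisely $\Norm_{L/K}(\beta|_L)$, which by part~(a) is unramified along $D$. Theorem~\ref{thm:purity}(a), which requires $\dim S \leq 2$, then promotes $\AA^U$ to an Azumaya $\OO_S$-algebra $\AA$ with $\AA|_U\isom\AA^U$. Since $\AA$ extends an algebra of degree $\ell d$ across codimension one points, $\AA$ itself has degree $\ell d$ and represents $\Norm_{T/S}(\beta)\in\Br(S)$.

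The main obstacle is the residue-corestriction compatibility invoked in (a), particularly pinning down the factor $e_{y/x}=\ell$ at a ramified point $y\in f\inv(D)$. This is the only delicate ingredient, and it is absorbed harmlessly because the residue on $T$ already vanishes by hypothesis. Once (a) is in place, (b) reduces to a direct invocation of Theorem~\ref{thm:purity}(a), and the compatibility of $\AA|_U$ with the étale norm $\Norm_{V/U}(\BB|_V)$ is automatic from the construction.
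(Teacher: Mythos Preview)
Your proposal is correct and follows essentially the same approach as the paper. The paper handles codimension one points of $U$ slightly differently---by noting that the \'etale norm $\Norm_{V/U}(\BB|_V)$ is already Azumaya there rather than by invoking the residue formula---but the argument at the ramified points uses the same residue-corestriction compatibility (phrased as a commutative diagram with $\kappa(E')=\kappa(D')$ since the extension is totally ramified), and part (b) is likewise a direct appeal to Theorem~\ref{thm:purity}.
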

\begin{proof}
The hypotheses imply that $T$ is regular integral and so by
\cite{auslander_goldman:brauer_group_commutative_ring}, we can
consider $\Br(S) \subset \Br(K)$ and $\Br(T) \subset \Br(L)$.  Let
$\BB$ be an Azumaya $\OO_T$-algebra of degree $d$ representing $\beta
\in \Br(T)$.  As $V/U$ is \'etale, the classical norm algebra
$\Norm_{V/U}(\BB|_V)$ is an Azumaya $\OO_U$-algebra of degree $nd$
representing the class of $\Norm_{L/K}(\beta) \in \Br(K)$.
In particular, $\Norm_{L/K}(\beta)$ is unramified at every point (of
codimension 1) in $U$.  As $D$ is regular, it is a disjoint union of
irreducible divisors and let $D'$ be one such irreducible component.
If $E' = f\pullback D'$, then $\OO_{T,E'}$ is totally ramified over
$\OO_{S,D'}$ (since it is ramified of prime degree).  In particular,
$E' \subset T$ is an irreducible component of $E = f\pullback D$.  The
commutative diagram
$$
\xymatrix@C=36pt@R=18pt{
\Br(\OO_{T,E'}) \ar@{..>}[d] \ar[r] & \Br(L) \ar[d]^{\Norm_{L/K}}\ar[r]^(.43){\residue} & \Br(\kappa(E')) \ar@{=}[d]\\
\Br(\OO_{S,D'}) \ar[r] & \Br(K) \ar[r]^(.43){\residue} & \Br(\kappa(D'))
}
$$
of residue homomorphisms implies, since $\beta$ is unramified along
$E'$, that $\Norm_{L/K}(\beta)$ is unramified along $D'$.  
Thus $\Norm_{L/K}(\beta)$ is an unramified class in $\Br(K)$, hence is
contained in $\Br(S)$ by purity for the Brauer group (cf.\
\cite[Cor.~1.10]{grothendieck:Brauer_II}).  This proves \textit{a)}.

By Theorem~\ref{thm:purity}, $\Norm_{V/U}(\BB|_V)$ extends (since by \textit{a)},
it is unramified along $D$) to an Azumaya $\OO_S$-algebra of degree
$\ell d$, whose generic fiber is $\Norm_{L/K}(\beta)$.
\end{proof}

Suppose that $S$ has dimension $\leq 2$.  We are interested in finding
a good extension of $\Norm_{V/U}(\BB|_V)$ to $S$.  We note that if
$\BB$ has an involution of the first kind $\tau$, then the
corestriction involution $\Norm_{V/U}(\tau|_V)$, given by the
restriction of $\iota\pushforward\tau|_V\tensor\tau|_V$ to
$\Norm_{V/U}(\AA|_V)$, is of orthogonal type.  If $\Norm_{V/U}(\BB|_V)
\isom \EEnd(\EE^U)$ is split, then $\Norm_{V/U}(\tau|_V)$ is adjoint
to a regular line bundle-valued quadratic form $(\EE^U,q^U,\LL^U)$ on
$U$ unique up to projective similarity.
The main result of this section is that this extends to a line
bundle-valued quadratic form $(\EE,q,\LL)$ on $S$ with simple
degeneration along a regular divisor $D$ satisfying
$\CliffB_0(\EE,q,\LL) \isom \BB$.

\begin{theorem}
\label{thm:existence}
Let $S$ be a regular integral scheme of dimension $\leq 2$ with 2
invertible and $f : T \to S$ a finite flat cover of degree 2 with regular
branch divisor $D$.  Let $\BB$ be an Azumaya quaternion
$\OO_T$-algebra with standard involution $\tau$.  Suppose that
$\Norm_{V/U}(\BB|_V)$ is split and $\Norm_{V/U}(\tau|_V)$ is adjoint to a
regular line bundle-valued quadratic form $(\EE^U,q^U,\LL^U)$ on $U$.
There exists a line bundle-valued quadratic form $(\EE,q,\LL)$ on $S$
with simple degeneration along $D$ with multiplicity one, which
restricts to $(\EE^U,q^U,\LL^U)$ on $U$ and such that
$\CliffB_0(\EE,q,\LL)\isom\BB$.
\end{theorem}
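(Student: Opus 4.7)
The plan is to construct $(\EE,q,\LL)$ by gluing: first produce a local model with simple degeneration of multiplicity one over the semilocal ring at the generic points of $D$, then glue it to $(\EE^U,q^U,\LL^U)$ via Proposition~\ref{prop:glue}, and extend across the remaining codimension-two locus via Corollary~\ref{cor:extending_simple_deg}. The essential new content is the local construction at $D$.

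For this, let $R=\OO_{S,D}$ be the semilocal principal ideal domain at the generic points of $D$, with fraction field $K$, and choose a parameter $\pi \in R$ cutting out $D$. The hypothesis that $\Norm_{V/U}(\tau|_V)$ is adjoint to $q^U$, combined with Theorem~\ref{thm:etale} applied on $V$, identifies $\CliffB_0(\EE^U,q^U,\LL^U) \isom \BB|_V$, so $\CliffB_0(q^U|_K) \isom \BB|_L$; in particular this Clifford algebra is unramified at $E$ since $\BB$ is Azumaya on $T$. By Lemma~\ref{lem:disc_ram}, the discriminant of $q^U|_K$ equals $\pi$ in $K\mult/K\mult{}^2$ up to a unit square. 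I would show, working component by component on $D$ over the completion $\widehat{R}_i$, that these two constraints force the projective similarity class of $q^U|_K$ to contain a representative diagonalizable as $\qf{u_1,u_2,u_3,u_4\pi}$ with $u_i \in R\mult$, giving a form $q^R$ over $R$ with simple degeneration of multiplicity one. Its even Clifford algebra $\CliffB_0(q^R)$ is Azumaya over $\OO_{T,E}$ by Proposition~\ref{prop:simple_azumaya}, and, having generic fiber $\BB|_L$ just like $\BB|_{\OO_{T,E}}$ on the regular one-dimensional base $\Spec \OO_{T,E}$, must equal $\BB|_{\OO_{T,E}}$ by purity (Theorem~\ref{thm:purity}).

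Proposition~\ref{prop:rank_4_sim} then upgrades the projective similarity of the previous step to a similarity $q^U|_K \similar q^R|_K$ inducing a prescribed isomorphism on Clifford algebras, yielding a similarity gluing datum in the sense of Section~\ref{sec:gluing}. Proposition~\ref{prop:glue} produces a quadratic form $(\EE^{U'},q^{U'},\LL^{U'})$ over an open $U' \subset S$ containing $U$ and every generic point of $D$. Since $\dim S \leq 2$, the complement $S \bslash U'$ consists of finitely many codimension-two points, and Corollary~\ref{cor:extending_simple_deg} extends the form uniquely to $(\EE,q,\LL)$ on $S$ with simple degeneration of multiplicity one along $D$. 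The resulting $\CliffB_0(\EE,q,\LL)$ is Azumaya on $T$ by Proposition~\ref{prop:simple_azumaya} and agrees with $\BB$ on $V$ (by hypothesis) and at each generic point of $E$ (by construction), hence with $\BB$ globally by a second application of Theorem~\ref{thm:purity} on $T$.

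The main obstacle is the local diagonalization claim: that unramifiedness of $\CliffB_0(q^U|_K)$ at $E$, together with its prescribed discriminant, forces the projective similarity class of $q^U|_K$ to contain a representative with simple degeneration of multiplicity one over $R$, ruling out any ``deeper'' Springer-type configuration. Establishing this is the existence half of an inverse to the even Clifford algebra functor in the degenerate rank-$4$ case, and mirrors the degeneration of the exceptional isomorphism ${}^2\Dynkin{A}_1 = \Dynkin{D}_2$ to $\Dynkin{A}_1 = \Dynkin{B}_1$ realized in Theorem~\ref{thm:isom}. Concretely, I would pass to each completion $\widehat{R}_i$, use an explicit Clifford-invariant computation to exhibit a $(3,1)$-type Springer decomposition in the required class, and then descend the result back to $R$ via the injectivity statements of Corollaries~\ref{cor:injectivity} and~\ref{prop:injectivity_sim}.
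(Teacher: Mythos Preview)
Your architecture is the paper's: build a local model with simple degeneration over $\OO_{S,D}$, glue to $(\EE^U,q^U,\LL^U)$ via Proposition~\ref{prop:glue}, extend across codimension two via Corollary~\ref{cor:extending_simple_deg}, and then match Clifford algebras by gluing/purity. The one real difference is the local step, which the paper handles far more directly than you propose. Instead of passing to completions, invoking Springer decompositions, and constraining with Clifford unramifiedness, the paper simply diagonalizes $q^U|_K \isom \qf{a_1,a_2,a_3,a_4}$ with each $a_i$ squarefree in $\OO_{S,D}$, reads off the discriminant relation $a_1a_2a_3a_4=\mu^2\pi$, notes that each $\pi_i$ divides an odd number of the $a_j$, and then scales by $\mu$ and clears squares. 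The Clifford unramifiedness plays no role here; the discriminant alone suffices, and your ``main obstacle'' dissolves into a parity count. Your proposed descent via Corollaries~\ref{cor:injectivity} and~\ref{prop:injectivity_sim} is also slightly off target: those are injectivity statements and do not by themselves produce a form over $R$ from forms over the $\wh{R}_i$; what you actually need is precisely the global scaling $\mu$ just described.

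Two small citation adjustments: the similarity $\psi^K$ over $K$ inducing a prescribed Clifford isomorphism comes from Theorem~\ref{thm:etale} applied over the field $K$ (the classical ${}^2\Dynkin{A}_1=\Dynkin{D}_2$), not from Proposition~\ref{prop:rank_4_sim}, which is a statement over the semilocal ring $R$; and the identification of the discriminant square class with a parameter of $\OO_{S,D}$ is Lemma~\ref{lem:discriminant}, not Lemma~\ref{lem:disc_ram}.
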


First we need the following lemma.  Let $S$ be a normal integral
scheme, $K$ its function field, $D \subset S$ a regular divisor, and
$\OO_{S,D}$ the the semilocal ring at the generic points of $D$.

\begin{lemma}
\label{lem:discriminant}
Let $S$ be a normal integral scheme with $2$ invertible, $T \to S$ a
finite flat cover of degree 2 with regular branch divisor $D \subset S$, and
$L/K$ the corresponding extension of function fields.  Under the
restriction map $\Het^1(U,\Z/2\Z) \to \Het^1(K,\Z/2\Z) =
K\mult/K\mult{}^2$, the class of the \'etale quadratic extension
$[V/U]$ maps to a square class
represented by a parameter $\pi \in K\mult$ of the semilocal ring
$\OO_{S,D}$.
\end{lemma}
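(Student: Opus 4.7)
The plan is to combine Kummer theory at the generic point with a Chinese remainder argument in the semilocal principal ideal domain $\OO_{S,D}$ to manufacture a parameter that represents the desired square class.

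First, since $2$ is invertible on $S$, \'etale Kummer theory identifies $\Het^1(K,\Z/2\Z)$ with $K\mult/K\mult{}^2$ by sending a class to the square class of $d$ in any presentation $L = K(\sqrt{d})$. The restriction of $[V/U]$ to the generic point is simply $[L/K]$, so the task reduces to producing a representative of this square class which is a parameter of $\OO_{S,D}$ in the sense of \S\ref{sec:preliminary_results}.

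Second, I would localize at the generic points $\eta_1,\dotsc,\eta_r$ of the irreducible components of $D$. Since $T$ is normal and $f$ is finite flat of degree $2$ with branch divisor exactly $D$, each $\eta_i$ has a unique preimage $\eta_i' \in T$, and the induced local extension $\OO_{T,\eta_i'}/\OO_{S,\eta_i}$ of discrete valuation rings is totally ramified of degree $2$ (being ramified of degree $2$, the relations $e \geq 2$ and $ef = 2$ force $e = 2$, $f = 1$). Writing $v_i$ for the valuation attached to $\eta_i$, the standard Kummer computation (valid because $2$ is invertible in the residue field) shows that total ramification of $K_{v_i}(\sqrt{d})/K_{v_i}$ forces $v_i(d)$ to be odd. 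I expect the careful identification of the local structure of the cover at each branch component to be the main technical point, essentially an exercise in degree counting, but it is the only place where the hypothesis that $D$ is precisely the branch divisor enters.

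Finally, I would invoke the Chinese remainder theorem in the semilocal PID $\OO_{S,D}$ to choose, for each $i$, an element $t_i \in \OO_{S,D}$ with $v_j(t_i) = \delta_{ij}$. Writing $v_i(d) = 2n_i + 1$ and setting
$$
\pi = d \cdot \prod_i t_i^{-2n_i},
$$
the element $\pi$ differs from $d$ by a square in $K\mult$ and satisfies $v_i(\pi) = 1$ for every $i$. Since $\OO_{S,D}$ is the intersection of its discrete valuation overrings $R_i$ inside $K$ (normality) and $v_i(\pi) \geq 0$ for all $i$, we have $\pi \in \OO_{S,D}$, and by construction $\pi$ is a parameter in the sense of \S\ref{sec:preliminary_results}, representing the image of $[V/U]$.
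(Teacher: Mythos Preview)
Your proposal is correct and follows essentially the same approach as the paper: identify the square class via Kummer theory, show the valuation of any representative is odd at each component of $D$, then adjust by squares in the semilocal PID $\OO_{S,D}$ to obtain a parameter. The only stylistic difference is in the second step: the paper argues by contrapositive (even valuation would allow modification to valuation zero, making $T/S$ \'etale at that generic point, contradicting that $D$ is the branch locus), whereas you argue directly via total ramification and the $ef=2$ relation. One small caution: your invocation of ``$T$ is normal'' to get that $\OO_{T,\eta_i'}$ is a DVR is not literally among the hypotheses of the lemma (only $S$ is assumed normal), though it holds in every case where the lemma is applied; the paper's contrapositive phrasing sidesteps this issue entirely.
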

\begin{proof}
Consider any $\pi \in K\mult$ with $L = K(\sqrt{\pi})$.  For any
irreducible component $D'$ of $D$, if $v_{D'}(\pi)$ is even, then we
can modify $\pi$ up to squares in $K$ so that $v_{D'}(\pi)=0$. But
then $T/S$ would be \'etale at the generic point of $D'$, which is
impossible.  Hence, $v_{D'}(\pi)$ is odd for every irreducible
component $D'$ of $D$.  Since $\OO_{S,D}$ is a principal ideal domain,
we can modify $\pi$ up to squares in $K$ so that $v_{D'}(\pi)=1$ for
every component $D'$ of $D$.  Under $\Het^1(U,\Z/2\Z) \to
\Het^1(K,\Z/2\Z) = K\mult/K\mult{}^2$, the class of $[V/U]$ is mapped
to the class of $[L/K]$, which corresponds via Kummer theory to the
square class $(\pi)$.
\end{proof}

\begin{proof}[Proof of Theorem~\ref{thm:existence}]
If $D = \cup D_i$ is the irreducible decomposition of $D$ and $\pi_i$
is a parameter of $\OO_{S,D_i}$, then $\pi=\prod_i \pi_i$ is a
parameter of $\OO_{S,D}$. Choose a regular quadratic form
$(\EE^U,q^U,\LL^U)$ on $U$ adjoint to $\Norm_{V/S}(\sigma|_V)$.  Since
$\OO_{S,D}$ is a principal ideal domain, modifying by squares over
$K$, the form $q^U|_K$ has a diagonalization $\qf{a_1,a_2,a_3,a_4}$,
where $a_i \in \OO_{S,D}$ are squarefree.  By
Lemma~\ref{lem:discriminant}, we can choose $\pi \in K\mult$ so that
$[V/U] \in \Het^1(U,\Z/2\Z)$ maps to the square class $(\pi)$.  By
Theorem~\ref{thm:etale}, the class $[V/U]$ maps to the discriminant of
$q^U|_K$.  Since $\OO_{S,D}$ is a principal ideal domain, $a_1\dotsm
a_4 = \mu^2 \pi$, for some $\mu \in \OO_{S,D}$.  If $\pi_i$ divides
$\mu$, then $\pi_i$ divides exactly 3 of $a_1, a_2,a_3,a_4$, so that clearing
squares from the entries of $\mu\!\qf{a_1,a_2,a_3,a_4}$ yields a form
$\qf{a_1',a_2',a_3',a_4'}$ over $\OO_{S,D}$ with simple degeneration
along $D$, which over $K$, is isometric to $\mu q^U|_K$.
Define
$$(\EE^D,q^D,\LL^D) = 
(\OO_{S,D}^4,\qf{a_1',a_2',a_3',a_4'},\OO_{S,D}).
$$ 
By definition, the identity map is a similarity $q^U|_K \similar
q^{D}|_K$ with similarity factor $\mu$ (up to $K\mult{}^2$).
Our aim is to find a good similarity enabling a gluing to a quadratic
form over $S$ with simple degeneration along $D$ and the correct even
Clifford algebra.

First note that by the classical theory of
${}^2\Dynkin{A}_1=\Dynkin{D}_2$ over $V/U$ (cf.\ Theorem
\ref{thm:etale}), we can choose an $\OO_V$-algebra isomorphism $\vp^U
: \CliffB_0(\EE^U,q^U,\LL^U) \to \BB|_V$.  Second, we can pick an
$\OO_{T,E}$-algebra isomorphism $\vp^{D} : \CliffB_0(q^{D}) \to
\BB|_{E}$, where $E = f\inv D$.  Indeed, by the classical theory of
${}^2\Dynkin{A}_1=\Dynkin{D}_2$ over $L/K$ (cf.\
Theorem~\ref{thm:etale}), the central simple algebras
$\CliffB_0(q^{D})|_L$ and $\BB|_{L}$ are isomorphic over $L$, hence
they are isomorphic over the semilocal principal ideal domain
$\OO_{T,E}$.  Now consider the $L$-algebra isomorphism $\vp^L =
(\vp^U|_L)\inv \circ \vp^{D}|_L : \CliffB_0(q^{D})|_L \to
\CliffB_0(q^U)|_L$.  Again by the classical theory of
${}^2\Dynkin{A}_1=\Dynkin{D}_2$ over $L/K$ (cf.\ Theorem
\ref{thm:etale}), this is induced by a similarity $\psi^K : q^D|_K \to
q^U|_K$, unique up to multiplication by scalars.  By
Proposition~\ref{prop:glue}, the quadratic forms $(\EE^U,q^U,\LL^U)$
and $(\EE^D,q^{D},\LL^D)$ glue, via the similarity $\psi^K$, to a
quadratic form $(\EE^{U'},q^{U'},\LL^{U'})$ on a dense open subscheme
$U' \subset S$ containing $U$ and the generic points of $D$, hence all
points of codimension 1. 
By \cite[Prop.~2.3]{colliot-thelene_sansuc:fibres_quadratiques}, the
quadratic form $(\EE^{U'},q^{U'},\LL^{U'})$ 
extends uniquely to a quadratic form $(\EE,q,\LL)$ on $S$ since the
underlying vector bundle $\EE^{U'}$ extends to a vector bundle $\EE$
on $S$ (because $S$ is a regular integral schemes of dimension $\leq
2$).  By Corollary~\ref{cor:extending_simple_deg}, this extension has
simple degeneration along $D$.

Finally, we argue that $\CliffB_0(q) \isom \BB$.  We know that
$q|_U=q^U$ and $q|_{D} = q^{D}$ and we have algebra isomorphisms
$\vp^U : \CliffB_0(q)|_U \isom \BB|_U$ and
$\vp^{D} : \CliffB_0(q)|_{D} \isom \BB|_{D}$
such that $\vp^L = (\vp^U|_L)\inv \circ \vp^{D}|_L$.  Hence the gluing
data $(\CliffB_0(q)|_U,\CliffB_0(q)|_{D},\vp^L)$ is isomorphic to the
gluing data $(\BB|_U, \BB|_{D}, \id)$.  Thus $\CliffB_0(q)$ and $\BB$
are isomorphic over an open subset $U' \subset S$ containing all
codimension 1 points of $S$.  Hence by
\cite[Thm.~6.13]{colliot-thelene_sansuc:fibres_quadratiques}, these
Azumaya algebras are isomorphic over $S$.
\end{proof}

Finally, we give the proof of our main result.

\begin{proof}[Proof of Theorem~\ref{thm:main}]  
Theorem~\ref{thm:existence} implies that $\CliffB_0 : \Quadrics_2(T/S)
\to \Az_2(T/S)$ is surjective.  To prove the injectivity, let
$(\EE_1,q_1,\LL_1)$ and $(\EE_2,q_2,\LL_2)$ be line bundle-valued
quadratic forms of rank 4 on $S$ with simple degeneration along $D$ of
multiplicity one such that there is an $\OO_T$-algebra isomorphism $\vp
: \CliffB_0(q_1) \isom \CliffB_0(q_2)$.  By the classical theory of
${}^2\Dynkin{A}_1=\Dynkin{D}_2$ over $V/U$ (cf.\ Theorem
\ref{thm:etale}), we know that $\vp|_U : \CliffB_0(q_1)|_U \isom
\CliffB_0(q_2)|_U$ is induced by a similarity transformation $\psi^U :
q_{1}|_U \similar q_{2}|_U \tensor \NN^U$, for some line bundle
$\NN^U$ on $U$, which we can assume is the restriction of a line
bundle $\NN$ on $S$. Replacing
$(\EE_2,q_2,\LL_2)$ by $(\EE_2\tensor \NN,q_2 \tensor
\qf{1},\LL\tensor\NN^{\tensor 2})$, which is in the same
projective similarity class, we can assume that $\psi^U : q_{1}|_U
\similar q_2|_U$.
In particular, $\LL_1|_U\isom \LL_2|_U$ so that we have $\LL_1 \isom
\LL_2 \tensor \MM$ for some $\MM|_U \isom \OO_U$ by the exact excision
sequence
$$
A^0(D) \to \Pic(S) \to \Pic(U) \to 0
$$
of Picard groups (really Weil divisor class groups), cf.\  \cite[Cor.~21.6.10]{EGA4},
\cite[1~Prop.~1.8]{fulton:intersection_theory}.

By Theorem~\ref{prop:rank_4_sim}, we know that $\vp|_D :
\CliffB_0(q_1)|_E \isom \CliffB_0(q_2)|_E$ is induced by some
similarity transformation $\psi^D : q_{1}|_D \similar q_{2}|_D$.
Thus $\psi^K = (\psi^D|_K)\inv \circ \psi^U|_K \in \GOrth(q_1|_K)$.
Since $\CliffAlg_0(\psi^U|_K) = \CliffAlg_0(\psi^D|_K) = \vp|_K$, we
have that $\psi^K \in \GOrth(q_1|_K)$ is a homothety, multiplication
by $\lambda \in K\mult$.
As in \S\ref{sec:gluing}, define a line bundle $\PP$ on $S$ by the
gluing datum $(\OO_U,\OO_D,\lambda\inv : \OO_U|_K \isom \OO_D|_K)$.
Then $\PP$ comes equipped with isomorphisms $\rho^U : \OO_U \isom
\PP|_U$ and $\rho^D : \OO_D \isom \PP|_D$ with $(\rho^D|_K)\inv \circ
\rho^U|_K = \lambda\inv$.  Then we have similarities $\psi^U\tensor
\rho^U : q_1|_U \similar q_2|_D \tensor \PP|_U$ and $\psi^D\tensor
\rho^D : q_1|_D \similar q_2|_D \tensor \PP|_D$ such that
$$
(\psi^D\tensor \rho^D)|_K\inv \circ (\psi^U\tensor \rho^U)|_K =
(\psi^D|_K\inv \psi^U|_K) (\rho^D|_K\inv \rho^U|_K) =
\psi^K \lambda\inv  = \id
$$ 
in $\GOrth(q_1|_K)$.  Hence, as in \S\ref{sec:gluing}, $\psi^U\tensor
\rho^U$ and $\psi^D\tensor \rho^D$ glue to a similarity
$(\EE_1,q_1,\LL_1) \similar (\EE_2\tensor
\PP,q_2\tensor\qf{1},\LL_2\tensor\PP^{\tensor 2})$.  
Thus $(\EE_1,q_1,\LL_1)$ and $(\EE_2,q_2,\LL_2)$ define the same
element of $\Quadrics_2(T/S)$.
\end{proof}

\section{Failure of the local-global principle for isotropy of quadratic
forms}
\label{sec:failure}

In this section, we mention one application of the theory of quadratic
forms with simple degeneration over surfaces.  Let $S$ be a regular
proper integral scheme of dimension $d$ over an algebraically closed
field $k$ of characteristic $\neq 2$.  For a point $x$ of $X$, denote
by $K_x$ the fraction field of the completion $\widehat{\OO}_{S,x}$ of
$\OO_{S,x}$ at its maximal ideal.

\begin{lemma}
Let $S$ be a regular integral scheme of dimension $d$ over an
algebraically closed field $k$ of characteristic $\neq 2$ and let $D
\subset S$ be a divisor.  Fix $i > 0$.  If $(\EE,q,\LL)$ is a
quadratic form of rank $> 2^{d-i}+1$ over $S$ with simple degeneration
along $D$ then $q$ is isotropic over $K_x$ for all points $x$ of $S$ of
codimension $\geq i$.
\end{lemma}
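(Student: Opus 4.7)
Fix a point $x$ of $S$ of codimension $c \geq i$, and let $n$ denote the rank of $(\EE,q,\LL)$. The plan is to split off a regular rank-$(n-1)$ subform over $\OO_{S,x}$, reduce isotropy to a statement over the residue field $\kappa(x)$ via Hensel's lemma, and conclude with the Tsen--Lang theorem.

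First, I would apply Lemma~\ref{lem:simple_local} to the regular local ring $\OO_{S,x}$ to obtain a decomposition $(\EE,q)|_{\OO_{S,x}} \isom (\EE_1,q_1) \perp (\OO_{S,x},\qf{\pi})$ with $q_1$ regular of rank $n-1$ and $\pi \in \OO_{S,x}$; it then suffices to find an isotropic vector for $q_1$ over $K_x$.  A small check is required here: simple degeneration is defined at the closed points of $S$, but the lemma requires the radical to have rank at most one at the (possibly non-closed) point $x$.  This follows from the upper semicontinuity of the corank of $\psi_q$, so the hypothesis propagates from the closed points in the closure of $\{x\}$ to $x$ itself.

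Second, I would pass to the completion $\widehat{\OO}_{S,x}$.  Since $q_1$ is regular, the projective quadric $\{q_1=0\}\subset \P(\EE_1)$ is smooth over $\widehat{\OO}_{S,x}$, so by Hensel's lemma any $\kappa(x)$-rational point of the special fiber lifts to a $\widehat{\OO}_{S,x}$-point, and hence gives a $K_x$-point.  Thus it is enough to exhibit an isotropic vector for the reduction $\overline{q_1}$ over $\kappa(x)$.

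Third, I would invoke the Tsen--Lang theorem.  Since $\kappa(x)$ is a finitely generated extension of the algebraically closed field $k$ of transcendence degree $d-c$, it is a $C_{d-c}$-field.  The rank of $\overline{q_1}$ is $n-1 > 2^{d-i} \geq 2^{d-c}$ (using $c \geq i$), so $\overline{q_1}$ is isotropic over $\kappa(x)$, completing the argument.

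The only point requiring attention is the first step, namely the verification that the hypothesis of Lemma~\ref{lem:simple_local} applies to $\OO_{S,x}$ when $x$ is not closed; the rest is a standard combination of Hensel's lemma and Tsen--Lang.  The role of the simple degeneration hypothesis is precisely that the rank drops by only one in the decomposition: an arbitrary rank-$r$ radical at $x$ would leave a regular subform of rank $n-r$ and yield a substantially weaker bound.
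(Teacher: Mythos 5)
Your proof is correct and follows essentially the same route as the paper's: split off a regular subform of corank one at $x$ via Lemma~\ref{lem:simple_local}, apply Tsen--Lang to the residue field $\kappa(x)$ (a $C_{d-i}$-field), and lift an isotropic vector by Hensel's lemma over $\widehat{\OO}_{S,x}$. The extra remark about propagating simple degeneration from closed points to $x$ via upper semicontinuity of the corank is a sensible detail that the paper leaves implicit.
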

\begin{proof}
The residue field $\kappa(x)$ of $K_x$ has transcendence degree $\leq
d-i$ over $k$ and is hence a $C_{d-i}$-field.  By hypothesis, $q$ has,
over $K_x$, a subform $q_1$ of rank $> 2^{d-i}$ that is regular over
$\widehat{\OO}_{S,x}$.  Hence $q_1$ is isotropic over $\kappa(x)$, thus $q$
is isotropic over the complete field $K_x$.
\end{proof}

As usual, denote by $K=k(S)$ the function field.  We say that a
quadratic form $q$ over $K$ is \linedef{locally isotropic} if $q$ is
isotropic over $K_x$ for all points $x$ of codimension one. 

\begin{cor}
\label{cor:local_isotropic}
Let $S$ be a proper regular integral surface over an algebraically
closed field $k$ of characteristic $\neq 2$ and let $D \subset S$ be a
regular divisor.  If $(\EE,q,\LL)$ is a quadratic form of rank $\geq
4$ over $S$ with simple degeneration along $D$ then $q$ over $K$ is
locally isotropic.
\end{cor}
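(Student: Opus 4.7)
The plan is to observe that this corollary is an immediate specialization of the preceding lemma to the two-dimensional case. I would apply the lemma with $d = 2$ (since $S$ is a surface) and $i = 1$ (since local isotropy in the sense of the corollary means isotropy at every codimension one point). The lemma's hypothesis on the rank then reads $\rk q > 2^{d-i} + 1 = 2^{1} + 1 = 3$, which is exactly guaranteed by our assumption $\rk q \geq 4$.

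More concretely, for any codimension one point $x$ of $S$, the residue field $\kappa(x)$ has transcendence degree at most $1$ over the algebraically closed field $k$, hence is a $C_1$ field by Tsen's theorem. Using the local description from Lemma~\ref{lem:simple_local} over the discrete valuation ring $\OO_{S,x}$, the form $q$ contains, over $\widehat{\OO}_{S,x}$, a regular orthogonal summand of rank at least $\rk q - 1 \geq 3$. Its reduction modulo the maximal ideal is a regular quadratic form of rank $\geq 3$ over the $C_1$ field $\kappa(x)$, hence is isotropic. Hensel's lemma (applied to the complete discrete valuation ring $\widehat{\OO}_{S,x}$ with $2$ invertible) lifts this isotropy to $\widehat{\OO}_{S,x}$, and a fortiori to $K_x$.

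Since this plan is essentially a direct invocation of the preceding lemma, there is no real obstacle; the only subtlety is making sure both components of the ``locally isotropic'' definition line up: the codimension one points of $S$ are exactly the points at which the lemma with $i = 1$ provides isotropy, and the rank threshold $2^{d-i}+1 = 3$ matches the hypothesis $\rk q \geq 4$. I would phrase the corollary's proof as a single line citing the lemma.
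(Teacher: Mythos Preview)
Your proposal is correct and matches the paper's approach exactly: the corollary is stated without proof, as an immediate specialization of the preceding lemma with $d=2$ and $i=1$, and the paper simply notes an alternative reference for a different argument. Your one-line citation of the lemma is precisely what is intended; the additional concrete unpacking you give (the $C_1$ property of $\kappa(x)$, the regular corank-one summand from Lemma~\ref{lem:simple_local}, and Hensel's lemma) just recapitulates the lemma's own proof in the case $d=2$, $i=1$.
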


For a different proof of this corollary, see
\cite[\S3]{parimala_ojanguren:quadratic_forms_complete_local_rings}.
However, quadratic forms with simple degeneration are mostly
anisotropic.

\begin{theorem}
\label{thm:anisotropic}
Let $S$ be a proper regular integral surface with 2 invertible and
$\Brtwo(S)$ trivial.  Let $T \to S$ be a finite flat morphism of degree 2 and
$D \subset S$ a smooth divisor.  Each nontrivial class in $\Brtwo(T)$
gives rise to an anisotropic quadratic form over $K$, unique up to similarity.
\end{theorem}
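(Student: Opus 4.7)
The plan is to convert the given class $\beta$ into an Azumaya quaternion algebra in $\Az_2(T/S)$, apply Theorem~\ref{thm:main} to obtain a quadric surface bundle with simple degeneration, and then deduce anisotropy of the generic fiber from the classical exceptional isomorphism ${}^2\Dynkin{A}_1=\Dynkin{D}_2$. First, given nontrivial $\beta \in \Brtwo(T)$, I would represent it by an Azumaya quaternion algebra $\BB$ on $T$. The restriction $\beta|_L$ to the function field $L = K(T)$ has period $2$, so by the period-index theorem for Brauer groups of function fields of surfaces over algebraically closed fields, there is a quaternion $L$-algebra $A$ with class $\beta|_L$. Taking a maximal $\OO_T$-order in $A$ yields an Azumaya algebra over a dense open subscheme of $T$, and Theorem~\ref{thm:purity} extends it to an Azumaya quaternion algebra $\BB$ on all of $T$ (applicable because $\beta \in \Br(T)$ is already unramified). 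By Lemma~\ref{lem:cores_extends}, $\Norm_{T/S}(\beta)$ lies in $\Brtwo(S)$, which is trivial by hypothesis, so $\BB$ has generically trivial corestriction, i.e., $\BB \in \Az_2(T/S)$.

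Second, Theorem~\ref{thm:main} supplies $(\EE, q, \LL) := \Norm_{T/S}(\BB) \in \Quadrics_2(T/S)$: a quadric surface bundle with simple degeneration of multiplicity one along $D$ and satisfying $\CliffAlg_0(\EE, q, \LL) \isom \BB$. The crux is then to show that $q|_K$ is anisotropic. It is a regular quadratic form of rank $4$ over $K$ whose discriminant extension is the generic fiber $L/K$ of $T \to S$, and whose even Clifford algebra over $L$ is $\CliffAlg_0(q)|_L \isom \BB|_L$. Since $T$ is regular integral, $\Br(T) \hookrightarrow \Br(L)$ is injective, so $\BB|_L$ is a nontrivial Brauer class and hence a division quaternion $L$-algebra. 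The classical ${}^2\Dynkin{A}_1=\Dynkin{D}_2$ criterion (Theorem~\ref{thm:etale} applied over $K$) then forces $q|_K$ to be anisotropic: a regular quaternary form over a field is isotropic exactly when its even Clifford algebra splits over the discriminant extension.

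For uniqueness up to similarity, any two forms $q_1, q_2$ produced this way have even Clifford algebras over $L$ that are degree-$2$ central simple $L$-algebras with the common Brauer class $\beta|_L$, hence are isomorphic as $L$-algebras; Theorem~\ref{thm:etale} over $K$ then yields a similarity $q_1|_K \similar q_2|_K$. The main obstacle is the first step --- upgrading the period-$2$ class $\beta$ to a degree-$2$ Azumaya algebra on $T$ --- a period-index question, handled by citing known results for function fields of surfaces over algebraically closed fields and extending via Theorem~\ref{thm:purity}. Once that input is secured, the remainder is a direct application of the bijection in Theorem~\ref{thm:main} and the classical Clifford-theoretic isotropy criterion.
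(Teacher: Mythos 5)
Your proposal is correct and follows essentially the same route as the paper's proof: period--index (the paper cites Artin's result for $2$-torsion classes over function fields of surfaces) gives a quaternion $L$-algebra representing $\beta|_L$, purity (Theorem~\ref{thm:purity}) extends it to an Azumaya quaternion algebra on $T$, Lemma~\ref{lem:cores_extends} and the triviality of $\Brtwo(S)$ put $\BB$ in $\Az_2(T/S)$, Theorem~\ref{thm:main} produces the form, and the classical rank-$4$ isotropy criterion via splitting of $\CliffAlg_0$ over the discriminant extension gives anisotropy, with uniqueness via Theorem~\ref{thm:etale}. The only minor difference is that you spell out the maximal-order argument that is already bundled into the proof of Theorem~\ref{thm:purity}~(b).
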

\begin{proof}
Let $L = k(T)$ and $K=k(S)$.  Let $\beta \in \Brtwo(T)$ be nontrivial.
Then by \cite{artin:Brauer-Severi}, $\beta_L \in \Brtwo(L)$ has index
2 and by purity for division algebras over regular surfaces
(Theorem~\ref{thm:purity}), there exists an Azumaya quaternion algebra
$\BB$ over $T$ whose Brauer class is $\beta$.  Since
$\Norm_{L/K}(\beta|_L)$ is unramified on $S$, by
Lemma~\ref{lem:cores_extends}, it extends to an element of
$\Brtwo(S)$, which is assumed to be trivial.  By the classical theory
of ${}^2\Dynkin{A}_1 = \Dynkin{D}_2$ over $L/K$ (cf.\
Theorem~\ref{thm:etale}), the quaternion algebra $\BB|_L$ corresponds
to a unique similarity class of quadratic form $q^K$ of rank 4 on $K$.
By Theorem~\ref{thm:main}, $\BB$ corresponds to a unique projective similarity
class of quadratic form $(\EE,q,\LL)$ of rank 4 with simple
degeneration along $D$ and such that $q|_K = q^K$.  

A classical result in the theory of quadratic forms of rank 4 is that
$q^K$ is isotropic over $K$ if and only if $\CliffB_0(q^K)$ splits
over $L$ (here $L/K$ is the discriminant extension of $q^K$), see
\cite[Thm.~6.3]{knus_parimala_sridharan:rank_4},
\cite[2~Thm.~14.1,~Lemma~14.2]{scharlau:book}, or \cite[II Prop.\
5.3]{baeza:semilocal_rings} in characteristic $2$.  Hence $q^K$ is
anisotropic since $\CliffB_0(q^K) = \BB_L$ has nontrivial Brauer class
$\beta$ by assumption.
\end{proof}

We make Theorem~\ref{thm:anisotropic} explicit as follows.  Write
$L=K(\sqrt{d})$.  Let $\BB$ be an Azumaya quaternion algebra over $T$
with $\BB_L$ given by the quaternion symbol $(a,b)$ over $L$.  Since
$\Norm_{L/K}(\BB_L)$ is trivial, the restriction-corestriction
sequence shows that $\BB|_L$ is the restriction of a class from
$\Brtwo(K)$, so we can choose $a,b \in K\mult$.  The corresponding
quadratic form over $K$ given by Theorem~\ref{thm:main} is then given,
up to similarity, by $\qf{1,a,b,abd}$, since it has discriminant $d$
and even Clifford invariant $(a,b)$ over $L$, see
\cite{knus_parimala_sridharan:rank_4}.

\begin{example}
Let $T \to \P^2$ be a branched double cover over a smooth sextic curve
over an algebraically closed field of characteristic zero.  Then $T$
is a smooth projective K3 surface of degree 2.  The Picard rank $\rho$
of $T$ can take values $1 \leq \rho \leq 20$, with the ``generic''
such surface having $\rho=1$.  In particular, $\Brtwo(T) \approx
(\Z/2\Z)^{22-\rho} \neq 0$, so that $T$ gives rise to $2^{22-\rho} -
1$ similarity classes of locally isotropic yet anisotropic quadratic
forms of rank 4 over $K=k(\P^2)$.  Explicit examples of such a
quadratic form are given in \cite{hassett_varilly_varilly} and
\cite{ABBV:pfaffian}.
\end{example}

\section{The Torelli theorem for cubic fourfolds containing a plane}
\label{sec:hodge}

Let $Y$ be a \linedef{cubic fourfold}, i.e., a smooth projective cubic
hypersurface of $\P^5 = \P(V)$ over $\C$.
Let $W \subset V$ be a subspace of rank three, $P=\P(W) \subset \P(V)$
the associated plane, and $P'=\P(V/W)$.  If $Y$ contains $P$, let
$\wt{Y}$ be the blow-up of $Y$ along $P$ and $\pi : \wt{Y} \to P'$ 
the projection from $P$.
The blow-up of $\P^5$ along $P$ is isomorphic to the total space of
the projective bundle $p : \P(\EE) \to P'$, where $\EE =
W\tensor\OO_{P'} \oplus \OO_{P'}(-1)$, and in which
$\pi : \wt{Y} \to P'$ embeds as a quadric surface bundle.  The
degeneration divisor of $\pi$ is a sextic curve $D \subset P'$.  It is
known that $D$ is smooth and $\pi$ has simple degeneration along $D$
if and only if $Y$ does not contain any other plane meeting $P$, cf.~\cite[\S1,~Lemme~2]{voisin:cubic_fourfolds}.  In this case, the
discriminant cover $T \to P'$ is a K3 surface of degree 2.
All K3 surfaces considered will be smooth and projective.

We choose an identification $P' = \P^2$ and suppose, for the rest of
this section, that $\pi : \wt{Y} \to P' = \P^2$ has simple
degeneration.  If $Y$ contains another plane $R$ disjoint from $P$,
then $R \subset \wt{Y}$ is the image of a section of $\pi$, hence
$\CliffB_0(\pi)$ has trivial Brauer class over $T$ by a
classical result concerning quadratic forms of rank 4, cf.\ proof of Theorem~\ref{thm:anisotropic}.  Thus if
$\CliffB_0(\pi)$ has nontrivial Brauer class $\beta \in \Brtwo(T)$,
then $P$ is the unique plane contained in $Y$.

Given a scheme $T$ with 2 invertible and an Azumaya quaternion algebra
$\BB$ on $T$, there is a canonically lift $[\BB] \in \Het^2(T,\muu_2)$
of the Brauer class of $\BB$, defined in
\cite{parimala_srinivas:brauer_group_involution} by taking into
account the standard symplectic involution on $\BB$.  Denote by $c_1 :
\Pic(T) \to \Het^2(T,\muu_2)$ the mod 2 cycle class map arising from
the Kummer sequence.

\begin{defin}
Let $T$ be a K3 surface of degree 2 over $k$ together with a
\linedef{polarization} $\FF$, i.e., an ample line bundle of
self-intersection 2.  For $\beta \in \Het^2(T,\muu_2)/\langle
c_1(\FF)\rangle$, we say that a cubic fourfold $Y$
\linedef{represents} $\beta$ if $Y$ contains a plane whose associated
quadric bundle $\pi : \wt{Y} \to \P^2$ has simple degeneration and
discriminant cover $f : T \to \P^2$ satisfying $f\pullback
\OO_{\P^2}(1) \isom \FF$ and $[\CliffB_0(\pi)]=\beta$.
\end{defin}

\begin{remark}
For a K3 surface $T$ of degree 2 with a polarization $\FF$, not every
class in $\Het^2(T,\muu_2)/\langle c_1(\FF) \rangle$ is represented by
a cubic fourfold, though one can characterize such classes.  Consider
the cup product mapping $\Het^2(T,\muu_2) \times \Het^2(T,\muu_2) \to
\Het^4(T,\muu_2^{\tensor 2}) \isom \Z/2\Z$.  Define
$$
B(T,\FF) = \{ \ol{x} \in \Het^2(T,\muu_2) / \langle c_1(\FF)\rangle \; \bigl| \; x \cup c_1(\FF) \neq 0\}.
$$
Note that the natural map $B(T,\FF) \to \Brtwo(T)$ is injective if and
only if $\Pic(T)$ is generated by $\FF$.  A consequence of the global
description of the period domain for cubic fourfolds containing a
plane is that for a K3 surface $T$ of degree 2 with polarization
$\FF$, the subset of $\Het^2(T,\muu_2)/\langle c_1(\FF) \rangle$ represented by a cubic
fourfolds containing a plane coincides with $B(T,f) \cup \{0\}$, cf.~\cite[\S9.7]{van_geemen:Brauer_K3} and
\cite[Prop.~2.1]{hassett_varilly_varilly}.
\end{remark}

We can now state the main result of this section.  Using Theorem~\ref{thm:main} and
results on twisted sheaves described below, we provide an algebraic
proof of the following result, which is due to Voisin
\cite{voisin:cubic_fourfolds} (cf.\
\cite[\S9.7]{van_geemen:Brauer_K3} and
\cite[Prop.~2.1]{hassett_varilly_varilly}).

\begin{theorem}
\label{thm:Torelli}
Let $T$ be a generic K3 surface of degree 2 with a polarization $\FF$.
Then each element of
$B(T,\FF)$
is represented by 
a single cubic fourfold containing a plane up to isomorphism.
\end{theorem}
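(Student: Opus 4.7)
The plan is to combine Theorem~\ref{thm:main} with the theory of moduli of twisted sheaves on K3 surfaces to establish a bijection between isomorphism classes of cubic fourfolds $Y$ containing a plane $P$ (with $\pi : \wt{Y} \to \P^2$ having simple degeneration) and triples $(T,\FF,\beta)$ with $\beta \in B(T,\FF)$. Both directions of the correspondence pass through the intermediate object of a quadric surface bundle on $\P^2$ with simple degeneration along a smooth sextic and discriminant cover $T \to \P^2$, where the Brauer-theoretic data is controlled by the even Clifford algebra.

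For existence, given $\beta \in B(T,\FF)$, lift $\beta$ to a class in $\Het^2(T,\muu_2)$; by purity for division algebras over surfaces (Theorem~\ref{thm:purity}), this class is the canonical lift $[\BB]$ of some Azumaya quaternion algebra $\BB$ on $T$. The genericity of $T$, together with $\Brtwo(\P^2)=0$ and Lemma~\ref{lem:cores_extends}, forces $\Norm_{V/U}(\BB|_V)$ (where $U = \P^2\bslash D$) to be trivial, so that $\BB \in \Az_2(T/\P^2)$. Theorem~\ref{thm:main} then produces a unique quadric surface bundle $\pi : Q \to \P^2$ with simple degeneration along $D$ and $\CliffB_0(\pi) \isom \BB$. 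To identify $Q$ with $\wt{Y}$ for some cubic fourfold $Y \supset P$, consider a moduli space $M$ of stable $\beta$-twisted sheaves on $T$ with a suitable Mukai vector. Using $\beta \cup c_1(\FF) \neq 0$ (the defining condition of $B(T,\FF)$), one shows $M \isom \P^2$ and that the universal $\beta$-twisted sheaf pushes forward to a rank-4 vector bundle of the form $\EE \isom W \tensor \OO_{\P^2} \oplus \OO_{\P^2}(-1)$ with $\dim W = 3$. The quadric bundle $Q$ then embeds as a divisor in $\P(\EE)$, which is the blow-up of $\P^5 = \P(V)$ along $P = \P(W)$; contracting the exceptional divisor yields the desired cubic fourfold $Y$.

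For uniqueness, suppose $Y$ and $Y'$ both represent $(T,\FF,\beta)$. Then $\pi : \wt{Y} \to \P^2$ and $\pi' : \wt{Y}'\to \P^2$ share the same discriminant cover $T$ with polarization $\FF$ and their even Clifford algebras define the same class in $B(T,\FF)$, hence agree in $\Az_2(T/\P^2)$. By the injectivity in Theorem~\ref{thm:main}, the associated line bundle-valued rank-4 quadratic forms $(\EE,q,\LL)$ and $(\EE',q',\LL')$ on $\P^2$ lie in the same projective similarity class. Since both $\EE$ and $\EE'$ must take the shape $W \tensor \OO_{\P^2} \oplus \OO_{\P^2}(-1)$ coming from the blow-up of $\P^5$ along a plane, the projective similarity induces a $\P^2$-isomorphism $\wt{Y} \isom \wt{Y}'$ that intertwines the embeddings into their respective $\P^3$-bundles over $\P^2$. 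Contracting the exceptional divisors yields an isomorphism $Y \isom Y'$ of cubic fourfolds.

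The main obstacle is the twisted-sheaves input in the existence step: verifying that $M \isom \P^2$ and that the pushforward of the universal $\beta$-twisted sheaf has the very specific shape $W \tensor \OO_{\P^2} \oplus \OO_{\P^2}(-1)$. This rests on a Mukai-vector computation tailored to the sextic double cover $T \to \P^2$, deformation-stability results for twisted sheaves on K3 surfaces, and crucially on the hypothesis $\beta \cup c_1(\FF) \neq 0$, which guarantees that $\beta$ has index $2$ at the generic point and thus gives rise to a genuine quadric \emph{surface} bundle (rather than degenerating to a lower-dimensional Severi--Brauer variety). Everything else — the descent from quadric bundles to Azumaya algebras, the reconstruction of the ambient projective bundle, and the uniqueness — follows formally from Theorem~\ref{thm:main} and the gluing machinery of \S\ref{sec:gluing}.
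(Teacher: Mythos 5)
Your overall skeleton is right in places, but the proposal contains a genuine gap at the heart of the uniqueness argument, and allocates the twisted-sheaves input to the wrong place.

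In the uniqueness step you write that the even Clifford algebras of $\wt{Y}$ and $\wt{Y}'$ ``define the same class in $B(T,\FF)$, hence agree in $\Az_2(T/\P^2)$.'' This inference is false as stated: two Azumaya quaternion algebras on $T$ with the same Brauer class need not be isomorphic as $\OO_T$-algebras, since $\Het^1(T,\PGL_2)\to\Br(T)$ has nontrivial fibers over a scheme of dimension $>0$. Recovering an isomorphism $\CliffB_0 \isom \CliffB_0'$ from equality of Brauer classes is precisely the nontrivial content of Lemma~\ref{lem:twisted_unique} in the paper, and this is where the twisted-sheaf theory (Mukai vector computation, stability for generic $T$, spherical objects with $(v,v)=-2$) is actually needed. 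This is also where the ``generic'' hypothesis on $T$ enters. Without this step, the chain $\beta \mapsto \CliffB_0 \mapsto (\EE,q,\LL) \mapsto Y$ is not well-defined up to isomorphism, and the injectivity of Theorem~\ref{thm:main} cannot be invoked. The rest of your uniqueness argument (projective similarity inducing a $\P^2$-isomorphism $\wt{Y}\isom\wt{Y}'$, then passing to $Y\isom Y'$) is essentially the paper's Lemma~\ref{lem:isomorphisms}, though the paper proves this carefully: it is not immediate that a projective similarity of the line-bundle-valued forms ``contracts'' to a linear automorphism of $\P^5$ fixing $P$, and the paper needs a short but explicit matrix calculation (including reducing the projective similarity to a genuine similarity using $\EE \tensor \NN \isom \EE \Rightarrow \NN \isom \OO$).

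Your existence argument is a different route from the paper's and is not made rigorous. The paper does \emph{not} reprove existence: the remark preceding the theorem attributes the statement that every class in $B(T,\FF)$ is represented to the global description of the period domain for cubic fourfolds containing a plane, citing van~Geemen and Hassett--V\'arilly-Alvarado. Your attempt to manufacture the ambient projective bundle $\P(\EE)$ with $\EE \isom W\tensor\OO_{\P^2}\oplus\OO_{\P^2}(-1)$ directly from a moduli space of $\beta$-twisted sheaves is an interesting idea but involves several unverified claims (that the moduli space is $\P^2$, that the pushforward of the universal sheaf has that very specific split form). If this could be carried out it would be a stronger, purely algebraic existence proof, but as written it is a sketch, not a proof, and it is not what the paper does.
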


We now explain the interest in this statement.  The global Torelli
theorem for cubic fourfolds states that a cubic fourfold $Y$ is
determined up to isomorphic by the polarized Hodge structure on
$H^4(Y,\Z)$.  Here \linedef{polarization} means a class $h^2 \in
H^4(Y,\Z)$ of self-intersection 3.  Voisin's approach
\cite{voisin:cubic_fourfolds} is to deal first with cubic fourfolds
containing a plane, then apply a deformation argument to handle the
general case.
For cubic fourfolds containing a plane, we can give an alternate
formulation, assuming the global Torelli theorem for K3 surfaces of
degree 2, which is a celebrated result of Piatetski-Shapiro and
Shafarevich \cite{p-s_shafarevich:torelli_K3}.

\begin{prop}
Assume the global Torelli theorem holds for a K3 surface $T$ of degree
2.  If the statement of Theorem~\ref{thm:Torelli} holds for $T$ then
the global Torelli theorem holds for all cubic fourfolds containing a
plane with $T$ as associated discriminant cover.
\end{prop}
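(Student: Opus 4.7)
The plan is to read off the triple $(T, \FF, \beta)$ with $\beta \in B(T, \FF)$ directly from the polarized Hodge structure on $H^4(Y, \Z)$, so that any polarized Hodge isometry $\phi : H^4(Y, \Z) \isom H^4(Y', \Z)$ forces an equality of these triples, after which Theorem~\ref{thm:Torelli} produces the isomorphism $Y \isom Y'$.

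First, I would establish a Hodge-theoretic description of $\beta = [\CliffB_0(\pi)]$. For a cubic fourfold $Y$ containing a plane $P$ with associated quadric bundle $\pi : \wt{Y} \to \P^2$ and discriminant cover $T$, the transcendental sub-Hodge structure $T_Y \subset H^4(Y, \Z)(1)$---the orthogonal complement of the integral Hodge classes---is a Hodge structure of K3 type. Following the analysis of Voisin~\cite[\S1]{voisin:cubic_fourfolds} and van Geemen~\cite[\S9.7]{van_geemen:Brauer_K3}, realized through the relative Fano variety of lines of $\pi$ (a moduli space of $\beta$-twisted sheaves on $T$), $T_Y$ is canonically Hodge-isometric to the primitive transcendental lattice of the $B$-field twist of $T$ by the canonical lift $[\CliffB_0(\pi)] \in \Het^2(T, \muu_2)$ of $\beta$ associated to the standard symplectic involution on $\CliffB_0(\pi)$ in the sense of~\cite{parimala_srinivas:brauer_group_involution}.

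Next, given $Y, Y'$ both containing planes $P, P'$ with the same discriminant cover $T$ and a polarized Hodge isometry $\phi : H^4(Y, \Z) \isom H^4(Y', \Z)$, the isometry $\phi$ preserves transcendental sub-Hodge structures, and by the previous step this yields a Hodge isometry between the $\beta_Y$- and $\beta_{Y'}$-twisted transcendental cohomologies of $T$. Applying the global Torelli theorem for the polarized K3 surface $(T, \FF)$, I would deduce that $\beta_Y$ and $\beta_{Y'}$ lie in the same $\Aut(T, \FF)$-orbit in $B(T, \FF)$; after composing with the induced automorphism of $(T, \FF)$ we may assume $\beta_Y = \beta_{Y'}$ in $B(T, \FF)$, and Theorem~\ref{thm:Torelli} then yields $Y \isom Y'$.

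The hard part is the first step: extracting $\beta$ not merely as an element of $\Brtwo(T)$, where $\beta_Y$ and $\beta_{Y'}$ might coincide for accidental reasons, but as a class in the refined group $B(T, \FF) \subset \Het^2(T, \muu_2)/\langle c_1(\FF) \rangle$ that serves as the index set of Theorem~\ref{thm:Torelli}. This refinement depends on the canonical symplectic-involution lift of the Brauer class of $\CliffB_0(\pi)$ to $\muu_2$-\'etale cohomology and on its compatibility with the $B$-field description of the twisted K3 Hodge structure appearing inside $H^4(Y, \Z)$, both of which are built into the Hodge-theoretic framework developed by Voisin and van Geemen.
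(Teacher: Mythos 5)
Your overall strategy is the same as the paper's: read the triple $(T,\FF,[\CliffB_0])$ off the polarized Hodge structure on $H^4(Y,\Z)$, apply the global Torelli theorem for $(T,\FF)$, and then invoke Theorem~\ref{thm:Torelli}. The packaging differs slightly --- you phrase everything through the transcendental sub-Hodge structure and its identification with the twisted transcendental lattice of $T$, whereas the paper works directly with the sublattice $\langle h^2, P\rangle$ and its orthogonal complement, citing the lattice-theoretic correspondence \cite[\S1,~Prop.~3]{voisin:cubic_fourfolds} to go back and forth between $(H^2(T,\Z), [\CliffB_0])$ and $(H^4(Y,\Z), \langle h^2, P\rangle)$.

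There is a genuine gap, however, at the point where you pass from the abstract Hodge isometry $\phi : H^4(Y,\Z)\isom H^4(Y',\Z)$ to an identification of the Brauer classes. A polarized Hodge isometry is not a priori compatible with the plane classes: it preserves $\langle h^2\rangle^\perp$, hence the transcendental lattices, but it need not send $P$ to $P'$, and without controlling its effect on the rank-two algebraic sublattice you cannot directly conclude that the induced isometry of twisted transcendental lattices arises from an automorphism of $(T,\FF)$ carrying $\beta_Y$ to $\beta_{Y'}$ in $B(T,\FF)$. The paper addresses exactly this by first invoking \cite[Prop.~3.2.4]{hassett:special} to replace $\Psi$ by a composite with a Hodge automorphism fixing $h^2$, so that the corrected isometry preserves the sublattice $\langle h^2, P\rangle$; only then does \cite[\S1,~Prop.~3]{voisin:cubic_fourfolds} produce the induced isomorphism $T\isom T'$ under which $[\CliffB_0]=[\CliffB_0']$. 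Your closing paragraph correctly flags this as ``the hard part'' but stops short of supplying the normalization step; as written, ``after composing with the induced automorphism of $(T,\FF)$ we may assume $\beta_Y=\beta_{Y'}$'' assumes precisely what needs to be proved.
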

\begin{proof}
Let $Y$ be a cubic fourfold containing a plane $P$ with 
discriminant cover $f : T \to \P^2$ and even Clifford algebra
$\CliffB_0$.
Consider the cycle class of $P$ in $H^4(Y,\Z)$.
Then $\FF=f\pullback\OO_{\P^2}(1)$ is a polarization on $T$, which
together with $[\CliffB_0] \in \Het^2(T,\muu_2)$, determines the
sublattice $\langle h^2,P \rangle^{\perp} \subset H^4(Y,\Z)$.
The key lattice-theoretic result we
use is \cite[\S1,~Prop.~3]{voisin:cubic_fourfolds}, which can be
stated as follows:\ the polarized Hodge structure $H^2(T,\Z)$ and the
class $[\CliffB_0] \in \Het^2(T,\muu_2)$ determines the Hodge
structure of $Y$; conversely, the polarized Hodge structure
$H^4(Y,\Z)$ and the sublattice $\langle h^2,P \rangle$ determines the
primitive Hodge structure of $T$, hence $T$ itself by the global
Torelli theorem for K3 surfaces of degree 2.

Now let $Y$ and $Y'$ be cubic fourfolds containing a plane $P$ with
associated discriminant covers $T$ and $T'$ and even Clifford algebras
$\CliffB_0$ and $\CliffB_0'$.  Assume that $\Psi : H^4(Y,\Z) \isom
H^4(Y,\Z)$ is an isomorphism of Hodge structures preserving the
polarization $h^2$.  By \cite[Prop.~3.2.4]{hassett:special}, we can
assume (by composing $\Psi$ with a Hodge automorphism fixing $h^2$)
that $\Psi$ preserves the sublattice $\langle h^2,P \rangle$.  By
\cite[\S1,~Prop.~3]{voisin:cubic_fourfolds}, $\Psi$ induces an
isomorphism $T\isom T'$, with respect to which
$[\CliffB_0]=[\CliffB_0']=\beta \in \Het^2(T,\muu_2) \isom
\Het^2(T',\muu_2)$.  Hence if there is at most a single cubic fourfold
representing $\beta$ up to isomorphism then $Y\isom Y'$.
\end{proof}

The following lemma, whose proof we could not find in the literature,
holds for smooth cubic hypersurfaces $Y \subset \P_k^{2r+1}$
containing a linear subspace of dimension $r$ over any field $k$.
Since $\Aut(\P_k^{2r+1}) \isom \PGL_{2r+2}(k)$ acts transitively on
the set of linear subspaces in $\P_k^{2r+1}$ of dimension $r$, any two
cubic hypersurfaces containing linear subspaces of dimension $r$ have
isomorphic representatives containing a common such linear subspace.

\begin{lemma}
\label{lem:isomorphisms}
Let $Y_1$ and $Y_2$ be smooth cubic hypersurfaces in $\P_k^{2r+1}$
containing a linear space $P$ of dimension $r$.  The associated
quadric bundles $\pi_1 : \wt{Y}_1 \to \P_k^r$ and $\pi_2 : \wt{Y}_2 \to
\P_k^r$ are $\P_k^r$-isomorphic if and only if the there is a linear
isomorphism $Y_1 \isom Y_2$ fixing $P$.
\end{lemma}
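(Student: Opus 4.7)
The plan is to use the identification $\mathrm{Bl}_P \P(V) \isom \P(\EE) \to \P(V/W) = \P_k^r$, where $\EE = W \tensor \OO_{\P_k^r} \oplus \OO_{\P_k^r}(-1)$, under which each strict transform $\wt{Y}_i$ is the zero locus of a line bundle-valued quadratic form $(\EE, q_i, \OO_{\P_k^r}(1))$ canonically obtained from the defining cubic $F_i$ of $Y_i$ by decomposing $F_i$ according to its $V/W$-degree. The direction ($\Leftarrow$) is then immediate: any linear $g \in \GL(V)$ with $g(W) = W$ and scalar action on $V/W$ lifts to a $\P_k^r$-automorphism of $\P(\EE)$ which sends $\wt{Y}_1$ onto $\wt{Y}_2$ whenever $g(Y_1) = Y_2$.

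For ($\Rightarrow$), I would apply Proposition~\ref{prop:proj_sim_quadric} to the two quadric bundles. Its hypotheses are met: each $q_i$ is generically regular (since $Y_i$ is smooth, the generic fiber of $\pi_i$ is a smooth quadric), and Proposition~\ref{prop:exactness}(b) supplies middle exactness of \eqref{eq:minimal} because $\P_k^r$ is locally factorial. A projective similarity between $(\EE, q_1, \OO_{\P_k^r}(1))$ and $(\EE, q_2, \OO_{\P_k^r}(1))$ involves a line bundle $\NN$ on $\P_k^r$ with $\NN^{\tensor 2} \isom \OO_{\P_k^r}$; since $\Pic(\P_k^r) = \Z$ is torsion free, $\NN$ must be trivial. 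Hence one obtains an isomorphism $\vp : \EE \isom \EE$ together with a scalar $\lambda \in k\mult = \Aut(\OO_{\P_k^r}(1))$ such that $q_2 \circ \vp = \lambda\, q_1$.

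The remaining step identifies $\Aut(\EE)$ with the subgroup $G \subset \GL(V)$ of block-upper-triangular matrices $\left(\begin{smallmatrix} A & B \\ 0 & cI \end{smallmatrix}\right)$ that preserve $W$ and act as the scalar $c$ on $V/W$. Since $\Hom(W \tensor \OO_{\P_k^r}, \OO_{\P_k^r}(-1)) = 0$, every automorphism of $\EE$ preserves the sub-line-bundle $\OO_{\P_k^r}(-1)$ and takes the block form $\left(\begin{smallmatrix} A & v \\ 0 & c \end{smallmatrix}\right)$ with $A \in \GL(W)$, $c \in k\mult$, and $v \in \Hom(\OO_{\P_k^r}(-1), W \tensor \OO_{\P_k^r}) \isom \Hom(V/W, W)$. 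The $\vp$ above thus corresponds to a unique $\tilde\vp \in G$, whose action on $\P(V)$ fixes $P$ setwise and induces the identity on $\P_k^r$; the $G$-equivariance of the cubic-to-quadratic-form dictionary then yields $\tilde\vp(Y_1) = Y_2$, which is the required linear isomorphism fixing $P$.

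The main obstacle I anticipate is the final bookkeeping: verifying that the canonical identification $\Aut(\EE) \isom G$ intertwines the tautological action of $G$ on cubic forms vanishing on $W$ with the action of $\Aut(\EE)$ on $H^0(\P_k^r, S^2 \EE\dual \tensor \OO_{\P_k^r}(1))$. This amounts to tracing the decomposition $F_i = F_i^{(1)} + F_i^{(2)} + F_i^{(3)}$ of a cubic vanishing on $W$ by its $V/W$-degree against the direct sum decomposition of $S^2 \EE\dual \tensor \OO_{\P_k^r}(1)$ into the three summands arising from the splitting of $\EE$, and checking equivariance term by term---essentially a coordinate calculation in the blow-up of $\P(V)$ at $P$.
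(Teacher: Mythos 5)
Your proposal follows essentially the same route as the paper: identify $\wt{Y}_l$ with the quadric bundle of $(\EE, q_l, \OO_{\P_k^r}(1))$, invoke Propositions~\ref{prop:proj_sim_quadric} and~\ref{prop:exactness} to upgrade the $\P_k^r$-isomorphism first to a projective similarity and then, since $\NN$ must be trivial, to a genuine similarity (the paper deduces $\NN\isom\OO$ from $\EE\tensor\NN\isom\EE$ rather than from torsion-freeness of $\Pic(\P_k^r)$, but both work), and read off a linear automorphism of $\P_k^{2r+1}$ from the block-triangular form of $\psi\in\GL(\EE)(\P_k^r)$. The ``final bookkeeping'' you flag as the main obstacle is precisely what the paper carries out: writing $\psi$ as a block matrix with blocks $H$, $v=G\cdot(x_0,\dotsc,x_r)^t$, $u$, one checks directly that the linear map $J$ with blocks $uI,\,0,\,G,\,H$ takes $Y_1$ to $Y_2$ and fixes $P$, the homogeneity of the cubic absorbing the scalar $u$.
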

\begin{proof}
Any linear isomorphism $Y_1 \isom Y_2$ fixing $P$ will induce an
isomorphism of blow-ups $\wt{Y}_1 \isom \wt{Y}_2$ commuting with the
projections from $P$.  Conversely, assume that $\wt{Y}_1$ and $\wt{Y}_2$ are
$\P_k^r$-isomorphic.  
Since $\PGL_{2r+2}(k)$ acts transitively on the
set of linear subspaces of dimension $r$, without loss of generality,
we can assume that $P = \{ x_0=\dotsm=x_r=0\}$ where
$(x_0:\dotsm:x_r:y_0:\dotsm:y_r)$ are homogeneous coordinates on
$\P_k^{2r+1}$.  For $l=1,2$, write $Y_l$ as
$$
\sum_{0 \leq m \leq n \leq r} a_{mn}^l y_m y_n +
\sum_{p=0}^r b_p^l y_p + c^l = 0
$$ 
for homogeneous linear forms $a_{mn}^l$, quadratic forms $b_p^l$, and
cubic forms $c^l$ in $k[x_0,\dotsc,x_r]$.  The blow-up of $\P_k^{2r+1}$
along $P$ is identified with the total space of the projective bundle
$\pi : \P(\EE)\to \P_k^r$, where $\EE =
\OO_{\P_k^r}^{r+1}\oplus\OO_{\P_k^r}(-1)$.  The homogeneous coordinates
$y_0,\dots,y_r$ correspond, in the blow-up, to a basis of global
sections of $\OO_{\P(\EE)}(1)$.  Let $z$ be a nonzero global section
of $\OO_{\P(\EE)}(1)\tensor\pi\pullback\OO_{\P_k^r}(-1)$.
Then $z$ is unique up to scaling, as we have
$$
\Gamma(\P(\EE),\OO_{\P(\EE)}(1)\tensor\pi\pullback\OO_{\P_k^r}(-1))
\isom
\Gamma(\P_k^r,\pi\pushforward\OO_{\P(\EE)}(1)\tensor\OO_{\P_k^r}(-1)) = 
\Gamma(\P_k^r,\EE\dual \tensor \OO_{\P_k^r}(-1)) = k
$$
by the projection formula.  Thus $(y_0:\dotsm:y_r:z)$ forms a relative
system of homogeneous coordinates on $\P(\EE)$ over $\P_k^r$.  Then
$\wt{Y}_l$ can be identified with the subscheme of $\P(\EE)$ defined
by the global section
$$
q_l(y_0,\dotsc,y_r,z)=\sum_{0 \leq m \leq n \leq r} a_{mn}^l y_m y_n +
\sum_{0 \leq p \leq r} b_p^l y_p z + c^lz^2 = 0
$$ 
of $\OO_{\P(\EE)}(2)\tensor\pi\pullback\OO_{\P_k^r}(1)$.  Under these
identifications, $\pi_l : \wt{Y}_l \to \P_k^r$ can be identified with
the restriction of $\pi$ to $\wt{Y}_l$, hence with the quadric bundle
associated to the line bundle-valued quadratic form $(\EE,
q_l,\OO_{\P_k^r}(1))$.
Since $Y_l$ and $P$ are smooth, so is $\wt{Y}_l$.  Thus $\pi_l :
\wt{Y}_l \to \P_k^r$ is flat, being a morphism from a Cohen--Macaulay
scheme to a regular scheme.  Thus by
Propositions~\ref{prop:proj_sim_quadric} and \ref{prop:exactness}, the
$\P_k^r$-isomorphism $\wt{Y}_1\isom\wt{Y}_2$ induces a projective
similarity $\psi$ between $q_1$ and $q_2$.  But as $\EE \tensor \NN
\isom \EE$ implies $\NN$ is trivial in $\Pic(\P_k^r)$, we have that
$\psi : q_1 \similar q_2$ is, in fact, a similarity.  In particular,
$\psi \in \GL(\EE)(\P_k^r)$, hence consists of a block matrix of the
form
$$
\begin{pmatrix}
H & v \\
0 & u
\end{pmatrix}
$$
where $H \in \GL(\OO_{\P_k^r}^{r+1})(\P_k^r) = \GL_{r+1}(k)$ and $u \in
\GL(\OO_{\P_k^r}(-1))(\P_k^r) = \Gm(\P_k^r) = k\mult$, while $v \in
\Hom_{\OO_{\P_k^r}}(\OO_{\P_k^r}(-1),\OO_{\P_k^r}^{r+1})$ $=
\Gamma(\P_k^r,\OO_{\P_k^r}(1))^{\oplus (r+1)}$ consists of a vector of
linear forms in $k[x_0,\dotsc,x_r]$.  Let $v = G \cdot
(x_0,\dotsc,x_r)^t$ for a matrix $G \in M_{r+1}(k)$.  Then writing $H
= (h_{ij})$ and $G = (g_{ij})$, we have that  
$\psi$ acts as 
$$
x_i \mapsto x_i, \qquad 
y_i \mapsto \sum_{0 \leq j \leq r} (h_{ij}y_j + g_{ij}x_jz), \qquad 
z \mapsto uz 
$$
and satisfies $q_2(\psi(y_0),\dotsc,\psi(y_r),\psi(z))=\lambda
q_1(y_0,\dotsc,y_r,z)$ for some $\lambda \in k\mult$.  Considering the matrix $J \in M_{2r+2}(k)$ with
$(r+1)\times(r+1)$ blocks
$$
J=
\begin{pmatrix}
uI & 0 \\
G  & H
\end{pmatrix}
$$
as a linear automorphism of $\P_k^{2r+1}$, then $J$ acts on $(x_0:\dotsm:x_r:y_0:\dotsm:y_r)$ as  
$$
x_i \mapsto ux_i, \qquad 
y_i \mapsto \sum_{0 \leq j \leq r} (h_{ij}y_j + g_{ij}x_j),
$$
and hence satisfies $q_2(J(y_0),\dotsc,J(y_r),1)=u\lambda
q_1(y_0,\dotsc,y_r,1)$ due to the homogeneity properties of $x_i$ and
$z$.  Thus $J$ is a linear automorphism taking $Y_1$ to $Y_2$ and
fixes $P$.
\end{proof}

Let $T$ be a K3 surface.
We shall freely use the notions of $\beta$-twisted sheaves, $B$-fields
associated to $\beta$, the $\beta$-twisted Chern character, and
$\beta$-twisted Mukai vectors from \cite{huybrechts_stellari}.  For a
Brauer class $\beta \in \Brtwo(T)$ we choose the rational $B$-field
$\beta/2 \in H^2(T,\Q)$. The $\beta$-twisted Mukai vector of a
$\beta$-twisted sheaf $\VV$ is
$$
v^B(\VV) = \text{ch}^B(\VV) \sqrt{\text{Td}_T} = \bigl(\rk \VV,
c_1^B(\VV), \rk \VV + \frac{1}{2}c_1^B(\VV) - c_2^B(\VV) \bigr) 
\in H^*(T,\Q)
$$ 
where $H^*(T,\Q)=\bigoplus_{i=0}^2 H^{2i}(T,\Q)$.  As in
\cite{mukai:moduli_bundles_K3}, one introduces the
Mukai pairing 
$$
(v,w) = v_2 \cup w_2 - v_0\cup w_4 - v_4 \cup w_0 \in H^4(T,\Q)\isom \Q
$$ 
for Mukai vectors $v = (v_0,v_2,v_4)$ and $w = (w_0,w_2,w_4)$.

By \cite[Thm.~3.16]{yoshioka:twisted_sheaves}, the moduli space of
stable $\beta$-twisted sheaves $\VV$ with Mukai vector $v=v^B(\VV)$
satisfying $(v,v)=2n$ is isomorphic to the Hilbert scheme
$\text{Hilb}^{{n}+1}_T$.  In particular, when $(v,v)=-2$, this moduli
space consists of one point; we give a direct proof of this
fact inspired by \cite[Cor.~3.6]{mukai:moduli_bundles_K3}.

\begin{lemma}
\label{lem:Exts}
Let $T$ be a K3 surface and $\beta \in \Brtwo(T)$ with chosen
$B$-field. Let $v \in H^*(T,\Q)$ with $(v,v)=-2$.  If $\VV$ and $\VV'$
are stable $\beta$-twisted sheaves with $v^B(\VV)=v^B(\VV')=v$ then
$\VV \isom \VV'$.
\end{lemma}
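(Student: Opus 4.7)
The plan is to run Mukai's classical rigidity argument for $(-2)$-classes on a K3 surface in the twisted setting, using the twisted Hirzebruch--Riemann--Roch formula together with Serre duality and the standard stability consequence that nonzero maps between stable objects of the same reduced Hilbert polynomial are isomorphisms.

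First, I would compute the twisted Euler characteristic. For $\beta$-twisted sheaves on a K3 surface, the twisted Hirzebruch--Riemann--Roch formula of Huybrechts--Stellari (using the chosen $B$-field $\beta/2$) gives
\begin{equation*}
\chi(\VV,\VV') \;=\; \sum_{i\geq 0}(-1)^i\dim\Ext^i(\VV,\VV') \;=\; -\bigl(v^B(\VV),\,v^B(\VV')\bigr) \;=\; -(v,v) \;=\; 2.
\end{equation*}
Here the sign is the usual one arising from $\mathrm{Td}_T$ paired against $v^B(\VV)^\vee \cdot v^B(\VV')$ and the fact that $\omega_T$ is trivial, exactly as in \cite{mukai:moduli_bundles_K3}, but now for $\beta$-twisted coherent sheaves on $T$.

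Next, I would invoke Serre duality for $\beta$-twisted sheaves on the K3 surface $T$, which yields a canonical isomorphism $\Ext^2(\VV,\VV')\isom \Hom(\VV',\VV)\dual$. Suppose for contradiction that $\VV\not\isom\VV'$. Since $\VV$ and $\VV'$ are stable $\beta$-twisted sheaves with the same Mukai vector, they share the same reduced Hilbert polynomial (with respect to the polarization chosen to define stability), so any nonzero morphism $\VV\to\VV'$ or $\VV'\to\VV$ must be an isomorphism. Hence $\Hom(\VV,\VV')=0$ and, by Serre duality, $\Ext^2(\VV,\VV')=0$ as well. Plugging into the Euler characteristic computed above,
\begin{equation*}
2 \;=\; \chi(\VV,\VV') \;=\; -\dim\Ext^1(\VV,\VV') \;\leq\; 0,
\end{equation*}
a contradiction. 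Therefore $\Hom(\VV,\VV')\neq 0$, and by stability any such nonzero map is an isomorphism, so $\VV\isom\VV'$.

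The main technical point to verify is that both of the ingredients used—the twisted Hirzebruch--Riemann--Roch formula for the chosen rational $B$-field $\beta/2$ and Serre duality for $\beta$-twisted coherent sheaves—hold in the form stated; for this I would refer to Huybrechts--Stellari and Yoshioka, which is why the hypothesis of the lemma singles out a chosen $B$-field. Beyond that, the argument is essentially formal: the $(-2)$-class forces $\chi=2$, so Hom cannot vanish, and stability upgrades any nonzero map to an isomorphism. I do not anticipate any obstacle beyond correctly setting up the twisted sign conventions, which is the only place the argument could go wrong.
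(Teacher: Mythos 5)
Your proof is correct and follows essentially the same route as the paper's: both compute $\chi(\VV,\VV')=2$ from the Mukai pairing, deduce via Serre duality that $\Hom(\VV,\VV')$ or $\Hom(\VV',\VV)$ is nonzero, and then use stability to upgrade a nonzero map to an isomorphism. Your write-up is somewhat more explicit about the twisted Serre duality step, which the paper compresses into the phrase ``a Riemann--Roch calculation shows,'' but the argument is the same one, modeled on Mukai's rigidity lemma for $(-2)$-classes.
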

\begin{proof}
Assume that $\beta$-twisted sheaves $\VV$ and $\VV'$ have the same
Mukai vector $v \in H^2(T,\Q)$.  Since $-2 = (v,v) =
\chi(\VV,\VV)=\chi(\VV,\VV')$, a Riemann--Roch calculation shows that
either $\Hom(\VV,\VV') \neq 0$ or $\Hom(\VV,\VV') \neq 0$.  Without
loss of generality, assume $\Hom(\VV,\VV') \neq 0$.  Then since $\VV$
is stable, a nonzero map $\VV \to \VV'$ must be injective.  Since $\VV'$ is
stable, the map is an isomorphism.
\end{proof}

\begin{lemma}
\label{lem:stable}
Let $T$ be a K3 surface of degree 2 and $\beta \in \Brtwo(T)$ with
chosen $B$-field.  Let $Y$ be a smooth cubic fourfold containing a
plane whose even Clifford algebra $\CliffB_0$ represents $\beta \in
\Brtwo(T)$.  If $\VV_0$ is a $\beta$-twisted sheaf associated to
$\CliffB_0$ then $(v^B(\VV_0),v^B(\VV_0))=-2$. Furthermore, if $T$ is
generic then $\VV_0$ is stable.
\end{lemma}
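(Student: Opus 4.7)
The plan is to compute $(v^B(\VV_0), v^B(\VV_0))$ via the twisted Riemann--Roch identity $(v^B(\VV_0), v^B(\VV_0)) = -\chi(\VV_0, \VV_0)$, which holds on a K3 surface. Since $\CliffB_0$ is Azumaya of degree $2$ and $\EEnd(\VV_0) \isom \CliffB_0$, the $\beta$-twisted sheaf $\VV_0$ is locally free of rank $2$, and
$$\chi(\VV_0, \VV_0) = \chi(\EEnd(\VV_0)) = \chi(\CliffB_0).$$
Thus the problem reduces to computing the untwisted Euler characteristic of $\CliffB_0$ as a coherent sheaf on $T$.

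To compute $\chi(\CliffB_0)$, I would push forward via $f : T \to \P^2$: by Definition~\ref{defin:discriminant_cover}, the pushforward $f\pushforward \CliffB_0$ recovers the even Clifford algebra $\CliffAlg_0(\pi)$ of the quadric bundle $\pi : \wt{Y} \to \P^2$ as an $\OO_{\P^2}$-module. Hence $\chi_T(\CliffB_0) = \chi_{\P^2}(\CliffAlg_0(\pi))$. Applying the standard $\OO_{\P^2}$-module decomposition
$$\CliffAlg_0(\pi) \isom \OO_{\P^2} \oplus \exterior^2 \EE \tensor \LL\inv \oplus \det \EE \tensor \LL^{\tensor -2}$$
to the rank-$4$ quadratic form $(\EE, q, \LL)$ coming from projection from the plane, with $\EE = W \tensor \OO_{\P^2} \oplus \OO_{\P^2}(-1)$ and $\LL = \OO_{\P^2}(1)$, a direct term-by-term Riemann--Roch count on $\P^2$ (using $\chi(\OO_{\P^2}(d)) = \binom{d+2}{2}$ and Serre duality for negative $d$) yields contributions $1 + 0 + 1 = 2$. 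So $\chi(\CliffB_0) = 2$, whence $(v^B(\VV_0), v^B(\VV_0)) = -2$.

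For stability, the key observation is that for generic $T$ (i.e., $\Pic(T) = \Z \cdot \FF$), the natural map $B(T, \FF) \to \Brtwo(T)$ is injective, so whenever $[\CliffB_0]=\beta$ is represented by a cubic fourfold as above, $\beta \in \Brtwo(T)$ is nontrivial. Now rank-$1$ torsion-free $\beta$-twisted sheaves cannot exist: the reflexive hull of such a sheaf would be a $\beta$-twisted line bundle on the regular surface $T$, trivializing $\beta$ in $\Brtwo(T)$. Since $\VV_0$ is locally free of rank $2$, any potential destabilizing torsion-free subsheaf would have rank $1$; the previous argument rules this out, so $\VV_0$ is automatically $\mu$-stable with respect to any polarization, in particular $\FF$.

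The main obstacle is the careful setup of the twisted Mukai-vector formalism: one must fix the rational $B$-field $\beta/2 \in H^2(T, \Q)$ lifting $\beta$ in order to make sense of $v^B(\VV_0)$ and the self-pairing $(v^B(\VV_0), v^B(\VV_0))$. Once this is done, only the rank of $\VV_0$ (equal to $2$) and $\chi(\CliffB_0) = 2$ enter the final pairing, and the combinatorial heart of the argument is the Riemann--Roch count on $\P^2$. The stability claim is then formal from the nontriviality of $\beta$ on generic $T$ together with the rank obstruction for twisted subsheaves.
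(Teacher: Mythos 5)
Your computation of $(v^B(\VV_0),v^B(\VV_0))=-2$ is essentially the paper's argument, just packaged slightly differently: the paper also reduces via twisted Riemann--Roch to the cohomology of $\CliffAlg_0$ on $\P^2$, but it goes further and records the individual groups $H^i(\P^2,\CliffAlg_0)$ to show $\VV_0$ is spherical ($\Ext^0=\Ext^2=\C$, $\Ext^1=0$), whereas you only extract the Euler characteristic. The decomposition $\CliffAlg_0\isom\OO\oplus\exterior^2\EE\tensor\LL\inv\oplus\det\EE\tensor\LL^{-2}$ you use is exactly what the paper unpacks to $\OO\oplus\OO(-1)^3\oplus\OO(-2)^3\oplus\OO(-3)$, and the count $1+0+1=2$ matches. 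So the first half is the same approach; you lose nothing for the Euler-characteristic statement, but note the sphericity (in particular simplicity and rigidity) is what the paper leans on for stability.

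For stability your route is genuinely different from the paper's. The paper cites Mukai's result that a simple rigid sheaf on a K3 is $\mu$-stable for a suitable polarization (Yoshioka's twisted analogue), using the sphericity just established. You instead observe that for nontrivial $\beta$ there can be no rank-one torsion-free $\beta$-twisted subsheaf (its reflexive hull would be a $\beta$-twisted line bundle, trivializing $\beta$), so the rank-two sheaf $\VV_0$ is automatically $\mu$-stable. This is a cleaner and more elementary argument when it applies, and it makes the role of genericity more transparent than the citation does. However, there is a gap in the way you justify $\beta\neq 0$: you assert that injectivity of $B(T,\FF)\to\Brtwo(T)$ implies any class represented by a cubic fourfold is nontrivial, but the paper's own remark explicitly says the represented set is $B(T,\FF)\cup\{0\}$, and a cubic fourfold containing two disjoint planes yields a split $\CliffB_0$, i.e.\ $\beta=0$, even when $T$ is generic. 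So genericity of $T$ does not by itself force $\beta\neq 0$. Your stability argument therefore only covers the case $\beta\neq 0$; the paper's Mukai/Yoshioka route covers $\beta=0$ too (since simplicity and rigidity of $\VV_0$ hold regardless). For the downstream use in the Torelli theorem only $\beta\in B(T,\FF)$, hence $\beta\neq0$, is needed, so your argument suffices for that application -- but as a proof of the lemma as stated it is incomplete unless you add the hypothesis $\beta\neq 0$ or fall back on the rigid-simple argument when $\beta=0$.
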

\begin{proof}
By the $\beta$-twisted Riemann--Roch theorem, we have
$$
- (v^B(\VV_0),v^B(\VV_0) ) = \chi(\VV_0,\VV_0) = \sum_{i=0}^2 \Ext^i_T(\VV_0,\VV_0).
$$
Then $v^B(\VV_0)=2$ results from the fact that $\VV_0$ is a
\linedef{spherical} object, i.e., $\Ext^i_T(\VV_0,\VV_0)=\C$ for
$i=0,2$ and $\Ext^1(\VV_0,\VV_0)=0$.  Indeed, as in \cite[Rem.~2.1]{macri_stellari},  we have
$\Ext_T^i(\VV_0,\VV_0)=H^i(\P^2,\CliffAlg_0)$, which can be calculated directly
using the fact that, as $\OO_{\P^2}$-algebras,  
\begin{equation}
\label{eq:CliffAlg}
\CliffB_0 \isom \OO_{\P^2} \oplus \OO_{\P^2} (-3) \oplus \OO_{\P^2} (-1)^3 \oplus \OO_{\P^2} (-2)^3
\end{equation} 
If $T$
is generic, stability follows from
\cite[Prop.~3.14]{mukai:moduli_bundles_K3}, see also \cite[Prop.~3.12]{yoshioka:twisted_sheaves}.
\end{proof}

\begin{lemma}
\label{lem:twisted_unique}
Let $T$ be a K3 surface of degree 2.  Let $Y$ and $Y'$ be
smooth cubic fourfolds containing a plane whose respective
even Clifford algebras $\CliffB_0$ and $\CliffB_0'$ represent the same
$\beta \in \Brtwo(T)$.  If 
$T$ is generic then $\CliffB_0 \isom \CliffB_0'$.
\end{lemma}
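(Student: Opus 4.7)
The plan is to lift each Azumaya algebra to a $\beta$-twisted sheaf on $T$ and then apply Lemma~\ref{lem:Exts} to show that the lifts are isomorphic up to a line-bundle twist; passing to endomorphism algebras will yield $\CliffB_0 \isom \CliffB_0'$. Specifically, I will choose rank-$2$ $\beta$-twisted sheaves $\VV_0$ and $\VV_0'$ on $T$ with $\EEnd(\VV_0) \isom \CliffB_0$ and $\EEnd(\VV_0') \isom \CliffB_0'$. Such lifts exist because both algebras are Azumaya of degree~$2$ representing $\beta$, and each lift is determined only up to tensoring by an (untwisted) line bundle $\LL \in \Pic(T)$, which does not affect the endomorphism algebra. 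Hence it will suffice to exhibit some $\LL$ with $\VV_0 \isom \VV_0' \tensor \LL$.

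Lemma~\ref{lem:stable}, applied to both $Y$ and $Y'$, will guarantee that $\VV_0$ and $\VV_0'$ are stable with twisted Mukai vectors $v = v^B(\VV_0)$ and $v' = v^B(\VV_0')$ satisfying $(v,v) = (v',v') = -2$. Lemma~\ref{lem:Exts} then reduces the problem to finding $\LL \in \Pic(T)$ so that $v^B(\VV_0' \tensor \LL) = v$. Tensoring by $\LL$ leaves the rank $v_0 = 2$ fixed and shifts the degree-$2$ component by $2\,c_1(\LL)$; moreover, once $v_0$ and the degree-$2$ component agree, the self-pairing identity $(v,v) = -2$ (which reads $(c_1^B)^2 - 4\,v_4 = -2$) pins the degree-$4$ component uniquely. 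Thus the whole matter further reduces to matching $c_1^B(\VV_0)$ and $c_1^B(\VV_0')$ modulo $2\,\Pic(T)$.

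This is where the genericity of $T$ plays its essential role: since $\Pic(T) = \Z \cdot \FF$, the quotient $\Pic(T)/2\,\Pic(T) \isom \Z/2\Z$, and the obstruction to matching is a single bit of information. I plan to verify that this bit vanishes by tracking how the rational $B$-field $\beta/2$ interacts with the integer Chern class of an Azumaya model of $\VV_0$, and by using the self-pairing constraint $(v,v) = -2$ together with $\FF^2 = 2$ to pin down the parity of the coefficient of $\FF$ in $c_1^B$ modulo $2\,\Pic(T)$. Once this parity check is in place, a suitable $\LL$ equalizes the Mukai vectors, Lemma~\ref{lem:Exts} gives $\VV_0 \isom \VV_0' \tensor \LL$, and taking endomorphism algebras produces $\CliffB_0 \isom \CliffB_0'$. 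The main obstacle is precisely this parity verification: everything else is a formal application of the twisted-sheaf machinery recalled in Lemmas~\ref{lem:Exts} and \ref{lem:stable}, and the genericity of $T$ must be used essentially in order to ensure that $\Pic(T)$ is small enough for the single-bit obstruction to be tractable.
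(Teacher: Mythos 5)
Your overall strategy is the same as the paper's: lift $\CliffB_0$ and $\CliffB_0'$ to $\beta$-twisted sheaves $\VV_0$ and $\VV_0'$, show that their twisted Mukai vectors agree after tensoring $\VV_0'$ by some $\NN \in \Pic(T)$, and then conclude via Lemmas~\ref{lem:stable} and \ref{lem:Exts} that $\VV_0 \isom \VV_0'\tensor\NN$, hence $\CliffB_0 \isom \EEnd(\VV_0) \isom \EEnd(\VV_0') \isom \CliffB_0'$. You also correctly isolate the crux: matching the degree-two components $c_1^B(\VV_0)$ and $c_1^B(\VV_0')$ modulo $2\Pic(T)$.

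However, there is a genuine gap at exactly that crux. You propose to extract the needed parity from the self-pairing constraint $(v,v)=-2$, i.e.\ from $(c_1^B)^2 \equiv 2 \pmod 4$, together with $\FF^2 = 2$. This does not work in the case of interest. Writing $c_1^B(\VV_0) = a\FF + \beta$ (with $\beta$ the chosen integral $B$-field lift and $c_1(\VV_0) = a\FF$, using $\Pic(T)=\Z\FF$), one gets
$$
(c_1^B)^2 = 2a^2 + 2a(\FF\cdot\beta) + \beta^2 = 2a(a + \FF\cdot\beta) + \beta^2.
$$
When $\FF\cdot\beta$ is odd, the product $a(a+\FF\cdot\beta)$ is automatically even for every integer $a$, so the constraint $(c_1^B)^2 \equiv 2 \pmod 4$ reduces to $\beta^2 \equiv 2 \pmod 4$ and imposes \emph{no} condition on the parity of $a$. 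But $\FF\cdot\beta$ odd is precisely the condition $\beta \in B(T,\FF)$ from the period-domain description, which is the regime where $\beta$ is actually represented by a cubic fourfold. So the ``single bit'' you need is invisible to the self-pairing, and the planned parity verification cannot be carried out along those lines.

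The paper closes this gap by a different and more concrete device: it invokes \cite[Lemma~3.1]{macri_stellari} together with the explicit $\OO_{\P^2}$-module decomposition \eqref{eq:CliffAlg} of the even Clifford algebra, which is the \emph{same} for every smooth cubic fourfold containing a plane (it depends only on the numerics of the bundle $\EE = W\tensor\OO_{\P'} \oplus \OO_{\P'}(-1)$, not on $Y$). From this one computes the twisted Mukai vectors of $\VV_0$ and $\VV_0'$ directly and sees they agree up to a line bundle twist. If you want to complete your argument, you should replace the parity heuristic with this explicit computation (or an equivalent one) of $v^B(\VV_0)$ from the module structure of $\CliffB_0$ over $\P^2$; the rest of your write-up is sound.
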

\begin{proof}
Let $\VV_0$ and $\VV_0'$ be $\beta$-twisted sheaves associated to
$\CliffB_0$ and $\CliffB_0$, respectively.  A consequence of
\cite[Lemma~3.1]{macri_stellari} and \eqref{eq:CliffAlg} is that
$v=v^B(\VV_0)=v^{B}(\VV_0'\tensor \NN)$ for some line bundle $\NN$ on
$T$.  Replacing $\VV_0'$ by $\VV_0'\tensor\NN\dual$, we can assume
that $v^B(\VV_0)=v^{B}(\VV_0')$.  By Lemma~\ref{lem:stable}, we have
$(v,v) = -2$ and that $\VV_0$ and $\VV_0'$ are stable.  Hence by
Lemma~\ref{lem:Exts},
we have $\VV_0\isom\VV_0'$ as $\beta$-twisted
sheaves, hence $\CliffB_0 \isom \EEnd(\VV_0) \isom \EEnd(\VV_0') \isom
\CliffB_0'$.
\end{proof}

\begin{proof}[Proof of Theorem~\ref{thm:Torelli}]
Suppose that $Y$ and $Y'$ are smooth cubic fourfolds containing a
plane whose associated even Clifford algebras $\CliffB_0$ and
$\CliffB_0'$ represent the same class $\beta \in B(T,\FF) \subset
\Het^2(T,\muu_2)/\langle c_1(\FF) \rangle \isom \Brtwo(T)$.  By
Lemma~\ref{lem:twisted_unique}, we have $\CliffB_0 \isom \CliffB_0'$.
By Theorem~\ref{thm:main}, the quadric surface bundles $\pi : \wt{Y} \to \P^2$ and
$\pi' : \wt{Y}' \to \P^2$ are $\P^2$-isomorphic.  Finally, by
Lemma~\ref{lem:isomorphisms}, we have $Y \isom Y'$.
\end{proof}

\providecommand{\bysame}{\leavevmode\hbox to3em{\hrulefill}\thinspace}


\begin{thebibliography}{10}
\bibitem{artin:Brauer-Severi}
M.~Artin, \emph{Brauer-{S}everi varieties}, Brauer groups in ring theory and
  algebraic geometry ({W}ilrijk, 1981), Lecture Notes in Math., vol.\ 917,
  Springer, Berlin, 1982, pp.~194--210.

\bibitem{auel:clifford}
A.~Auel, \emph{Clifford invariants of line bundle-valued quadratic forms},
  MPIM preprint series 2011-33, 2011.

\bibitem{auel_bernardara_bolognesi:quadrics}
A.~Auel, M.~Bernardara, and M.~Bolognesi, \emph{Fibrations in
  complete intersections of quadrics, {C}lifford algebras, derived categories,
  and rationality problems}, preprint arXiv:1109.6938, 2011.

\bibitem{ABBV:pfaffian}
A.~Auel, M.~Bernardara, M.~Bolognesi, and A.~V{\'a}rilly-Alvarado, 
\emph{Rational cubic fourfolds containing a plane with
  nontrivial clifford invariant}, preprint arXiv:1205.0237, 2012.

\bibitem{auslander_goldman:brauer_group_commutative_ring}
M.~Auslander and O.~Goldman, \emph{The {B}rauer group of a commutative
  ring}, Trans.\ Amer.\ Math.\ Soc.\ \textbf{97} (1960), 367--409.

\bibitem{auslander_goldman:maximal_orders}
\bysame, \emph{Maximal orders}, Trans.\ Amer.\ Math.\ Soc.\ \textbf{97} (1960),
  1--24.

\bibitem{baeza:semilocal_rings}
R.~Baeza, \emph{Quadratic forms over semilocal rings}, Lecture Notes in
  Mathematics, Vol.\ 655, Springer-Verlag, Berlin, 1978.

\bibitem{balaji_ternary}
V.~Balaji, \emph{Line-bundle valued ternary
  quadratic forms over schemes}, J.\ Pure Appl.\ Algebra \textbf{208} (2007),
  237--259.

\bibitem{balmer_calmes:lax}
P.~Balmer and B.~Calm{\`{e}}s, \emph{Bases of total {W}itt groups and
lax-similitude}, J.\ Algebra Appl. \textbf{11}, no.\ 3 (2012), 24 pages.

\bibitem{bhargava:ICM}
M.~Bhargava, \emph{Higher composition laws and applications}, International
  {C}ongress of {M}athematicians.\ {V}ol.\ {II}, Eur.\ Math.\ Soc., Z\"urich, 2006,
  pp.~271--294.

\bibitem{bichsel:thesis}
W.\ Bichsel, \emph{Quadratische {R\"a}ume mit {W}erten in invertierbaren
  {M}oduln}, Ph.D. thesis, ETH Z{\"{u}}rich, 1985.

\bibitem{bichsel_knus:values_line_bundles}
W.\ Bichsel and M.-A.\ Knus, \emph{Quadratic forms with values in line bundles},
  Contemp.\ Math.\ \textbf{155} (1994), 293--306.

\bibitem{caenepeel_van_oystaeyen}
S.~Caenepeel and F.~van Oystaeyen, \emph{Quadratic forms with values
  in invertible modules}, {K}-{T}heory \textbf{7} (1993), 23--40.

\bibitem{colliot-thelene:semi-local}
J.-L.~Colliot-Th{\'e}l{\`e}ne, \emph{Formes quadratiques sur les anneaux
  semi-locaux r\'eguliers}, Bull.\ Soc.\ Math.\ France M\'em.\ (1979), no.~59,
  13--31, Colloque sur les Formes Quadratiques, 2 (Montpellier, 1977).

\bibitem{colliot-thelene_sansuc:fibres_quadratiques}
J.-L. Colliot-Th{\'e}l{\`e}ne and J.-J. Sansuc, \emph{Fibr\'es quadratiques et
  composantes connexes r\'eelles}, Math.\ Ann.\ \textbf{244} (1979), no.~2,
  105--134.


\bibitem{colliot_skorobogatov:quadriques}
J.-L.~Colliot-Th{\'e}l{\`e}ne and A.~N.~Skorobogatov, \emph{Groupe de
  {C}how des z\'ero-cycles sur les fibr\'es en quadriques}, $K$-Theory
  \textbf{7} (1993), no.~5, 477--500.

\bibitem{conrad_gabber_prasad}
B.~Conrad, O.~Gabber, and G.~Prasad, \emph{Pseudo-reductive groups},
  New Mathematical Monographs, vol.~17, Cambridge University Press, Cambridge,
  2010.

\bibitem{delone_faddeev}
B.~N.~Delone and D.~K.~Faddeev, \emph{The theory of irrationalities of the
  third degree}, Translations of Mathematical Monographs, Vol. 10, American
  Mathematical Society, Providence, R.I., 1964.

\bibitem{demazure_gabriel}
M.~Demazure and P.~Gabriel, \emph{Groupes alg{\'{e}}briques. {T}ome
  {I}: G{\'{e}}om{\'{e}}trie alg{\'{e}}brique, g{\'{e}}n{\'{e}}ralit{\'{e}}s,
  groupes commutatifs. {A}vec un appendice, \emph{{C}orps de classes local},
  par {M}ichiel {H}azewinkel}, Masson {\&} Cie, {\'{E}}diteur, Paris;
  North-Holland Publishing Company, Amsterdam, 1970.

\bibitem{eichler:orthogonal_groups}
M.~Eichler, \emph{Quadratische {F}ormen und orthogonale {G}ruppen}, Die
  Grundlehren der mathematischen Wissenschaften in Einzeldarstellungen mit
  besonderer Ber{\"u}cksichtigung der Anwendungsgebiete. Band LXIII,
  Springer-Verlag, Berlin, 1952.

\bibitem{elman_karpenko_merkurjev}
R.~Elman, N.~Karpenko, and A.~Merkurjev, \emph{The algebraic
  and geometric theory of quadratic forms}, American Mathematical Society
  Colloquium Publications, vol.~56, American Mathematical Society, Providence,
  RI, 2008.

\bibitem{fulton:intersection_theory}
W.~Fulton, \emph{Intersection theory}, Ergebnisse der Mathematik und ihrer
  Grenzgebiete (3) [Results in Mathematics and Related Areas (3)], vol.~2,
  Springer-Verlag, Berlin, 1984.

\bibitem{gross_lucianovic}
B.~H.~Gross and M.~W.~Lucianovic, \emph{On cubic rings and quaternion
  rings}, J.\ Number Theory \textbf{129} (2009), no.~6, 1468--1478.

\bibitem{EGA4}
A.~Grothendieck, \emph{\'{E}l\'ements de g\'eom\'etrie alg\'ebrique. {IV}.
  \'{E}tude locale des sch\'emas et des morphismes de sch\'emas. {I}, {II},
  {III}, {IV}}, Inst. Hautes \'Etudes Sci. Publ. Math. (1964, 1965, 1966,
  1967), no.~20, 24, 28, 32.

\bibitem{grothendieck:Brauer_II}
\bysame, \emph{Le groupe de {B}rauer. {II}. {T}h\'eorie
  cohomologique}, Dix {E}xpos\'es sur la {C}ohomologie des {S}ch\'emas,
  North-Holland, Amsterdam, 1968, pp.~67--87.

\bibitem{hartshorne:algebraic_geometry}
R.~Hartshorne, \emph{{A}lgebraic geometry}, {G}raduate {T}exts {M}ath.,
  vol.~52, {S}pringer-{V}erlag, {N}ew {Y}ork, 1977.

\bibitem{hassett:special}
B.~Hassett, \emph{Special cubic fourfolds}, Compositio Math.\ \textbf{120}
  (2000), no.~1, 1--23.

\bibitem{hassett_tschinkel:spaces_of_sections}
B.~Hassett and Y.~Tschinkel, 
\emph{Spaces of sections of quadric surface fibrations over curves}, 
Compact Moduli and Vector Bundles (University of Georgia 2010), V.\ Alexeev et al.\ eds.,
Contemporary Mathematics \textbf{564} (2012), 227--249.

\bibitem{hassett_varilly_varilly}
B.~Hassett, A.~V{\'a}rilly-Alvarado, and P.~Varilly,
  \emph{Transcendental obstructions to weak approximation on general {K}3
  surfaces}, Adv.\ Math.\ \textbf{228} (2011), no.~3, 1377--1404.

\bibitem{huybrechts_stellari}
D.~Huybrechts and P.~Stellari, \emph{Equivalences of twisted {$K3$}
  surfaces}, Math.\ Ann.\ \textbf{332} (2005), no.~4, 901--936.

\bibitem{kapranov:derived}
M.~M.~Kapranov, \emph{On the derived categories of coherent sheaves on some
  homogeneous spaces}, Invent.\ {M}ath.\ \textbf{92} (1988), 479--508.

\bibitem{knus:quadratic_hermitian_forms}
M.-A.~Knus, \emph{Quadratic and hermitian forms over rings},
  {S}pringer-{V}erlag, {B}erlin, 1991.

\bibitem{book_of_involutions}
M.-A.~Knus, A.~Merkurjev, M.~Rost, and J.-P.~Tignol,
  \emph{The book of involutions}, Colloquium Publications, vol.~44, AMS, 1998.

\bibitem{knus_parimala_sridharan:rank_4}
M.-A.~Knus, R.~Parimala, and R.~Sridharan, \emph{On rank {$4$} quadratic spaces
  with given {A}rf and {W}itt invariants}, Math.\ Ann.\ \textbf{274} (1986),
  no.~2, 181--198.

\bibitem{knus_parimala_sridharan:rank_6_via_pfaffians}
\bysame, \emph{A classification of rank 6
  quadratic spaces via pfaffians}, J.\ reine angew.\ Math.\ \textbf{398} (1989),
  187--218.

\bibitem{kuznetsov:quadrics}
A.~Kuznetsov, \emph{Derived categories of quadric fibrations and
  intersections of quadrics}, Adv.\ Math.\ \textbf{218} (2008), no.~5,
  1340--1369.

\bibitem{macri_stellari}
E.~Macr{\`\i} and P.~Stellari, \emph{Fano varieties of cubic fourfolds
  containing a plane}, Math.\ Ann., to appear.

\bibitem{matsumura:commutative_ring_theory}
H.~Matsumura, \emph{Commutative ring theory}, second ed., Cambridge
  Studies in Advanced Mathematics, vol.~8, Cambridge University Press,
  Cambridge, 1989, Translated from the Japanese by M. Reid.

\bibitem{mukai:moduli_bundles_K3}
S.~Mukai, \emph{On the moduli space of bundles on {$K3$} surfaces. {I}}, Vector
  bundles on algebraic varieties ({B}ombay, 1984), Tata Inst.\ Fund.\ Res.\ Stud.\
  Math., vol.~11, Tata Inst.\ Fund.\ Res., Bombay, 1987, pp.~341--413.

\bibitem{ojanguren:polynomial_algebras}
M.~Ojanguren, \emph{Formes quadratiques sur les alg{\`e}bres de polynomes},
  C.\ R.\ Acad.\ Sci.\ Paris S{\'e}r.\ A-B \textbf{287} (1978), no.~9, A695--A698.

\bibitem{parimala_ojanguren:quadratic_forms_complete_local_rings}
M.~Ojanguren and R.~Parimala, \emph{Quadratic forms over complete local
  rings}, Advances in algebra and geometry ({H}yderabad, 2001), Hindustan Book
  Agency, New Delhi, 2003, pp.~53--55.

\bibitem{parimala:Dedekind_rings}
R.~Parimala, \emph{Quadratic forms over polynomial rings over {D}edekind
  domains}, Amer. J. Math. \textbf{103} (1981), no.~2, 289--296.

\bibitem{parimala_sridharan:norms_and_pfaffians}
R.\ Parimala and R.\ Sridharan, \emph{Reduced norms and pfaffians via
  {B}rauer-{S}everi schemes}, Contemp.\ Math.\ \textbf{155} (1994), 351--363.

\bibitem{parimala_srinivas:brauer_group_involution}
R.\ Parimala and V.\ Srinivas, \emph{Analogues of the {B}rauer group for
  algebras with involution}, Duke Math.\ J.\ \textbf{66} (1992), no.~2, 207--237.

\bibitem{p-s_shafarevich:torelli_K3}
I.~I.~Pjatecki{\u\i}-{\v{S}}apiro and I.~R.~{\v{S}}afarevi{\v{c}},
  \emph{Torelli's theorem for algebraic surfaces of type {${\rm K}3$}}, Izv.\
  Akad.\ Nauk SSSR Ser.\ Mat.\ \textbf{35} (1971), 530--572.

\bibitem{roy:cancellation}
A.~Roy, \emph{Cancellation of quadratic form over commutative rings}, J.\
  Algebra \textbf{10} (1968), 286--298.

\bibitem{scharlau:book}
W.~Scharlau, \emph{Quadratic and {H}ermitian forms}, Grundlehren der
  Mathematischen Wissenschaften [Fundamental Principles of Mathematical
  Sciences], vol. 270, Springer-Verlag, Berlin, 1985.

\bibitem{suresh:Eichler}
V.~Suresh, \emph{Linear relations in {E}ichler orthogonal transformations}, J.\
  Algebra \textbf{168} (1994), no.~3, 804--809.

\bibitem{swan:quadric_hypersurfaces}
R.~G.~Swan, \emph{{$K$}-theory of quadric hypersurfaces}, Ann.\ of Math.\
  (2) \textbf{122} (1985), no.~1, 113--153.

\bibitem{van_geemen:Brauer_K3}
B.~van Geemen, \emph{Some remarks on {B}rauer groups of {$K3$} surfaces},
  Adv.\ Math.\ \textbf{197} (2005), no.~1, 222--247.

\bibitem{voight:characterizing_quaternion}
J.~Voight, \emph{Characterizing quaternion rings over an arbitrary base}, J.\
  Reine Angew.\ Math.\ \textbf{657} (2011), 113--134.

\bibitem{voisin:cubic_fourfolds}
C.~Voisin, \emph{Th\'eor\`eme de {T}orelli pour les cubiques de {${\bf
  P}^5$}}, Invent.\ Math.\ \textbf{86} (1986), no.~3, 577--601.

\bibitem{wall:orthogonal}
C.~T.~C.~Wall, \emph{On the orthogonal groups of unimodular quadratic forms.
  {II}}, J.\ Reine Angew.\ Math. \textbf{213} (1963/1964), 122--136.

\bibitem{wood:binary}
M.~M.~Wood, \emph{Gauss composition over an arbitrary base}, Adv.\
  Math.\ \textbf{226} (2011), no.~2, 1756--1771.

\bibitem{yoshioka:twisted_sheaves}
K.~Yoshioka, \emph{Moduli spaces of twisted sheaves on a projective
  variety}, Moduli spaces and arithmetic geometry, Adv.\ Stud.\ Pure Math.,
  vol.~45, Math.\ Soc.\ Japan, Tokyo, 2006, pp.~1--30.

\end{thebibliography}
\end{document}